\author{%
  \textbf{Artem Agafonov}$^{1,2}$\thanks{Corresponding Author: Artem Agafonov - \texttt{agafonov.ad@phystech.edu}.},
     \textbf{Vladislav Ryspayev}$^{1}$,
    \textbf{Samuel Horváth}$^{1}$, \\
    \textbf{Alexander Gasnikov$^{3,4,2}$,
    Martin Takáč$^{1}$, 
    Slavomir Hanzely$^{1}$}
    \\
    $ \ $
    \\
  $^1$Mohamed bin Zayed University of Artificial Intelligence (MBZUAI), Abu Dhabi, UAE\\
  $^2$Moscow Institute of Physics and Technology (MIPT), Moscow, Russia\\
  $^3$Innopolis University, Innopolis, Russia \\
  $^4$Skoltech, Moscow, Russia \\
  % examples of more authors
}
\newcommand{\mytitle}{Simple Stepsize for Quasi-Newton Methods with Global Convergence Guarantees}
\newcommand{\B}{\mathbf B}
\newcommand{\Bi}{\mathbf H}
\newcommand{\st}{\eta}
\newcommand{\cq}{{\color{blue} \theta}}
\newcommand{\ualph}{\underline{\alpha}}
\newcommand{\oalph}{\overline{\alpha}}
\newcommand{\oQ}{\overline{Q}}
\newcommand{\oL}{\overline{L}}
\newcommand{\oD}{\overline{D}}
\newcommand{\e}{\varepsilon}
\newcommand{\R}{\mathbb{R}}
\newcommand{\eqdef}{\stackrel{\text{def}}{=}}
\newcommand{\cO}{{\cal O}}
\newcommand{\h}{\nabla^2 f}
\newcommand{\g}{\nabla f}
\newcommand{\normM}[2]{{\left \| #1 \right\|}_{#2}}
\newcommand{\normsM}[2]{{\left \| #1 \right\|}_{#2}^2}
\newcommand{\normMd}[2]{{\left \| #1 \right\|}_{#2}^*}
\newcommand{\normsMd}[2]{{\left \| #1 \right\|}_{#2}^{*2}}
\newcommand{\lr}{\left(} % round
\newcommand{\rr}{\right)}
\newcommand{\ls}{\left[} % square
\newcommand{\rs}{\right]}
\newcommand{\lc}{\left\lbrace} % curly
\newcommand{\rc}{\right\rbrace}
\newcommand{\la}{\left\langle} % angle
\newcommand{\ra}{\right\rangle}
\newcommand{\Lsemi}{{L_{\text{semi}}}}
\newcommand{\fopt}{f^*}
\newcommand{\xopt}{x^*}
\DeclareMathOperator{\argmin}{argmin}
\newtheorem{theorem}{Theorem}
\newtheorem{lemma}{Lemma}
\newtheorem{corollary}{Corollary}
\newtheorem{assumption}{Assumption}
\title{\mytitle}
\newcommand{\nl}{\\[2pt]}
\begin{document}

\maketitle

\begin{abstract}
Quasi-Newton methods are widely used for solving convex optimization problems due to their ease of implementation, practical efficiency, and strong local convergence guarantees. However, their global convergence is typically established only under specific line search strategies and the assumption of strong convexity. In this work, we extend the theoretical understanding of Quasi-Newton methods by introducing a simple stepsize schedule that guarantees a global convergence rate of $\mathcal{ O}(1/k)$ for the convex functions. Furthermore, we show that when the inexactness of the Hessian approximation is controlled within a prescribed relative accuracy, the method attains an accelerated convergence rate of $\mathcal{ O}(1/k^2)$  -- matching the best-known rates of both Nesterov’s accelerated gradient method and cubically regularized Newton methods. We validate our theoretical findings through empirical comparisons, demonstrating clear improvements over standard Quasi-Newton baselines. To further enhance robustness, we develop an adaptive variant that adjusts to the function's curvature while retaining the global convergence guarantees of the non-adaptive algorithm.
\end{abstract}

\section{Introduction} \label{sec:intro}

Quasi-Newton (QN) methods are among the most widely used algorithms for solving optimization problems in scientific computing and, in particular, machine learning. A prominent example is L-BFGS \citep{liu1989limited, nocedal1980updating}, a popular Quasi-Newton variant that serves as the default optimizer for logistic regression in the \textit{scikit-learn} library~\citep{scikit-learn}. These methods implement Newton-like steps, enjoying fast empirical convergence and solid theoretical foundations by maintaining the second-order Hessian approximation $\B_x$ (or its inverse $\Bi_x = \B_x^{-1}$). 
For the unconstrained minimization problem of the convex function $f:\R^d \to \R,$
\begin{equation}\label{eq:loss} 
\textstyle
    \min_{x \in \R^d} f(x),
\end{equation}
the generic Quasi-Newton update with stepsize $\eta_k > 0$ takes the form
\begin{equation} \label{eq:quasi_general}
    x_{k+1} \eqdef x_k - \st_k \Bi_k \g(x_k).
\end{equation}
The rich history of Quasi-Newton methods can be traced back to methods DFP~\citep{davidon1959variable,fletcher2000practical}, BFGS~\citep{broyden1970convergence, fletcher1970new, goldfarb1970family, shanno1970conditioning,byrd1987global}, and SR1~\citep{conn1991convergence,khalfan1993theoretical}, which became classics due to their simplicity and practical effectiveness. These approaches build (inverse) Hessian approximations based on \textit{curvature pairs} $(s_k, y_k)$ capturing iterate and gradient differences, $s_k = x_{k} - x_{k-1}, y_k = \nabla f(x_k) - \nabla f(x_{k-1})$. The stepsize is typically chosen to be unitary $\st_k=1$, and this large stepsize is one of the reasons why these classical methods exhibit only local convergence\footnote{Similarly to the classical Newton method, which can also diverge when initialized far from the solution.}. 
% Their established convergence guarantees are were asymptotic and based on the strong convexity assumption. 
Global convergence guarantees of Quasi-Newton methods were usually based on the strong convexity assumption and obtained by incorporating linesearches or trust-region frameworks~\citep{powell1971convergence, dixon1972variable, powell1976some,conn1991convergence,khalfan1993theoretical,byrd1996analysis}, yet the obtained convergence guarantees were asymptotic without explicit rates. In particular, for minimizing smooth convex functions, it has been shown that classical Quasi-Newton methods such as BFGS converge asymptotically~\citep{byrd1987global,powell1972some}.
\nl
Recent advances in Quasi-Newton methods have primarily focused on addressing these key limitations: the lack of explicit convergence rates for local convergence ~\citep{scheinberg2016practical,rodomanov2021greedy,rodomanov2021new,lin2021greedy,jin2022sharpened,jin2023non,ye2023towards}, global convergence~\citep{scheinberg2016practical,ghanbari2018proximal,berahas2022quasi,kamzolov2023cubic,jin2024exact,scieur2024adaptive,jin2025armijo,wang2024global}, and the reliance on the strong convexity assumption~\citep{scheinberg2016practical,ghanbari2018proximal,berahas2022quasi,kamzolov2023cubic,scieur2024adaptive}. 
Despite all of the interest, even nowadays, many classical Quasi-Newton methods still lack non-asymptotic global convergence guarantees. 
Only recently global non-asymptotic convergence guarantees with explicit rates were established for BFGS in the strongly convex setting for specific line search procedures: \citet{jin2024exact} established rates for exact greedy line search and  \citet{jin2025armijo} established rates for Frank-Wolfe-type Armijo rules. 
% However, these results apply only to the strongly convex setting. 
Beyond classical Quasi-Newton methods, it is possible to prove global convergence rate by enhancing the update with cubic regularization, resulting in convergence guarantees in the convex case~\cite{kamzolov2023cubic,scieur2024adaptive,wang2024global}. However, those methods result in implicit update formula requiring additional line search in each iteration, involving matrix inversions (e.g., using the Woodbury identity~\citep{woodbury1949stability, woodbury1950inverting}).
\nl
In this work, we aim to address all these challenges simultaneously -- we aim to guarantee global non-asymptotic convergence guarantees for classical Quasi-Newton methods for non-strongly convex functions. To this end, we propose a simple stepsize schedule for the generic Quasi-Newton update~\eqref{eq:quasi_general} with guaranteed non-asymptotic global convergence in the convex setting. Our schedule is inspired by stepsize strategies developed for Damped Newton methods~\citep{nesterov1994interior, hanzely2022damped, hanzely2024newton}, Cubic Regularized Newton methods~\citep{nesterov2006cubic, nesterov2008accelerating}, and their inexact variants~\citep{ghadimi2017second,agafonov2024inexact,agafonov2023advancing}, as well as Cubic Regularized Quasi-Newton methods~\citep{kamzolov2023cubic,scieur2024adaptive,wang2024global}. 
\vspace{-5pt}
\subsection{Contributions}
\begin{itemize}[leftmargin=10pt,nolistsep,topsep=-5pt,noitemsep]
% \begin{itemize}[leftmargin=10pt,nolistsep]  
    \item \textbf{From cubic regularization to explicit stepsize schedules:}
    We propose a simple stepsize schedule derived from the cubically regularized Quasi-Newton method, which we call \emph{Cubically Enhanced Quasi-Newton} (CEQN) method. We obtain the schedule by carefully selecting the norm of the cubic regularization.
    % By leveraging the $\B$-norm structure of the update, we derive an explicit and adaptive stepsize schedule with theoretical justification. 
    \item \textbf{Global convergence guarantees:} 
    We provide a convergence analysis for general convex functions. Under the assumption that Hessian approximations satisfy a relative inexactness condition,  $(1 - \ualph)\B_x \preceq \nabla^2 f(x) \preceq (1+\oalph)\B_x$ with $0 \leq \ualph \leq 1, ~ 0 \leq \oalph$, we prove the global rate $O \left ( \tfrac{(\ualph + \oalph)D^2}{K} + \tfrac{(1+\oalph)^{3/2}L D^3}{K^2}\right )$. 
    \item \textbf{Adaptiveness:} 
    We introduce an adaptive stepsize variant that automatically adjusts to the local accuracy of the Hessian approximation. The method naturally adapts to the local curvature without requiring stepsizes tuning and achieves $\e$-accuracy in  $O \left( \tfrac{\alpha \oD}{\e} +  \tfrac{(1 + \alpha)^{3/2} L \oD^3 }{\sqrt{\e}} \right)$ iterations, where $\alpha = \max(\ualph, \oalph)$.
    \item \textbf{Verifiable criterion for inexactness.}
    We provide an implementable criterion for controlling Hessian inexactness that guarantees a global convergence rate of $\cO(1/k^2)$
    when the inexactness can be adaptively adjusted. This applies, for example, to Quasi-Newton methods with sampled curvature pairs or to stochastic second-order methods.
    \item \textbf{Experimental comparison.} We demonstrate that CEQN stepsizes, when combined with adaptive schemes for adjusting inexactness levels, consistently outperform standard Quasi-Newton methods and Quasi-Newton updates with fixed cubic regularization—both in terms of iteration count and wall-clock time.
\end{itemize}

\subsection{Notation}
We denote the global minimizer of the objective function $f$~\eqref{eq:loss} by $x_*$. The Euclidean norm is denoted by $\|\cdot\|$.
We will use norms based on a symmetric positive definite matrix $\B \in \R^{d \times d}$ its inverse $\Bi \eqdef \B^{-1}$. For all $x,g \in \R^d,$
\begin{gather*}
    \label{eq:B_norm}
    \normM h \B \eqdef  \la  h,\B h\ra^{1/2} = \la h, \Bi^{-1} h\ra^{1/2} \eqdef \normMd h \Bi,  \quad 
    \normMd g \B \eqdef \la g,\B^{-1} g\ra^{1/2} = \la g,\Bi g\ra^{1/2} \eqdef \normM{g}{\Bi}.
\end{gather*}
We denote Hessian and its inverse approximations at point $x$ as $\B_x$ and $\Bi_x$. If the approximation is evaluated at the point $x_k$, the $k$-th iterate of some algorithm, we write $\B_k \eqdef \B_{x_k}$ and $\Bi_k \eqdef \Bi_{x_k}$. Notably, for updates $x_{k+1} = x_k - \st_k \Bi_k \g(x_k)$ holds $\normM{x_{k+1} - x_k}{\B_k} = \st_k \normMd{\g(x_k)}{\B_k}$.
\nl
We also define Hessian-induced norms
\begin{equation*}\textstyle
    \normM h x \eqdef  \la  h, \nabla^2 f(x) h\ra^{1/2}, \quad
    \normMd g x \eqdef \la g,\nabla^2 f(x)^{-1} g\ra^{1/2}.
\end{equation*}
For iterates $x_k, ~k \geq 0$ we denote $\normM h {x_k} \eqdef \normM h k$, and $\normMd g {x_k} \eqdef \normMd g k$. Given the matrix $\B$, operator norm of a matrix $\mathbf A$ is defined as 
\begin{equation*}\textstyle
    \normM{\mathbf A}{op} \eqdef  \sup_{y \in \R^d} \tfrac{\normMd {\mathbf A y} x}{\normM{y} x}.
\end{equation*}

\section{Regularization Perspective on Quasi-Newton Methods}

In this section, we motivate our stepsizes via a regularization perspective.
Quasi-Newton methods can be seen as an approximation of the classical Newton method update, which at iterate $x$ can be written as the minimizer of the second-order Taylor approximation,
% construct Hessian approximations and operate on second-order Taylor models of the objective. The Newton model is given by:
\begin{equation} 
    \label{eq:newton_approximation}
    Q_f(y;x) \eqdef f(x) + \la\nabla f(x), y - x\ra + \tfrac{1}{2} \la \nabla^2 f(x)(y - x), y - x\ra.
\end{equation}
Since the exact Hessian $\nabla^2 f(x)$ is typically unavailable or expensive to compute, Quasi-Newton methods replace it with a positive-definite approximation $\B_x \approx \nabla^2 f(x)$. This yields an inexact second-order model:
\begin{equation} 
    \label{eq:inexact_newton_approximation}
    \oQ_f(y;x) \eqdef f(x) + \la\nabla f(x), y - x\ra + \tfrac{1}{2} \la \B_x (y - x), y - x\ra,
\end{equation}
which is minimized in classical Quasi-Newton methods, leading to the update~\eqref{eq:quasi_general} with $\eta_k = 1$.
\nl
Models~\eqref{eq:newton_approximation} and~\eqref{eq:inexact_newton_approximation} serve as local approximations of the objective function. Their accuracy can be quantified in terms of the smoothness of the Hessian and the quality of the Hessian approximation. If the Hessian of $f$ is $L_2$-Lipschitz continuous (i.e., $\|\nabla^2 f(x) - \nabla^2 f(y)\| \leq L_2 \|x - y\|$ for all $x, y \in \R^2$), then inexactness of Newton model can be bound as \citep{nesterov2006cubic}:
\begin{equation*}
    |f(x) - Q_f(y;x)| \leq \tfrac{L_2}{6}\|x-y\|^3, \quad \forall x, y\in\R^d.
\end{equation*}
Bounding the inexactness of the Quasi-Newton model requires an additional assumption on the quality of the Hessian approximation $\|\B_x - \nabla^2 f(x)\| \leq \delta$. Then it holds \citep{agafonov2024inexact}
\begin{equation*}
    |f(x) - \oQ_f(y;x)| \leq \tfrac{L_2}{6}\|x-y\|^3 + \delta\|x-y\|^2, \quad \forall x, y\in\R^d
\end{equation*}
These bounds demonstrate that the Taylor models are accurate in a neighborhood of $x$ as long as the curvature of function is smooth and the Hessian approximation $\B_x$ remains close to the true Hessian.
\nl
Unfortunately, this guarantees the convergence only locally. In fact, both Newton's method and Quasi-Newton methods can diverge if initialized far from the solution 
% \todoI{is this true for QN methods? (if yes, provide citations )}
\citep{jarre2016simple,mascarenhas2007divergence}. This is because these models~\eqref{eq:newton_approximation} and~\eqref{eq:inexact_newton_approximation} do not provide a global upper bound on the function $f$, and may significantly underestimate it far from the current iterate.
\nl
One way to ensure the global convergence in (Quasi-)Newton methods is to introduce a stepsize schedule~$\st_k$ into the update. This modification can be naturally interpreted through the lens of regularization. In particular, the Quasi-Newton update~\eqref{eq:quasi_general} can be rewritten as
\begin{align*}
    x_{k+1}
    &=x_k-\st_k {\Bi_k} \g(x_k) = \argmin_{x \in \R^d} \lc f(x_k) +\la \g(x_k), x-x_k \ra + \tfrac 1 {2\st_k} \normsM{x-x_k} {\B_k}\rc \label{eq:step}.
\end{align*}
This viewpoint also highlights a key geometric property: if the stepsize $\st_k$ and norm $\normM \cdot {\B_k}$ are affine-invariant\footnote{
    Affine-invariance is invariance to affine transformations $f \to A \circ f$ for any linear operator $A$.
}, then the Quasi-Newton method itself is affine-invariant, which aligns with the common knowledge \citep{lyness1979affine}. Affine-invariance property is practically significant, as it implies invariance to scaling and choice of the coordinate system, facilitating the implementation of the algorithm. Preserving it throughout the proofs requires careful technical analysis. 
% and it significantly reduces the complexity of hyperparameter tuning. 
\nl
Another globalization strategy for the approximations~\eqref{eq:newton_approximation} and~\eqref{eq:inexact_newton_approximation} is to enhance them with a cubic regularization term~\cite{nesterov2006cubic, ghadimi2017second}. The Cubic Regularized Newton step takes the form
\begin{equation*}
    x_{k+1} = \argmin_{x \in \R^d} \left\{ Q_f(x; x_k) + \tfrac{L_2}{3} \|x - x_k\|^3 \right\}.
\end{equation*}
For functions with $L_2$-Lipschitz Hessian, the cubic model provides a global upper bound: $f(y) \leq Q_f(y; x) + \tfrac{L}{3}\|y-x\|^3 $ for any $x, y \in \R^d$. In the case of inexact Hessians, additional regularization is required to restore the upper-bounding property~\cite{agafonov2024inexact}. Specifically, the cubic regularized step becomes
\begin{equation*}
    x_{k+1} = \argmin_{x \in \R^d} \left\{ \oQ_f(x; x_k) + \tfrac{\delta}{2} \|x - x_k\|^2 + \tfrac{L_2}{3} \|x - x_k\|^3\right\}
\end{equation*}
under which the objective is bounded above as $f(y) \leq \oQ_f(y; x) +\tfrac{\delta}{2} \|y - x\|^2 + \tfrac{L}{3}\|y-x\|^3$  for any $x, y \in \R^d$~\cite{agafonov2024inexact}. 
\nl
While cubic regularization enables global convergence guarantees, we highlight two limitations of the approach. First, the resulting step can be equivalently written as $x_{k+1} = x_k - (\nabla^2f(x_k) + \lambda_k I)^{-1}\nabla f(x_k)$, with implicit $\lambda_k = L_2\|x_k - x_{k+1}\|$. Since $\lambda_k$ depends on the unknown next iterate $x_{k+1}$, it requires using an additional subroutine for solving the subproblem each iteration~\citep{nesterov2021implementable}. Secondly, usage of the non-affine-invariant Euclidean norm removes the desired affine-invariant property.
\nl
To address the loss of affine-invariance problems, \citet{hanzely2022damped} adjusted the geometry of cubic regularization from Euclidean norm to local norm $\normM \cdot x ^3$; matching the norm of quadratic term of Taylor polynomial. This resulted in the update preserving Newton direction with an adjusted stepsize.

\subsection{Cubically-Enhanced Quasi-Newton}
Leveraging these ideas, we propose a regularization strategy for Quasi-Newton methods that aligns quadratic and cubic terms in the same geometry using norms $\normM \cdot {\B_k}$. This preserves the update direction of the classical Quasi-Newton methods, enhanced with a stepsize reflecting both curvature and model accuracy, hence we call it \emph{Cubically-Enhanced Quasi-Newton} (CEQN). Notably, it enjoys the structure of Quasi-Newton steps and global convergence guarantees of cubic regularized methods. 
\nl
Let us formalize the mentioned claims. CEQN method minimizes the regularized model,
\begin{equation} \label{eq:step_model}
    x_{k+1} \eqdef \argmin_{y \in \R^d} \lc f(x_k) + \la \nabla f(x_k), y - x_k \ra + \tfrac{\theta}{2} \normM{y - x_k}{\B_k}^2 + \tfrac{L}{3}  \normM{y - x_k}{\B_k}^3 \rc,
\end{equation}
which we simplify using notation $h_k \eqdef x_{k+1} - x_k$. The first-order optimality condition yields
\begin{equation*}
    0 = \nabla f(x_k) + \theta \B_k h_k + L \|h_k\|_{\B_k} \B_k h_k,
\end{equation*}
which we multiply by $\B_k^{-1}$ and rearrange, obtaining
\begin{equation*}
    h_k = -\left( \theta + L \|h_k\|_{\B_k} \right)^{-1} \B_k^{-1} \nabla f(x_k).
\end{equation*}
This shows that the update direction matches classical Quasi-Newton methods; with an stepsize
\begin{equation*}
    \eta_k \eqdef \left( \theta + L \|h_k\|_{\B_k} \right)^{-1}.
\end{equation*}
Substituting back $h_k=-\eta \B_k^{-1}\g(x_k)$ and $\|h_k\|_{\B_k} = \eta_k \| \nabla f(x_k) \|_{\Bi_k}$ simplifies the equation to $0
=\lr 1-\theta \eta_k + \eta_k^2 L \|\nabla f(x_k)\|^*_{\B_k} \rr \g(x_k)$ and solving the quadratic equation in $\eta_k$ gives the closed-form expression:
\begin{equation} \label{eq:step_quasi}
    \eta_k = \tfrac{2}{\theta + \sqrt{\theta^2 + L \|\nabla f(x_k)\|^*_{\B_k}}}.
\end{equation}
Hence, the minimizer of~\eqref{eq:step_model} is algebraically identical to the classical Quasi-Newton update~\eqref{eq:quasi_general} with stepsize \eqref{eq:step_quasi}.
In the special case of an exact Hessian approximation and $\theta = 1$, this method reduces to Affine-Invariant Cubic Newton method of \citet{hanzely2022damped}.
\nl
Quasi-Newton methods are considered to be inexact approximations of the Newton method. This result provides alternative interpretation, as exact minimizers of the Newton method in an adjusted geometry. To obtain convergence rates we need to bound the difference between those geometries.
\nl
Before presenting convergence rate guarantees, let us formally list CEQN as an Algorithm~\ref{alg:main}. We note that if the parameters $L$ and $\theta$ are chosen such that the initial stepsize $\eta_0$ from~\eqref{eq:step_quasi} matches the best fine-tuned constant learning rate of a given Quasi-Newton method, then enhancing it with CEQN stepsizes can lead to faster convergence. This is because the $\B$-norm of the gradient naturally decreases as the method approaches the solution.
\begin{algorithm} 
    \caption{ Cubically Enhanced Quasi-Newton Method} 
    \label{alg:main}
    \begin{algorithmic}[1]
        \STATE \textbf{Requires:} Initial point $x_0 \in \R^d$, constants $L, \theta > 0$.
        \FOR {$k=0,1,\ldots, K$}
            \STATE 
                    $\eta_k = \tfrac{2}{\theta + \sqrt{\theta^2 + L\normM{\nabla f(x_k)}{\Bi_k}}}$
            \STATE  $
                     x_{k+1} = x_k - \eta_k\Bi_k\nabla f(x_k)$
        \ENDFOR
        \STATE \textbf{Return:} $x_{K+1}$
    \end{algorithmic}
\end{algorithm}

\section{Convergence Results}
As we mentioned before, CEQN is affine-invariant. If we aim to obtain affine-invariant convergence guarantees, we have to base our analysis on affine-invariant smoothness assumption. 
Throughout this work we consider the class of semi-strongly self-concordant functions introduced in~\citet{hanzely2022damped}. This class is an affine-invariant version of second-order smoothness, and is positioned between standard self-concordance and strong self-concordance of \citet{rodomanov2021greedy},
\begin{equation*}
    \text{strong self-concordance} \subseteq \text{semi-strong self-concordance} \subseteq \text{self-concordance.}
\end{equation*}
\begin{assumption} \label{as:semi-self-concordance}
    Convex function $f \in C^2$  is called \textit{semi-strongly self-concordant} if
    \begin{equation}
    \label{eq:semi-strong-self-concordance}
    \left\|\nabla^2 f(y)-\nabla^2 f(x)\right\|_{op} \leq \Lsemi \|y-x\|_{x} , \quad \forall y,x \in \R^d.
    \end{equation} 
\end{assumption}
Semi-strong self-concordance yields explicit second-order approximation bounds on both the function and its gradient~\citep{hanzely2022damped}, for all $ x,y\in \R^d$ , we have:
\begin{gather*}
     \left|f(y) - Q_f(y;x) \right|  \leq \tfrac{\Lsemi}{6}\|y-x\|_x^3, \quad 
    \left\|\nabla f(y) - \nabla f(x) - \nabla^2 f(x)(y-x) \right\|^*_x
    \leq \tfrac{\Lsemi}{2} \|y-x\|_x^2.%, \quad \forall x,y\in \R^d.
\end{gather*}
We now introduce a relative inexactness condition that quantifies how closely the approximate Hessian $\B_x$ tracks the true Hessian $\nabla^2 f(x)$.
\begin{assumption} \label{as:inexactness}
    For a function $f(x)$ and point $x \in \mathbb{R}^d$, a positive definite matrix $B_x \in \mathbb{R}^{d \times d}$ is considered a $(\ualph, \oalph)$-relative inexact Hessian with $0 \leq \ualph \leq 1, ~ 0 \leq \oalph$ if it satisfies the inequality
    \begin{equation}
        \label{eq:inexactness}
            (1-\ualph) \B_x \preceq \h(x) \preceq (1+\oalph) \B_x. 
        \end{equation}
\end{assumption}
% This assumption is satisfied by a variety of classical Quasi-Newton approximations, such as regularized L-BFGS or damped BFGS updates, when applied to functions with smooth curvature. We refer to~\cite{berahas2022quasi, rodomanov2021greedy, hanzely2022damped} for constructions that maintain such relative bounds either deterministically or via adaptive safeguards.
Combining Assumptions~\ref{as:semi-self-concordance} and~\ref{as:inexactness}, we obtain the following estimates comparing the function $f(y)$, its gradient $\nabla f(y)$, and their inexact second-order model $\oQ(y;x)$~\eqref{eq:inexact_newton_approximation}
\begin{lemma}
\label{lem:model_bounds}
    Let Assumptions~\ref{as:semi-self-concordance} and~\ref{as:inexactness} hold. Then, for any \(x, y \in \R^d\), the following inequalities hold:
    \begin{align}
        f(y) - \oQ_f(y;x) 
        &\leq \tfrac{\oalph}{2} \|y - x\|_{\B_x}^2 
             + \tfrac{(1 + \oalph)^{3/2} L_{\mathrm{semi}}}{6} \|y - x\|_{\B_x}^3, 
             \label{eq:model_difference_up} \\
        \oQ_f(y;x) - f(y) 
        &\leq \tfrac{\ualph}{2} \|y - x\|_{\B_x}^2 
             + \tfrac{(1 + \oalph)^{3/2} L_{\mathrm{semi}}}{6} \|y - x\|_{\B_x}^3, 
             \label{eq:model_difference_low} \\
        \|\nabla \oQ_f(y;x) - \nabla f(y)\|^*_{\B_x} 
        &\leq \alpha_{\max} \|y - x\|_{\B_x} 
             + \tfrac{(1 + \oalph)^{3/2} L_{\mathrm{semi}}}{2} \|y - x\|_{\B_x}^2,
             \label{eq:model_grad_bound}
    \end{align}
    where $\alpha_{\max} := \max(\ualph, \oalph)$.
\end{lemma}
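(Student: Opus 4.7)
The plan is to derive all three bounds by decomposing the error between $f$ (or $\nabla f$) and the inexact model $\oQ_f$ into two pieces: (i) the discrepancy between the inexact model $\oQ_f(y;x)$ and the exact Taylor model $Q_f(y;x)$, which is controlled purely by the Hessian approximation (Assumption \ref{as:inexactness}), and (ii) the discrepancy between the exact Taylor model and the true function, which is controlled by semi-strong self-concordance (Assumption \ref{as:semi-self-concordance}). Throughout, the only non-cosmetic work is translating bounds stated in the Hessian norm $\|\cdot\|_x$ into bounds in the approximation norm $\|\cdot\|_{\B_x}$.

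For the two function-value bounds, I write
\begin{equation*}
f(y)-\oQ_f(y;x) \;=\; \bigl[f(y)-Q_f(y;x)\bigr] \;+\; \tfrac{1}{2}\bigl\langle (\nabla^2 f(x)-\B_x)(y-x),\,y-x\bigr\rangle.
\end{equation*}
From Assumption \ref{as:inexactness}, $-\oalph \B_x \preceq \nabla^2 f(x)-\B_x \preceq \ualph\B_x$, so the quadratic-form piece lies in the interval $[-\tfrac{\oalph}{2}\|y-x\|_{\B_x}^2,\ \tfrac{\ualph}{2}\|y-x\|_{\B_x}^2]$. For the Taylor-error piece I invoke the semi-strong self-concordance bound $|f(y)-Q_f(y;x)|\le \tfrac{\Lsemi}{6}\|y-x\|_x^3$, and then convert norms using $\nabla^2 f(x)\preceq(1+\oalph)\B_x$, which yields $\|y-x\|_x^3 \le (1+\oalph)^{3/2}\|y-x\|_{\B_x}^3$. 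Taking upper and lower signs of both pieces separately produces \eqref{eq:model_difference_up} and \eqref{eq:model_difference_low}. Note that the sign asymmetry in the quadratic term accounts for the different coefficients $\oalph$ and $\ualph$ on the two inequalities, while the cubic term inherits the symmetric $(1+\oalph)^{3/2}$ factor.

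For the gradient bound, I similarly split
\begin{equation*}
\nabla\oQ_f(y;x)-\nabla f(y) \;=\; (\B_x-\nabla^2 f(x))(y-x) \;+\; \bigl[\nabla Q_f(y;x)-\nabla f(y)\bigr]
\end{equation*}
and apply the triangle inequality in $\|\cdot\|_{\B_x}^*$. The first term is handled by observing that $-\oalph\B_x \preceq \B_x-\nabla^2 f(x)\preceq \ualph\B_x$ implies the $\B_x$-operator norm of $\B_x-\nabla^2 f(x)$ is at most $\alpha_{\max}=\max(\ualph,\oalph)$, so this term is bounded by $\alpha_{\max}\|y-x\|_{\B_x}$. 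For the second term, semi-strong self-concordance gives $\|\nabla Q_f(y;x)-\nabla f(y)\|_x^* \le \tfrac{\Lsemi}{2}\|y-x\|_x^2$.

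The one step that requires a bit of care is converting the dual norm $\|\cdot\|_x^*$ into $\|\cdot\|_{\B_x}^*$. Using $\nabla^2 f(x)\preceq (1+\oalph)\B_x$ and inverting gives $\B_x^{-1} \preceq (1+\oalph)\,\nabla^2 f(x)^{-1}$, hence $\|g\|_{\B_x}^* \le \sqrt{1+\oalph}\,\|g\|_x^*$. Combining this with the primal conversion $\|y-x\|_x^2 \le (1+\oalph)\|y-x\|_{\B_x}^2$ gives an overall factor $(1+\oalph)^{3/2}$, yielding \eqref{eq:model_grad_bound}. This norm-conversion step, and making sure I track which direction of the Löwner order I need when inverting, is the only place where a mistake could easily slip in; the rest is triangle inequality and quadratic-form manipulation.
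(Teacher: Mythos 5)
Your decomposition and norm conversions are exactly the paper's proof: split $f - \oQ_f$ into the Taylor error (controlled by semi-strong self-concordance) plus the quadratic form of $\h(x)-\B_x$ (controlled by the inexactness condition), convert $\|\cdot\|_x$ to $\|\cdot\|_{\B_x}$ via $(1+\oalph)^{1/2}$ factors, and bound $(\h(x)-\B_x)(y-x)$ in the dual norm by $\alpha_{\max}\|y-x\|_{\B_x}$ (your operator-norm phrasing is the same computation the paper does explicitly with $u=\B_x^{1/2}(y-x)$). Your explicit accounting of the $(1+\oalph)^{3/2}$ factor in the gradient bound as $\sqrt{1+\oalph}$ (dual-norm conversion) times $(1+\oalph)$ (primal squared norm) is correct and actually spelled out more carefully than in the paper.

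One concrete slip, of precisely the kind you warned yourself about: from $(1-\ualph)\B_x \preceq \h(x) \preceq (1+\oalph)\B_x$ one gets $-\ualph\B_x \preceq \h(x)-\B_x \preceq \oalph\B_x$, not $-\oalph\B_x \preceq \h(x)-\B_x \preceq \ualph\B_x$ as you wrote. With your ordering the quadratic-form piece would be bounded above by $\tfrac{\ualph}{2}\|y-x\|_{\B_x}^2$ and below by $-\tfrac{\oalph}{2}\|y-x\|_{\B_x}^2$, which assigns $\ualph$ to \eqref{eq:model_difference_up} and $\oalph$ to \eqref{eq:model_difference_low} --- the opposite of the lemma. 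The fix is just to correct the Löwner ordering; the gradient bound is unaffected since $\alpha_{\max}$ absorbs both constants.
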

\begin{theorem}
    Let Assumptions~\ref{as:semi-self-concordance},~\ref{as:inexactness} hold, $f$ be a convex function, and 
    \begin{equation*} \label{eq:distance}
    	D \eqdef \max\limits_{k \in [0;K+1]} \|x_k - x_{\ast}\|_{\B_k}.
	\end{equation*}
    After $K+1$ iterations of Algorithm~\ref{alg:main} with parameters
    \begin{equation*}\label{eq:hyperparams}
        \theta \geq  1 + \oalph,~L \geq \tfrac{(1+\oalph)^{3/2}\Lsemi}{2},
    \end{equation*}
    we get the following bound
    \begin{equation*}
        f(x_{K+1}) - f(x_\ast) \leq \tfrac{(\ualph + \oalph)}{2}\tfrac{9D^2}{K+3} + (1+\oalph)^{3/2}\tfrac{3\Lsemi D^3}{(K+1)(K+2)}.
    \end{equation*}
\end{theorem}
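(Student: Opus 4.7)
The plan is to extract a per-iteration recursion $\Phi_{k+1} \le (1-t)\Phi_k + \tfrac{(\ualph+\oalph)D^2}{2}t^2 + \tfrac{(1+\oalph)^{3/2}\Lsemi D^3}{3}t^3$ valid for every $t\in[0,1]$, where $\Phi_k := f(x_k)-f(x_*)$, and then telescope it with weights that grow cubically in $k$.

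Denote the algorithm's regularized model by $M_k(y) := f(x_k) + \la \g(x_k), y-x_k\ra + \tfrac{\theta}{2}\normsM{y-x_k}{\B_k} + \tfrac{L}{3}\normM{y-x_k}{\B_k}^3$. Using the identity $\oQ_f(y;x_k) = f(x_k) + \la \g(x_k), y-x_k\ra + \tfrac{1}{2}\normsM{y-x_k}{\B_k}$, the upper bound~\eqref{eq:model_difference_up} rewrites as $f(y) \le f(x_k) + \la \g(x_k), y-x_k\ra + \tfrac{1+\oalph}{2}\normsM{y-x_k}{\B_k} + \tfrac{(1+\oalph)^{3/2}\Lsemi}{6}\normM{y-x_k}{\B_k}^3$, so the hypotheses $\theta \ge 1+\oalph$, $L \ge \tfrac{(1+\oalph)^{3/2}\Lsemi}{2}$ are exactly what makes $M_k \ge f$ pointwise. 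Since $x_{k+1}$ is the global minimizer of $M_k$, this yields $f(x_{k+1}) \le M_k(x_{k+1}) \le M_k(y)$ for every $y \in \R^d$.

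To extract the recursion, I would analogously rewrite~\eqref{eq:model_difference_low} as $f(x_k) + \la \g(x_k), y-x_k\ra \le f(y) + \tfrac{\ualph-1}{2}\normsM{y-x_k}{\B_k} + \tfrac{(1+\oalph)^{3/2}\Lsemi}{6}\normM{y-x_k}{\B_k}^3$ and substitute into the $M_k(y)$ bound. At the boundary choices $\theta = 1+\oalph$ and $L = \tfrac{(1+\oalph)^{3/2}\Lsemi}{2}$, the constants collapse cleanly: the quadratic coefficient becomes $\tfrac{\theta-1+\ualph}{2} = \tfrac{\ualph+\oalph}{2}$ and the cubic one becomes $\tfrac{(1+\oalph)^{3/2}\Lsemi}{3}$. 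Setting $y = (1-t)x_k + t x_*$, applying convexity $f(y) \le (1-t)f(x_k) + t f(x_*)$ together with $\normM{y-x_k}{\B_k} \le tD$, and subtracting $f(x_*)$ produces the target recursion. Larger $\theta$ or $L$ only enlarge the prefactors, so the stated bound is the tightest one reachable by this scheme.

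The final step is weighted telescoping: pick $t_k = 3/(k+3)$ and the cubic weights $A_k = k(k+1)(k+2)$, so that $A_{k+1}(1-t_k) = A_k$ with $A_0 = 0$. Multiplying the recursion by $A_{k+1}$ and summing from $k=0$ to $K$ collapses the left side to $A_{K+1}\Phi_{K+1}$. Bounding $A_{k+1}t_k^2 = 9(k+1)(k+2)/(k+3) \le 9(k+1)$ gives a quadratic-term sum of order $K^2$, and $A_{k+1}t_k^3 = 27(k+1)(k+2)/(k+3)^2 \le 27$ gives a cubic-term sum of order $K$; dividing by $A_{K+1} = \Theta(K^3)$ produces the $\cO(D^2/K) + \cO(D^3/K^2)$ bound in the theorem. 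The main obstacle is extracting the clean $(\ualph+\oalph)/2$ quadratic coefficient: a naive use of~\eqref{eq:model_difference_up} at $x_{k+1}$ followed by convexity would produce $(\ualph+1+\oalph)/2$ instead, because the Taylor expansion $\oQ_f$ already carries a quadratic coefficient $1$. The key is to first establish $f(x_{k+1}) \le M_k(y)$ via global majorization and the minimizer property, and only then route $M_k(y)$ back to $f(y)$ via~\eqref{eq:model_difference_low}; the $-1$ from that Taylor inversion exactly cancels the $\theta-1$ contribution from $M_k$ at the boundary value $\theta = 1+\oalph$.
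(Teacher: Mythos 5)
Your proposal is correct and follows essentially the same route as the paper: majorize $f$ by the regularized model $M_k$ via \eqref{eq:model_difference_up} and the parameter choices, pass from $\min_y M_k(y)$ back to $f(y)$ via \eqref{eq:model_difference_low}, restrict to the segment $y=(1-t)x_k+t x_*$ with $\|y-x_k\|_{\B_k}\le tD$, and telescope with $\gamma_k=3/(k+3)$ and cubically growing weights (your $A_k$ are, up to a shift, the reciprocals of the paper's). The only discrepancy is in the absolute constant of the cubic term (your accounting yields roughly $9/((K+2)(K+3))$ rather than the stated $3/((K+1)(K+2))$), which does not affect the $\cO(D^2/K)+\cO(D^3/K^2)$ conclusion.
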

This result provides an explicit upper bound on the objective residual after $K+1$ iterations of Algorithm~\ref{alg:main}. The second term on the right-hand side matches the convergence rate of the Cubic Regularized Newton method~\cite{nesterov2008accelerating} and accelerated gradient descent~\cite{nesterov1983method}, and reflects the ideal behavior under exact second-order information. The first term accounts for the effect of Hessian inexactness and aligns with the standard convergence rate of gradient descent. However, when the inexactness can be explicitly controlled -- e.g., by increasing the batch size in stochastic settings or refining the approximation scheme -- the convergence rate can closely match that of the exact Cubic Regularized method. To formalize the conditions required to achieve this rate and provide further insight into the performance of CEQN, we introduce the following lemma.
\begin{lemma}
    \label{lm:main_step_decrease}
    Let Assumptions~\ref{as:semi-self-concordance},~\ref{as:inexactness} hold and $f(x)$ be a convex function. Algorithm~\ref{alg:main} with parameters $\theta =  1 + \alpha \geq 1 + \alpha_\text{max}, ~ L \geq (1+\oalph)^{3/2}\Lsemi$ implies the following one-step decrease  
    \begin{equation}
        \label{eq:main_step_decrease}
        f(x_{k}) - f(x_{k+1})
        \geq 
         \min \left\{ \left( \tfrac{1}{4\alpha }\right)\lr\|\nabla f(x_{k+1})\|^*_{\B_k}\rr^2, 
        \left( \tfrac{1}{6L}\right)^\frac{1}{2} \lr\|\nabla f(x_{k+1})\|^*_{{\B_k}}\rr^{3/2} \right \} \geq 0.
    \end{equation}
\end{lemma}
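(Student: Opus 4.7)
Write $t := \|h_k\|_{\B_k}$ and $g := \|\nabla f(x_{k+1})\|^*_{\B_k}$. The plan is to derive (i) a lower bound on the one-step decrease of the form $f(x_k)-f(x_{k+1}) \geq A\, t^2 + B\, t^3$, (ii) an upper bound $g \leq C\, t + D\, t^2$, and then (iii) combine them via a two-case dichotomy depending on which summand on the right-hand side of (ii) dominates.

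For step (i), I would exploit that $x_{k+1}$ is the global minimizer of the cubic model $M(y)$ from~\eqref{eq:step_model}. Its first-order optimality condition gives $\langle \nabla f(x_k), h_k\rangle = -\theta\, t^2 - L\, t^3$, whence $M(x_{k+1}) = f(x_k) - \tfrac{\theta}{2}\, t^2 - \tfrac{2L}{3}\, t^3$. The cubic upper bound~\eqref{eq:model_difference_up} of Lemma~\ref{lem:model_bounds} applied at $y=x_{k+1}$, $x=x_k$ reads $f(x_{k+1}) \leq \oQ_f(x_{k+1};x_k) + \tfrac{\oalph}{2}\, t^2 + \tfrac{(1+\oalph)^{3/2}\Lsemi}{6}\, t^3$. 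Substituting the identity $\oQ_f(x_{k+1};x_k) = M(x_{k+1}) - \tfrac{\theta-1}{2}\, t^2 - \tfrac{L}{3}\, t^3$ and using $\theta = 1+\alpha \geq 1 + \alpha_{\max}$ together with $L \geq (1+\oalph)^{3/2}\Lsemi$ makes both Hessian-inexactness corrections non-positive, producing the clean decrease $f(x_k)-f(x_{k+1}) \geq \tfrac{1+\alpha}{2}\, t^2 + \tfrac{5L}{6}\, t^3$.

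For step (ii), apply~\eqref{eq:model_grad_bound} at $y=x_{k+1}$, $x=x_k$ to control $\|\nabla f(x_{k+1}) - \nabla f(x_k) - \B_k h_k\|^*_{\B_k}$. The same first-order condition rewrites $\nabla f(x_k) + \B_k h_k$ as $-(\alpha + L t)\B_k h_k$, whose dual $\B_k$-norm equals $(\alpha + L t)\, t$. A single triangle inequality, combined with the assumed bounds $\alpha_{\max} \leq \alpha$ and $(1+\oalph)^{3/2}\Lsemi \leq L$, then yields $g \leq 2\alpha\, t + \tfrac{3L}{2}\, t^2$.

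For step (iii), this bound on $g$ forces at least one of its two summands to carry at least half of $g$. In the first regime one gets $t \geq g/(4\alpha)$, so the quadratic term in (i) produces a decrease of order $g^2/\alpha$; in the second regime $t \geq \sqrt{g/(3L)}$, so the cubic term in (i) produces a decrease of order $g^{3/2}/\sqrt{L}$. Taking the minimum over the two cases yields the claim, and the trailing $\geq 0$ is immediate since both arguments of the minimum are non-negative. The main obstacle I anticipate is matching the precise constants $\tfrac{1}{4\alpha}$ and $\tfrac{1}{\sqrt{6L}}$ stated in the lemma: this requires carefully choosing how the slack in the quadratic prefactor of (i) (i.e., whether to lower-bound $\tfrac{1+\alpha}{2}\, t^2$ by $\tfrac{1}{2}\, t^2$ or by $\tfrac{\alpha}{2}\, t^2$) is apportioned across the two branches of the dichotomy, and possibly sharpening the triangle inequality in (ii) by exploiting that $\alpha_{\max}$ may be strictly smaller than $\alpha$.
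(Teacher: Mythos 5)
Your plan is sound and would prove a version of the lemma, but it follows a genuinely different route from the paper's, and as you yourself suspect, it cannot recover the stated constants $\tfrac{1}{4\alpha}$ and $\tfrac{1}{\sqrt{6L}}$. Your steps (i) and (ii) are correct: the model-decrease bound $f(x_k)-f(x_{k+1}) \geq \tfrac{1+\alpha}{2}t^2 + \tfrac{5L}{6}t^3$ and the conversion $g \leq 2\alpha t + \tfrac{3L}{2}t^2$ both check out. The problem is step (iii). Routing everything through $t$ is lossy in both directions: in the quadratic regime the best you can extract is $t \geq \tfrac{g}{4\alpha}$ (or $t\geq \tfrac{g}{2(\alpha+\alpha_{\max})}$ with a sharper split), which combined with $\tfrac{1+\alpha}{2}t^2$ gives roughly $\tfrac{(1+\alpha)g^2}{32\alpha^2}$ --- for large $\alpha$ this is a factor of about $8$ short of $\tfrac{g^2}{4\alpha}$, and no reapportioning of the slack in (i) fixes this, since the deficit comes from squaring the lower bound on $t$. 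The cubic regime similarly lands near $\tfrac{5}{18\sqrt{3}}\,g^{3/2}L^{-1/2} \approx 0.16\,g^{3/2}L^{-1/2}$ versus the required $\tfrac{1}{\sqrt{6}}\approx 0.41$.

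The paper avoids this by never converting $g$ into $t$ and back. It writes the stationarity condition as $\nabla \oQ_f(x_{k+1};x_k) = -\zeta_k \B_k h_k$ with $\zeta_k = \alpha + L\|h_k\|_{\B_k}$, and then expands the \emph{square} of the gradient-error bound~\eqref{eq:model_grad_bound}:
\begin{equation*}
\bigl(\alpha_{\max}+\tfrac{L}{2}\|h_k\|_{\B_k}\bigr)^2\|h_k\|_{\B_k}^2 \;\geq\; \|\zeta_k\B_k h_k + \nabla f(x_{k+1})\|_{\B_k}^{*2} \;=\; \zeta_k^2\|h_k\|_{\B_k}^2 + \|\nabla f(x_{k+1})\|_{\B_k}^{*2} + 2\zeta_k\langle\nabla f(x_{k+1}),h_k\rangle.
\end{equation*}
The cross term is exactly the inner product $\langle\nabla f(x_{k+1}),x_k-x_{k+1}\rangle$ that convexity needs, so one obtains it with no squaring loss. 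The case split is then on which term of $\zeta_k$ dominates ($\alpha \gtrless L\|h_k\|_{\B_k}$), not on which summand of the bound on $g$ dominates: when $\alpha \geq L\|h_k\|_{\B_k}$ the quadratic terms cancel and $\langle\nabla f(x_{k+1}),x_k-x_{k+1}\rangle \geq \tfrac{1}{2\zeta_k}\|\nabla f(x_{k+1})\|_{\B_k}^{*2} \geq \tfrac{1}{4\alpha}\|\nabla f(x_{k+1})\|_{\B_k}^{*2}$; otherwise a residual $\tfrac{3L}{16}\|h_k\|_{\B_k}^3$ survives and the inequality $\tfrac{a}{r}+\tfrac{\beta r^3}{3}\geq\tfrac43\beta^{1/4}a^{3/4}$ yields exactly $(6L)^{-1/2}\|\nabla f(x_{k+1})\|_{\B_k}^{*3/2}$. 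Convexity, $f(x_k)-f(x_{k+1})\geq\langle\nabla f(x_{k+1}),x_k-x_{k+1}\rangle$, finishes. So to close your gap, replace step (iii) (and the use of the function-value bound in step (i)) with this direct expansion of the squared dual norm around the optimality condition.
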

An immediate corollary of this lemma is that Algorithm~\ref{alg:main} generates a monotonically non-increasing sequence of function values, with a strict decrease whenever $\nabla f(x_{k+1}) \neq 0$. The lemma also implies that CEQN transitions only once between two convergence regimes, determined by which term in the minimum on the right-hand side of~\eqref{eq:main_step_decrease} is active. As a result, CEQN initially benefits from the faster convergence rate characteristic of the Cubic Regularized Newton method.
\begin{corollary}
    \label{cor:convergence_neighbourhood}
     Let Assumptions~\ref{as:semi-self-concordance},~\ref{as:inexactness} hold and $f$ be a convex function. Algorithm~\ref{alg:main} with parameters $\theta =  1 + \alpha \geq 1 + \alpha_\text{max}, ~ L \geq (1+\oalph)^{3/2}\Lsemi$ converges with the rate $\cO(k^{-2})$ until it reaches the region $\|\nabla f(x_{k+1})\|^*_{\B_k} \leq \tfrac{4\alpha^2}{9 L^2(1+\alpha)^{3}}$.
\end{corollary}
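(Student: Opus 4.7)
The plan is to derive the corollary as an almost immediate consequence of Lemma~\ref{lm:main_step_decrease} combined with convexity. Denote $g_{k+1} \eqdef \normMd{\nabla f(x_{k+1})}{\B_k}$ and $\delta_k \eqdef f(x_k) - f(x_\ast)$; the right-hand side of~\eqref{eq:main_step_decrease} is a minimum of two terms, and the accelerated $\cO(k^{-2})$ rate is driven solely by the $3/2$-power term $(6L)^{-1/2} g_{k+1}^{3/2}$, which mirrors the one-step decrease of the Cubic Regularized Newton method.

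The first step is to identify the regime in which the $3/2$-power term is the active minimum. Solving the elementary inequality $(6L)^{-1/2} g_{k+1}^{3/2} \leq (4\alpha)^{-1} g_{k+1}^{2}$ yields a threshold of the form $g_{k+1} \geq C\,\alpha^2/L$ for an explicit constant $C$; plugging in the parameter choice $\theta = 1+\alpha$ together with the lower bound $L \geq (1+\oalph)^{3/2}\Lsemi$ and simplifying should recover the concrete expression $\tfrac{4\alpha^2}{9 L^2 (1+\alpha)^{3}}$ stated in the corollary. For every iteration in which $g_{k+1}$ exceeds this threshold, Lemma~\ref{lm:main_step_decrease} collapses to
\begin{equation*}
    \delta_k - \delta_{k+1} \;\geq\; (6L)^{-1/2}\, g_{k+1}^{3/2}.
\end{equation*}

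The second step is to convert this gradient-based decrease into a recursion on $\delta_k$ using convexity. By convexity of $f$ and Cauchy--Schwarz in the $\B_k$-norm,
\begin{equation*}
    \delta_{k+1} \;\leq\; \la \nabla f(x_{k+1}),\, x_{k+1} - x_\ast \ra \;\leq\; g_{k+1}\cdot \|x_{k+1} - x_\ast\|_{\B_k} \;\leq\; g_{k+1}\, D,
\end{equation*}
so that the per-iteration decrease becomes $\delta_k - \delta_{k+1} \geq c\, \delta_{k+1}^{3/2}$ with $c = (\sqrt{6L}\, D^{3/2})^{-1}$. Telescoping this standard cubic-type recurrence---for instance by comparing consecutive values of $u_k \eqdef \delta_k^{-1/2}$ and showing $u_{k+1} - u_k$ is bounded below by a multiple of $c$---yields $\delta_k = \cO(L D^3 / k^2)$, which is exactly the advertised $\cO(k^{-2})$ rate valid so long as we remain in the regime identified in the first step.

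The main obstacle I anticipate is the first step: pinning down the precise numerical constants in the gradient threshold so they match the stated bound, in particular tracing how the parameter choices $\theta = 1+\alpha$ and $L \geq (1+\oalph)^{3/2}\Lsemi$ propagate through the elementary comparison of $(6L)^{-1/2} g^{3/2}$ against $(4\alpha)^{-1} g^{2}$. Once the threshold is in hand, the remaining steps are essentially immediate and follow the well-established analysis of cubically regularized Newton-type methods.
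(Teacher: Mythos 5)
Your proposal follows essentially the same route as the paper: isolate the regime where the $3/2$-power term of Lemma~\ref{lm:main_step_decrease} is the active minimum, convert the gradient-based decrease into a recursion $\delta_k-\delta_{k+1}\geq c\,\delta_{k+1}^{3/2}$ via convexity and Cauchy--Schwarz, and solve that recursion (the paper invokes Lemma~A.1 of \citet{nesterov2019inexact}) to get $\cO(k^{-2})$. One small technical point: in your second step the quantity that appears is $\|x_{k+1}-x_\ast\|_{\B_k}$ --- the \emph{new} iterate measured in the \emph{old} norm --- which is not controlled by $D=\max_k\|x_k-x_\ast\|_{\B_k}$; the paper instead bounds it by $\|x_k-x_\ast\|_{\B_k}+\eta_k\|\nabla f(x_k)\|^*_{\B_k}\leq\oD$ using $\eta_k\leq 1$, which is why the rate is stated in terms of $\oD$ rather than $D$. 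Your anticipated obstacle is real but not your fault: the elementary comparison gives the threshold $\|\nabla f(x_{k+1})\|^*_{\B_k}\geq\tfrac{8\alpha^2}{3L}$ (which is what the paper's own proof derives), and this does not match the expression $\tfrac{4\alpha^2}{9L^2(1+\alpha)^3}$ in the corollary statement, which appears to be a typo in the paper.
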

And finally, the following corollary of Lemma~\ref{lm:main_step_decrease} provides a sufficient condition on the inexactness levels \(\alpha_k\) to maintain the global convergence rate \(\mathcal{O}(1/k^2)\).
\begin{corollary}
    \label{cor:controllable}
    Let Assumptions~\ref{as:semi-self-concordance} and~\ref{as:inexactness} hold, and let $f$ be a convex function. Suppose Algorithm~\ref{alg:main} is run with parameters $\theta_k = 1 + \alpha_k \geq 1 + \alpha_{\max}$ and $L \geq (1 + \oalph)^{3/2} \Lsemi$. If the inexactness satisfies $
    \alpha_k \leq L \|x_{k+1} - x_k\|_{\B_k},$
    then Algorithm~\ref{alg:main} achieves the convergence rate
    $
    f(x_{k+1}) - f(x^*) = \cO(k^{-2}).
    $
\end{corollary}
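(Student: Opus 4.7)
The plan is to argue that, under the adaptive condition $\alpha_k \leq L\|h_k\|_{\B_k}$, both branches of the minimum in Lemma~\ref{lm:main_step_decrease} match the cubic-type decrement $\Omega(g_{k+1}^{3/2}/\sqrt{L})$, where $g_{k+1} \eqdef \normMd{\g(x_{k+1})}{\B_k}$. The desired $\cO(k^{-2})$ rate then follows by the standard recursion used to analyze cubically regularized Newton methods.

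The first step is to produce a lower bound on $g_{k+1}$ in terms of the step length $\|h_k\|_{\B_k}$. Writing the first-order optimality condition for subproblem~\eqref{eq:step_model} as $\g(x_k) + (\theta_k + L\|h_k\|_{\B_k})\B_k h_k = 0$, and noting that $\nabla \oQ_f(x_{k+1}; x_k) = \g(x_k) + \B_k h_k$ together with $\theta_k = 1 + \alpha_k$, one obtains $\nabla \oQ_f(x_{k+1}; x_k) = -(\alpha_k + L\|h_k\|_{\B_k})\B_k h_k$, hence $\normMd{\nabla \oQ_f(x_{k+1}; x_k)}{\B_k} = (\alpha_k + L\|h_k\|_{\B_k})\|h_k\|_{\B_k}$. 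Combining this with the gradient error bound~\eqref{eq:model_grad_bound} of Lemma~\ref{lem:model_bounds} via the reverse triangle inequality, and exploiting $\alpha_k \geq \alpha_{\max}$ and $L \geq (1+\oalph)^{3/2}\Lsemi$, yields
\begin{equation*}
    g_{k+1} \geq (\alpha_k + L\|h_k\|_{\B_k})\|h_k\|_{\B_k} - \alpha_{\max}\|h_k\|_{\B_k} - \tfrac{L}{2}\|h_k\|^2_{\B_k} \geq \tfrac{L}{2}\|h_k\|^2_{\B_k}.
\end{equation*}

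Next, invoking the condition $\alpha_k \leq L\|h_k\|_{\B_k}$ and then $\|h_k\|_{\B_k} \leq \sqrt{2 g_{k+1}/L}$ from the previous step,
\begin{equation*}
    \frac{g_{k+1}^2}{4\alpha_k} \;\geq\; \frac{g_{k+1}^2}{4L\|h_k\|_{\B_k}} \;\geq\; \frac{g_{k+1}^{3/2}}{4\sqrt{2L}}.
\end{equation*}
Since $1/(4\sqrt{2L}) \leq 1/\sqrt{6L}$, the cubic branch $g_{k+1}^{3/2}/\sqrt{6L}$ is even larger, so the minimum in Lemma~\ref{lm:main_step_decrease} dominates $g_{k+1}^{3/2}/(4\sqrt{2L})$, giving $f(x_k) - f(x_{k+1}) \geq g_{k+1}^{3/2}/(4\sqrt{2L})$.

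Finally, convexity together with bounded iterates (as in the preceding theorem) yields $f(x_{k+1}) - f(x^*) \leq \la \g(x_{k+1}), x_{k+1} - x^*\ra \leq g_{k+1}\, D$. Setting $\delta_k \eqdef f(x_k) - f(x^*)$, the per-iteration progress becomes $\delta_k - \delta_{k+1} \geq \delta_{k+1}^{3/2} / (4\sqrt{2L}\, D^{3/2})$. Running the textbook cubic-rate recursion---e.g., by introducing $c_k \eqdef \delta_k^{-1/2}$, verifying $c_{k+1} \geq c_k$, and showing that $c_{k+1}-c_k$ is bounded away from zero---converts this into $\delta_k = \cO(L D^3 / k^2)$. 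The main obstacle is the first step, i.e., the lower bound $g_{k+1} \geq L\|h_k\|_{\B_k}^2/2$: it requires delicately combining the optimality condition of the regularized model with~\eqref{eq:model_grad_bound} so that the dominant term $L\|h_k\|_{\B_k}^2$ survives after $\alpha_{\max}\|h_k\|_{\B_k}$ is cancelled against $\alpha_k\|h_k\|_{\B_k}$ (using $\alpha_k \geq \alpha_{\max}$) and $\tfrac{(1+\oalph)^{3/2}\Lsemi}{2}\|h_k\|_{\B_k}^2$ is absorbed into $\tfrac{L}{2}\|h_k\|_{\B_k}^2$ (using the prescribed lower bound on $L$). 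Once this estimate is in place, everything else is standard.
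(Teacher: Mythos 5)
Your proposal is correct and follows the same overall route as the paper: establish a cubic-type one-step decrease $f(x_k)-f(x_{k+1}) \geq \Omega\bigl(\lr\|\nabla f(x_{k+1})\|^*_{\B_k}\rr^{3/2}/\sqrt{L}\bigr)$, then combine with convexity and the standard recursion (Nesterov's sequence lemma) to get $\cO(k^{-2})$. The one genuine difference is in how the cubic decrease is extracted. The paper simply observes that the condition $\alpha_k \leq L\|x_{k+1}-x_k\|_{\B_k}$ is exactly the case split used inside the proof of Lemma~\ref{lm:main_step_decrease}, so the second (cubic) branch of that proof applies verbatim and yields $f(x_k)-f(x_{k+1}) \geq (6L)^{-1/2}\lr\|\nabla f(x_{k+1})\|^*_{\B_k}\rr^{3/2}$ with no further work. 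You instead treat the lemma as a black box and show that, under the same condition, the \emph{quadratic} branch of the minimum is also $\Omega(g_{k+1}^{3/2}/\sqrt{L})$, which requires your auxiliary estimate $g_{k+1} \geq \tfrac{L}{2}\|h_k\|_{\B_k}^2$ (derived correctly from the optimality condition and~\eqref{eq:model_grad_bound}). This buys you a proof that uses only the lemma's statement rather than its internals, at the cost of the extra derivation and a slightly worse constant ($4\sqrt{2L}$ vs.\ $\sqrt{6L}$). One small technical point: in the final convexity step you need $\|x_{k+1}-x^*\|_{\B_k}$ (the next iterate measured in the \emph{current} $\B_k$-norm), which is not directly controlled by $D=\max_k\|x_k-x_*\|_{\B_k}$; the paper handles this by using the augmented quantity $\oD = \max_k(\|x_k-x_*\|_{\B_k}+\|\nabla f(x_k)\|^*_{\B_k})$ together with $\eta_k\le 1$, and your bound should be stated in terms of $\oD$ as well.
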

Note that the inexactness condition is verifiable in practice, indicating that the method can adapt its behavior in scenarios where the inexactness is controllable.

\section{Adaptive Scheme}
In this section, we present a modification of CEQN that automatically adapts to the level of inexactness in the Hessian approximation. Our adaptive method, Algorithm~\ref{alg:adaptive}, incrementally increases the inexactness parameter \(\alpha_k\) until the model decrease condition is satisfied.
\begin{algorithm} 
    \caption{Adaptive Cubically Enhanced Quasi-Newton Method} 
    \label{alg:adaptive}
    \begin{algorithmic}[1]
        \STATE \textbf{Requires:} Initial point $x_0 \in \R^d$, constants $L, \alpha_0 > 0$, increase multiplier $\gamma_{inc} > 1$.
        \FOR {$k=0,1,\ldots, K$}
            \STATE \label{line:adaptive_stepsize} $\eta_k = \frac{2}{(1+ \alpha_k) + \sqrt{(1+\alpha_k)^2 + (1+\alpha_k)^{3/2}L\normMd{\nabla f(x_k)}{\B_k}}}$
            \STATE \label{line:adaptive_step} $x_{k+1} = x_k - \eta_k\Bi_k\nabla f(x_k)$
            \WHILE{$\langle \nabla f(x_{t+1}), x_k - x_{k+1} \rangle \leq \min \left\{ \tfrac{\lr\|\nabla f(x_{k+1})\|^*_{\B_k}\rr^2}{4\alpha_k}, \tfrac{\lr\|\nabla f(x_{t+1})\|_{\B_k}^*\rr^{3/2}}{(6(1+\alpha_k)^{3/2}L)^{1/2}}\right\}$}\label{line:adaprive_check}
             \STATE $\alpha_k= \alpha_k\gamma_{inc}$
             \STATE Recompute $\eta_k$ as in Line~\ref{line:adaptive_stepsize}
             \STATE Update $x_{k+1}$ as in Line~\ref{line:adaptive_step}
            \ENDWHILE
        \ENDFOR
        \STATE $\alpha_{k+1} = \alpha_k$
        \STATE \textbf{Return:} $x_{K+1}$
    \end{algorithmic}
\end{algorithm}
\begin{theorem}
    Let Assumptions~\ref{as:semi-self-concordance},~\ref{as:inexactness} hold, $f$ be a convex function, $\e > 0$ be the target accuracy, and
    \begin{equation} \label{eq:distance_adaptive}
    \textstyle
    	\oD \eqdef \max_{k \in [0;K+1]} \left(\|x_{k} - x_{\ast}\|_{\B_k} + \|\nabla f(x_k)\|^*_{\B_k}\right).
	\end{equation}
    Suppose Algorithm~\ref{alg:adaptive} is run with parameters $L \geq 2\Lsemi,~\alpha_0 > 0,~\gamma_{inc} > 1$. Then, to obtain a point $x_K$ such that $f(x_K) - f(x^*) \leq \varepsilon$, it suffices to perform $K$ iterations of Algorithm~\ref{alg:adaptive} for
    \begin{equation}
    \label{eq:adaptive_rate}\textstyle 
        K = \cO\left(\tfrac{\alpha_K\oD^2}{\e} + \tfrac{(1+\alpha_K)^{3/2}L\oD^3}{\sqrt{\e}} + \log_{\gamma_{\text{inc}}} \left( \tfrac{\alpha_{K}}{\alpha_0} \right) \right).
    \end{equation}
\end{theorem}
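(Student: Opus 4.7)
The plan is to reduce the analysis of Algorithm~\ref{alg:adaptive} to a recursion on $\delta_k := f(x_k) - f^{\ast}$ that mirrors the non-adaptive argument behind Lemma~\ref{lm:main_step_decrease} but with the adaptively chosen $\alpha_k$ in place of $\alpha_{\max}$. The extra ingredients are a termination argument for the inner \textbf{while} loop and a convexity-based conversion between gradient norms and functional residuals.

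First, I would show that the \textbf{while} loop on Line~\ref{line:adaprive_check} terminates at every outer iteration, and that the cumulative number of multiplicative updates $\alpha_k \leftarrow \alpha_k \gamma_{\text{inc}}$ across the entire run is at most $\log_{\gamma_{\text{inc}}}(\alpha_K/\alpha_0)$. Termination rests on a gradient-oriented strengthening of Lemma~\ref{lm:main_step_decrease}: carrying the same proof through semi-strong self-concordance and the stationarity condition for~\eqref{eq:step_model}, but bounding $\langle \nabla f(x_{k+1}), x_k - x_{k+1}\rangle$ instead of $f(x_k) - f(x_{k+1})$, yields precisely the strict inequality needed to falsify the check once $\alpha_k \geq \alpha_{\max}$ (which is ensured by $L \geq 2\Lsemi$). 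Monotonicity of $\alpha_k$ across outer iterations promotes this per-iteration bound into the global logarithmic term of~\eqref{eq:adaptive_rate}.

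Second, once the loop exits, convexity gives
\[
f(x_k) - f(x_{k+1}) \geq \langle \nabla f(x_{k+1}), x_k - x_{k+1}\rangle > \min\!\left\{\tfrac{g_{k+1}^2}{4\alpha_k},\ \tfrac{g_{k+1}^{3/2}}{\sqrt{6(1+\alpha_k)^{3/2}L}}\right\},
\]
where $g_{k+1} := \|\nabla f(x_{k+1})\|^{\ast}_{\B_k}$. To convert this into a recursion on $\delta_{k+1}$ I would use convexity and Cauchy--Schwarz, $\delta_{k+1} \leq g_{k+1}\,\|x_{k+1} - x^{\ast}\|_{\B_k}$, together with the triangle inequality and $\|x_{k+1} - x_k\|_{\B_k} = \eta_k\|\nabla f(x_k)\|^{\ast}_{\B_k} \leq \|\nabla f(x_k)\|^{\ast}_{\B_k}$ (since $\eta_k \leq 1/(1+\alpha_k) \leq 1$) to conclude $\|x_{k+1} - x^{\ast}\|_{\B_k} \leq \oD$ from the definition~\eqref{eq:distance_adaptive}. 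Substituting $g_{k+1} \geq \delta_{k+1}/\oD$ and $\alpha_k \leq \alpha_K$ yields the two-regime recursion
\[
\delta_k - \delta_{k+1} \geq \min\!\left\{\tfrac{\delta_{k+1}^2}{4\alpha_K\oD^2},\ \tfrac{\delta_{k+1}^{3/2}}{\sqrt{6(1+\alpha_K)^{3/2}L}\,\oD^{3/2}}\right\}.
\]

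Third, I would split the iterations into a cubic phase (second term attains the minimum) and a quadratic phase (first term attains the minimum). In the cubic phase, telescoping $\delta_k^{-1/2}$ gives $\delta_k = \cO((1+\alpha_K)^{3/2}L\oD^3/k^2)$; in the quadratic phase, telescoping $\delta_k^{-1}$ gives $\delta_k = \cO(\alpha_K\oD^2/k)$. Summing the iterations required in each phase to drive $\delta_k \leq \e$, and adding the logarithmic overhead from the inner loop, yields~\eqref{eq:adaptive_rate}. The main obstacle is the strengthening of Lemma~\ref{lm:main_step_decrease} needed in the first step: the adaptive check uses only the observable quantity $\langle \nabla f(x_{k+1}), x_k - x_{k+1}\rangle$, whereas Lemma~\ref{lm:main_step_decrease} is phrased in terms of the functional decrease, so pushing the semi-strong-self-concordance argument through to this stronger inner-product inequality (uniformly in the current $\alpha_k$) is the key technical step. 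The remainder is bookkeeping: switching between the two regimes and exploiting monotonicity of $\alpha_k$ to collapse per-iteration inner-loop counts to the global $\log_{\gamma_{\text{inc}}}(\alpha_K/\alpha_0)$ overhead.
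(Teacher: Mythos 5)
Your proposal is correct and follows essentially the same route as the paper: the inner-product form of Lemma~\ref{lm:main_step_decrease} that you identify as the key technical step is in fact already the intermediate inequality established in the paper's proof of that lemma (convexity is applied only at the end to pass to the functional decrease), and the paper then performs exactly your convexity/Cauchy--Schwarz conversion $g_{k+1}\ge\delta_{k+1}/\oD$, the two-regime recursion, and the phase-wise telescoping (via Nesterov's sequence lemma). The only detail you gloss over is that the split into a contiguous cubic phase followed by a contiguous quadratic phase requires showing the regime switch occurs at most once, which the paper obtains from the monotone increase of $\alpha_k$ together with the monotone decrease of $f(x_k)-f^{\ast}$.
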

The convergence rate~\eqref{eq:adaptive_rate} consists of three components:
the first term reflects the effect of inexactness in the Hessian approximation and corresponds to the gradient descent rate;
the second term matches the convergence rate of the exact Cubic Regularized Newton method;
and the third term accounts for the additional iterations incurred by the inexactness correction procedure.
\nl
All supplementary results established for Algorithm~\ref{alg:main} extend to the adaptive version as well. In particular:
\begin{itemize}[leftmargin=10pt,nolistsep,topsep=-5pt,noitemsep]  
    \item the one-step decrease and monotonicity properties (Lemma~\ref{lm:main_step_decrease}) remain valid,
    \item the transition between cubic and gradient convergence phases still occurs only once (as in Corollary~\ref{cor:convergence_neighbourhood}, with $\alpha \to \alpha_k$),
    \item and the sufficient condition for achieving the global $\cO (k^{-2})$ rate under controllable inexactness remains unchanged (Corollary~\ref{cor:controllable}).
\end{itemize}

\begin{figure}
    \centering

\begin{tabular}{ccc}

&
\texttt{real-sim}, $m=10$
&
\texttt{a9a}, $m=10$
% &
% \texttt{a9a}, $m=50$
\\
\small 
\rotatebox{90}{$\qquad\quad$\small  loss vs iteration}  & 
\includegraphics[width=0.45\linewidth]{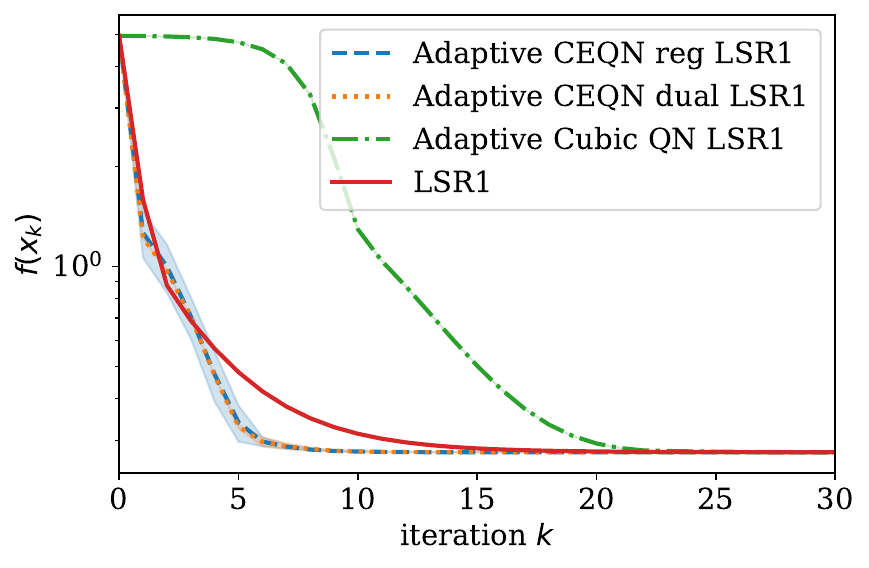}
& 
\includegraphics[width=0.45\linewidth]{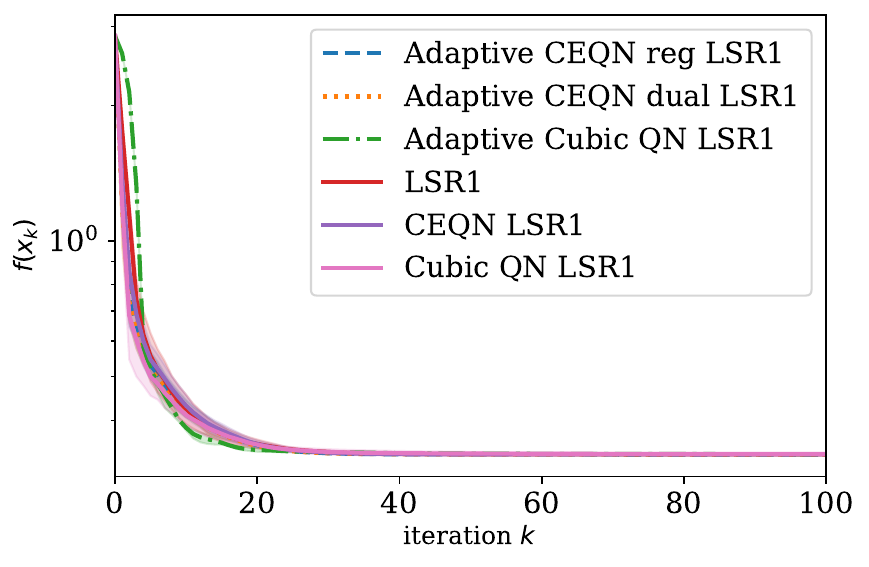}
% &
% \includegraphics[width=0.32\linewidth]{figs/real-sim-m-10/loss_vs_step.pdf}

\\ 
 
\rotatebox{90}{$\qquad\quad$\small  loss vs time}   & 
  
        \includegraphics[width=0.45\linewidth]{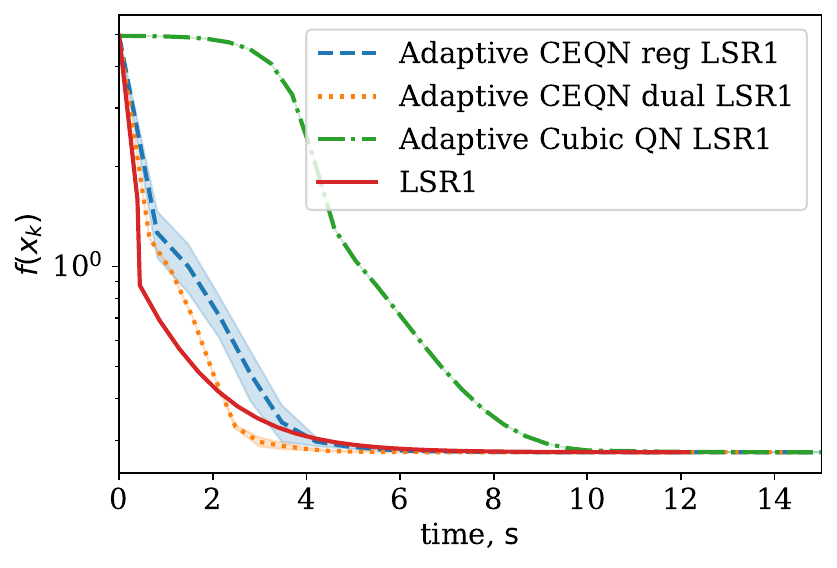 }
         &
        \includegraphics[width=0.45\linewidth]{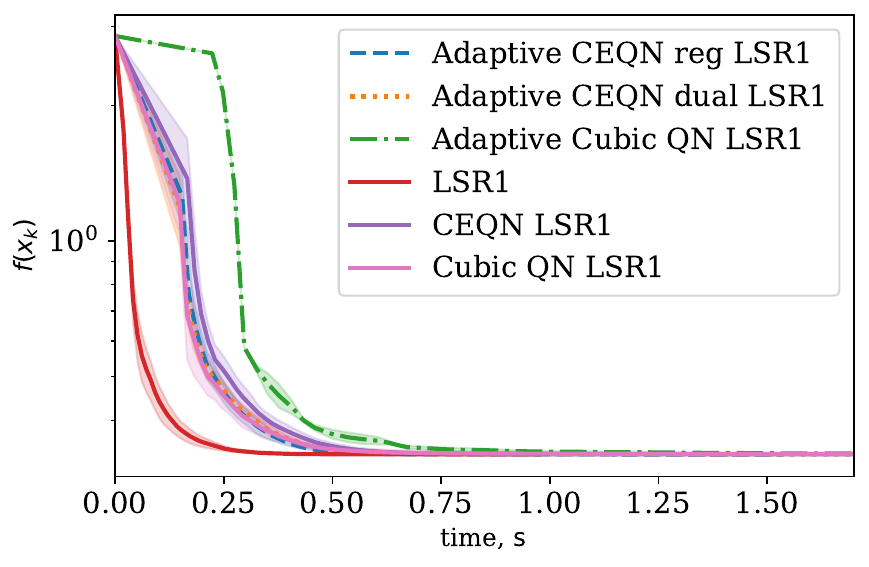}
        %  &
        % \includegraphics[width=0.32\linewidth]{figs/real-sim-m-10/loss_vs_step.pdf}
        
\\

\rotatebox{90}{$\quad$\small $\|\nabla f(x_k)\|^2$ vs iterations}   &  
        \includegraphics[width=0.45\linewidth]{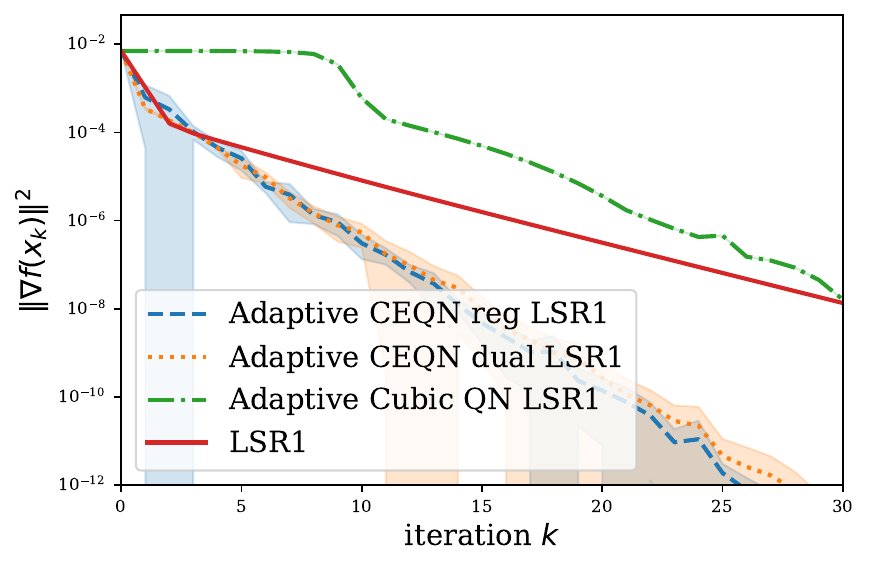}
         &
        \includegraphics[width=0.45\linewidth]{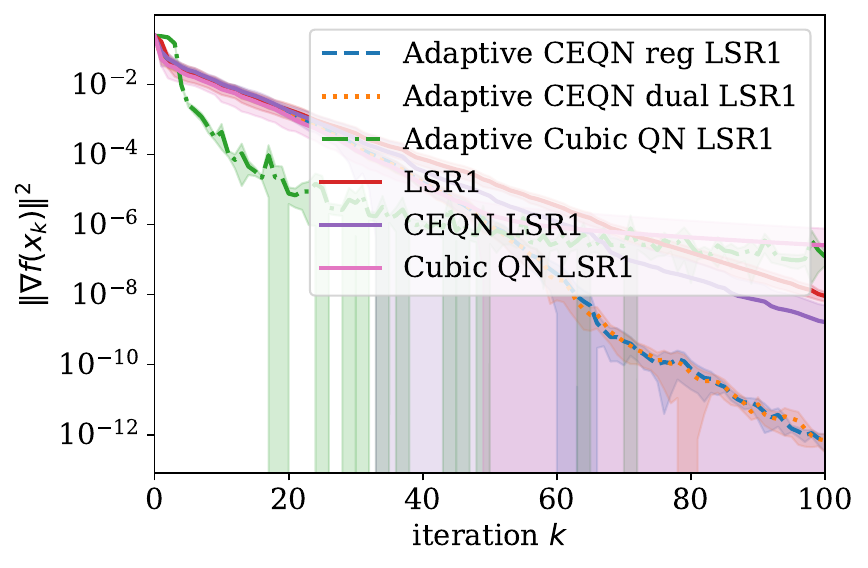}
        % &
        % \includegraphics[width=0.32\linewidth]{figs/real-sim-m-10/loss_vs_step.pdf}
         
\\
\rotatebox{90}{\small$\qquad$ $\quad$ $\|\nabla f(x_k)\|^2$ vs time}   & 
        \includegraphics[width=0.45\linewidth]{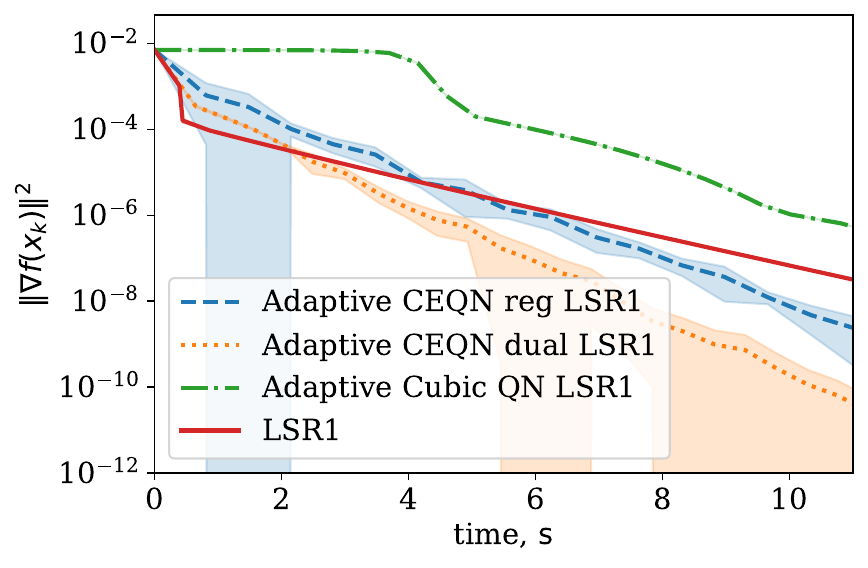}
        &
        \includegraphics[width=0.45\linewidth]{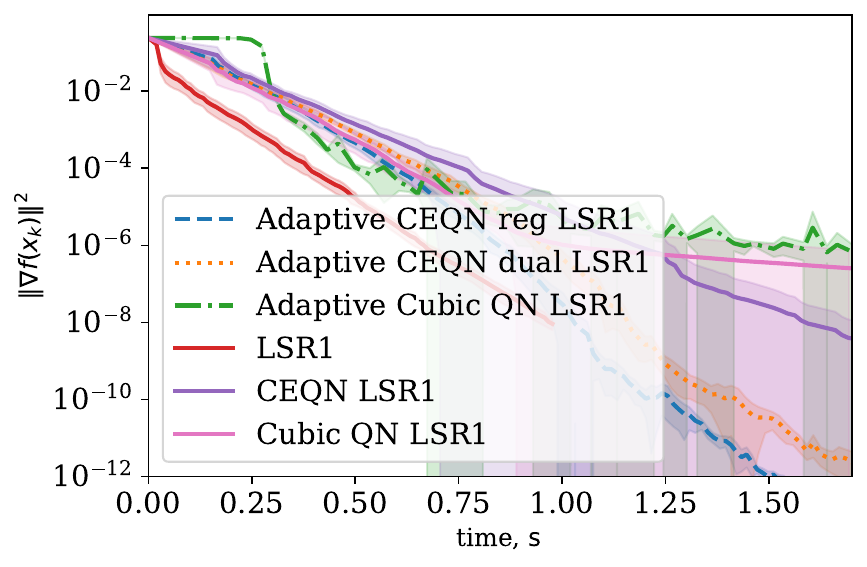}
        %   &
        % \includegraphics[width=0.32\linewidth]{figs/real-sim-m-10/loss_vs_step.pdf}
    
\\
\rotatebox{90}{\small $\qquad$ $\qquad$ $\eta_k$  vs iteration}   & 
        \includegraphics[width=0.45\linewidth]{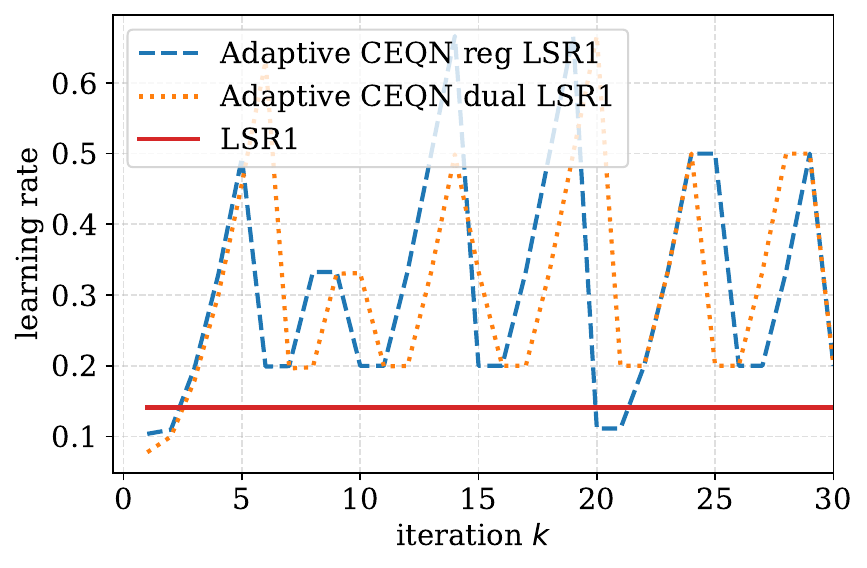}
     &
        \includegraphics[width=0.45\linewidth]{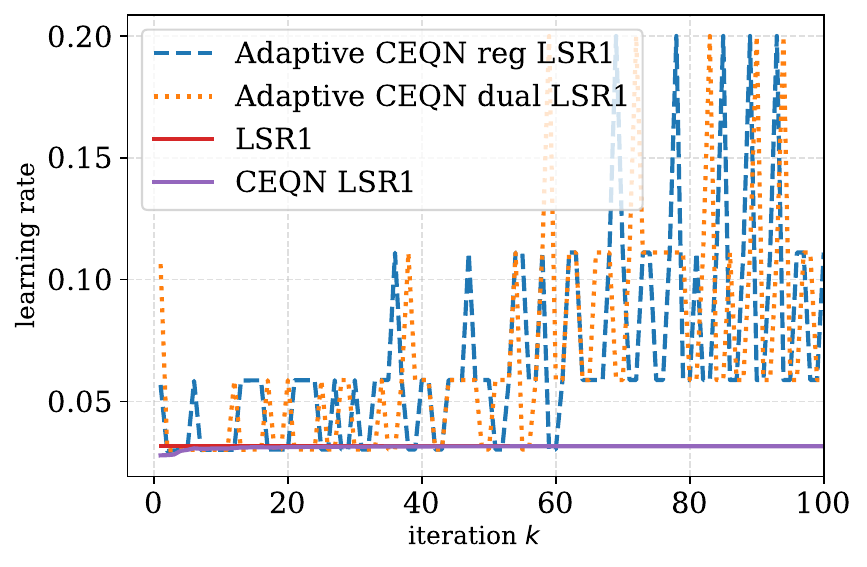}
     % &
     %       \includegraphics[width=0.32\linewidth]{figs/real-sim-m-10/loss_vs_step.pdf}
         
\end{tabular}

    \caption{Performance on \texttt{a9a} and \texttt{real-sim} datasets.}
    \label{fig:big_grid}
\end{figure}
\section{Practical Performance}
In this section, we evaluate the practical performance of the proposed CEQN stepsizes. We begin by discussing the practicality and implementability of the proposed methods.
\nl
CEQN stepsize (Algorithm~\ref{alg:main}) relies on two hyperparameters: the cubic regularization parameter $L$ and the quadratic regularization parameter $\theta$. To reduce the burden of tuning and enhance usability, we introduced an adaptive Algotrithm~\ref{alg:adaptive} in the previous section, which replaces the two parameters with a single adaptive sequence ${\alpha_k}$. This sequence is intended to track the level of approximation error in the Hessian model.
\nl
However, the original adaptive scheme suffers from a notable limitation: it only allows $\alpha_k$ to increase throughout the optimization process. As a result, the algorithm tends to significantly overestimate the actual inexactness level, which in turn degrades performance. This design choice was made to ensure the validity of theoretical convergence guarantees—allowing $\alpha_k$ to decrease would make it difficult to control the number of inexactness correction steps, thus breaking the proof structure.
\nl
\textbf{Practical Modifications.} To address this issue, we propose two practical variants of the Adaptive CEQN stepsize strategy. Both variants use a monotonic decay scheme in which the inexactness level $\alpha_k$ is multiplicatively decreased after each successful step, allowing the optimizer to better adapt to local curvature. The only difference between them lies in the condition used to decide whether a step is successful. 
\begin{algorithm} 
    \caption{Practical Adaptive Cubically Enhanced Quasi-Newton Method} 
    \label{alg:practical}
    \begin{algorithmic}[1]
        \STATE \textbf{Requires:} Initial point $x_0 \in \R^d$, constants $L, \alpha_0 > 0$, increase multiplier $\gamma_{inc} > 1$, decrease multiplier $0 < \gamma_{\text{dec}} < 1$, $\texttt{mode} \in \{\texttt{reg}, \texttt{dual}\}$.
        \FOR {$k=0,1,\ldots, K$}
            \STATE \label{line:practical_stepsize} $\eta_k = \tfrac{2}{(1+ \alpha_k) + \sqrt{(1+\alpha_k)^2 + (1+\alpha_k)^{3/2}L\normMd{\nabla f(x_k)}{\B_k}}}$
            \STATE \label{line:practical_step} $x_{k+1} = x_k - \eta_k\Bi_k\nabla f(x_k)$
            \WHILE{$\texttt{CheckAccept}(x_{k+1}, \texttt{mode})$}
             \STATE $\alpha_k= \alpha_k\gamma_{inc}$
             \STATE Recompute $\eta_k$ as in Line~\ref{line:practical_stepsize}
             \STATE Update $x_{k+1}$ as in Line~\ref{line:practical_step}
            \ENDWHILE
            \STATE $\alpha_{k+1} = \alpha_k \gamma_{\text{dec}}$
        \ENDFOR
        \STATE \textbf{Return:} $x_{K+1}$
    \end{algorithmic}
\end{algorithm}
% The first option remains the same as in : $x_{k+1}$ obtained after a step with this condition 

The first variant which we denote the $\texttt{dual}$ condition uses the theoretical regularity condition from our analysis (Line~\ref{line:adaprive_check} of Algorithm~\ref{alg:adaptive}) and leads to one step decrease shown in Lemma~\ref{lm:main_step_decrease}.
The second variant, denoted \texttt{reg} condition adopts a similar condition to Adaptive Cubic Regularized Quasi-Newton. Its $\texttt{CheckAccept}$ is verifies the function value decrease
\begin{equation}
    \label{eq:mode_reg}
        f(x_{k+1}) \leq f(x_k) - \tfrac{1}{2}\eta_k\lr\|\nabla f(x_k)\|_{\B_k}^*\rr^2 - \tfrac{L}{6}\eta_k^3\lr\|\nabla f(x_k)\|_{\B_k}^*\rr^3,
\end{equation}
with $\theta = 1 + \alpha_k, ~ L = \Lsemi(1 + \alpha_k)^{3/2}$. This is supported by the following result:
\begin{lemma}
    Let Assumptions~\ref{as:semi-self-concordance},~\ref{as:inexactness} hold. Step~\eqref{eq:quasi_general} with CEQN stepsize~\eqref{eq:step_quasi} and with parameters $\theta \geq 1 + \alpha_\text{max}, ~ L \geq (1+\oalph)^{3/2}\Lsemi$ implies one-step decrease~\eqref{eq:mode_reg}.
\end{lemma}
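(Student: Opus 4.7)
The plan is to combine the inexact-model upper bound from Lemma~\ref{lem:model_bounds} applied at the CEQN iterate with the algebraic identity satisfied by the stepsize $\eta_k$.

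First, I would instantiate inequality~\eqref{eq:model_difference_up} with $y = x_{k+1}$ and $x = x_k$, and expand $\oQ_f(x_{k+1}; x_k)$ using its definition~\eqref{eq:inexact_newton_approximation}. Substituting the CEQN step $x_{k+1} - x_k = -\eta_k \Bi_k \nabla f(x_k)$ collapses every term to a scalar expression in $r := \normMd{\nabla f(x_k)}{\B_k}$, using $\la \nabla f(x_k), x_{k+1}-x_k\ra = -\eta_k r^2$, $\|x_{k+1}-x_k\|_{\B_k}^2 = \eta_k^2 r^2$, and $\|x_{k+1}-x_k\|_{\B_k}^3 = \eta_k^3 r^3$. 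This yields
\[
 f(x_{k+1}) \le f(x_k) - \eta_k r^2 + \tfrac{1+\oalph}{2}\eta_k^2 r^2 + \tfrac{(1+\oalph)^{3/2}\Lsemi}{6}\eta_k^3 r^3.
\]

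Next, I would invoke the hypotheses $\theta \ge 1+\alpha_{\max}\ge 1+\oalph$ and $L \ge (1+\oalph)^{3/2}\Lsemi$ to majorize the two coefficients, reducing the bound to
\[
 f(x_{k+1}) \le f(x_k) - \eta_k r^2 + \tfrac{\theta}{2}\eta_k^2 r^2 + \tfrac{L}{6}\eta_k^3 r^3.
\]

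The heart of the argument is the following algebraic identity: by the first-order optimality condition of~\eqref{eq:step_model}, the scalar $\eta_k$ satisfies $\theta\eta_k + L\eta_k^2 r = 1$, so $\theta\eta_k = 1 - L\eta_k^2 r$. Multiplying this by $\tfrac{1}{2}\eta_k r^2$ rewrites the quadratic term as $\tfrac{1}{2}\eta_k r^2 - \tfrac{L}{2}\eta_k^3 r^3$, and combining the three terms collapses the inequality to $f(x_{k+1}) \le f(x_k) - \tfrac12 \eta_k r^2 - \tfrac{L}{3}\eta_k^3 r^3$, which is at least as strong as~\eqref{eq:mode_reg} since $\tfrac{L}{3} \ge \tfrac{L}{6}$.

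I do not expect a genuine obstacle: the only subtlety is tracking the distinction between $\oalph$ appearing in~\eqref{eq:model_difference_up} and $\alpha_{\max}$ appearing in the hypothesis, both of which collapse to the single majorization $1+\oalph \le \theta$. The argument is essentially a one-line cancellation enabled by the very equation that defines the CEQN stepsize $\eta_k$.
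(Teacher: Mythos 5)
Your proposal is correct and follows essentially the same route as the paper's proof: the paper also combines the semi-strong self-concordance upper bound (converted to $\B_k$-norms via Assumption~\ref{as:inexactness}, which is exactly the content of~\eqref{eq:model_difference_up}) with the stepsize's defining relation, the only cosmetic difference being that the paper contracts the vector optimality condition with $\tfrac{1}{2}(x_{k+1}-x_k)^{\top}$ whereas you use the equivalent scalar identity $\theta\eta_k + L\eta_k^2 r = 1$ directly. Your final constant $\tfrac{L}{3}$ on the cubic term dominates the claimed $\tfrac{L}{6}$, so the conclusion follows.
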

A theoretical bound of this form can be found in~\citep[Lemma 4]{nesterov2006cubic}, where it is used in the analysis of the Cubic Regularized Newton method for the nonconvex case. Although we cannot guarantee a global iteration complexity bound for Algorithm~\ref{alg:practical}, both conditions ensure a provable decrease in the objective at each step.
\nl
\textbf{Hessian Approximation.} Experiments presented in this section approximate the inverse Hessian $\Bi_k \approx \nabla^2 f(x_k)$ using limited-memory SR1 method based on $m$ sampled curvature pairs $(s_i, y_i)$. These pairs are generated by sampling random directions $d_i \sim \mathcal{N}(0, I)$ and computing s $s_i = d_i, y_i = \nabla^2 f(x_k) d_i$ via Hessian-vector product. This sampling-based approach decouples curvature estimation from the optimization trajectory and may offer improved robustness. We set the initial inverse Hessian approximation as  $\Bi_{k}^0  = \Bi_{0} = cI$ with $c > 0$ and compute the product $\Bi_k \nabla f(x_k)$ using the limited-memory SR1 update in a compact recursive form:
\begin{equation*}\textstyle
    \label{eq:lsr1}
    \Bi_{k}^{i+1} \nabla f(x_k) = \Bi_{k}^{i} \nabla f(x_k) + \tfrac{(s_i - \Bi_{k}^{i}y_i)^T\nabla f(x_k)}{(s_i - \Bi_{k}^{i}y_i)^Ty_i}(s_i - \Bi_{k}^{i}y_i), \quad i \in [0, m].
\end{equation*}
where $\Bi_k = \Bi_k^m$ denotes the final approximation used at iteration $k$.

\textbf{Experimental Setup.} In this section we consider $l_2$ regularized logistic regression problem,
\begin{equation}
    \label{eq:logistic}
    f(x)= \tfrac{1}{n}{\textstyle \sum_{i=1}^n} \log(1+ \exp(-b_i a_i^{\top}x)) + \tfrac{\mu}{2}\|x\|^2,
\end{equation}
where ${(a_i, b_i)}_{i=1}^n$ are training examples, with $a_i \in \mathbb{R}^d$ representing feature vectors and $b_i \in \{-1, 1\}$ the corresponding class labels. The parameter $\mu \geq 0$ controls the strength of $\ell_2$ regularization. 
We set $\mu = 10^{-4}$, and initialize the approximation as $10^{-4}I$, and set starting point as all-one vector. 
We use datasets from the \texttt{LIBSVM}~\citep{Chang2011LIBSVM} collection: \texttt{a9a} ($d = 123$) and \texttt{real-sim} ($d = 20{,}958$) to evaluate performance. 
For the consistency, all experiments on a given dataset were conducted using the same workstation with \texttt{NVIDIA RTX A6000}. 
\nl
We compare six algorithms on problem~\eqref{eq:logistic}: LSR1, LSR1 with CEQN stepsizes (Algorithm~\ref{alg:main}), two versions of LSR1 with adaptive stepsizes (Algorithm~\ref{alg:practical}), and Cubic Regularized Quasi-Newton with LSR1 updates~\citep{kamzolov2023cubic}, both with and without adaptivity.
\nl
We fine-tune all hyperparameters via grid searches. For LSR1, we tune the parameter $L$ and use stepsize $\eta_k = 1/L$. For Algorithm~\ref{alg:main}, we tune both $\theta = 1 + \alpha$ and $L$. For Algorithm~\ref{alg:practical}, we fix $\alpha_0 = 1$ and tune $L$. For Cubic Quasi-Newton, we tune $(L, \delta)$ in the non-adaptive case, and $L$ in the adaptive variant, where we set $\delta_0 = 0.1$. All adaptive algorithms use $\gamma_{\text{inc}} = 2$ and $\gamma_{\text{dec}} = 0.5$. All algorithms are run with $m=10$ and evaluated across $5$ different  random seeds.  The complete hyper-parameter search grids, the corresponding best-tuned values, and further experimental results are reported in the Appendix.
\vspace{-5pt}
\paragraph{Results.} We present convergence results on Figure~\ref{fig:big_grid}.  On the larger \texttt{real-sim} dataset—where we compared only the adaptive variants and standard LSR1—the benefit of the proposed CEQN stepsize is pronounced. CEQN consistently outperforms the competing algorithms in both iteration count and wall-clock time when measured by log-loss and gradient-squared.
A key insight is provided by the step-size evolution plot: the adaptive schemes automatically adjust to the accuracy of the Hessian approximation, allowing their steps to grow well beyond the fixed, optimal step length used by classical LSR1. A similar pattern is observed on the \texttt{a9a} dataset. Although the difference in objective values is less visually striking, the increasing step sizes translate into faster gradient convergence. Finally, the loss- and gradient-squared-versus-time curves show that the number of extra inner updates required to satisfy the acceptance tests of Algorithm~\ref{alg:practical} is small. Consequently, CEQN achieves superior performance not only in terms of iterations but also in wall-clock time.
\vspace{-5pt}
\section{Limitations}
This study focuses on Quasi-Newton (QN) methods equipped with an additional $\B$-norm regularization that yields an explicit stepsize formula.
The resulting stepsize, however, depends on two constants, one of which—the current accuracy level of the Hessian approximation—is unknown in practice.
Although we mitigate this issue by proposing an adaptive strategy with provable convergence, the analysis guarantees adaptation only to the largest inaccuracy level encountered. For the more practical variant (Algorithm \ref{alg:practical}) we can prove only a one-step decrease; a full global convergence rate remains open. Intriguing directions for future research include whether a Hessian‐approximation scheme can be devised that reaches the ideal $\cO(1/k^{2})$ rate without extra assumptions on inexactness, how strong‐convexity parameters might reshape the CEQN stepsize and its guarantees, and whether an adaptive mechanism can be designed to track the current (rather than maximal) inexactness level while still retaining rigorous complexity bounds.

\clearpage
\bibliography{quasiN}
\bibliographystyle{plainnat}

\newpage
\appendix
\part*{Appendix} \label{p:appendix}

\section{Other Related Works}

Second-order methods have a long and rich history, tracing back to the pioneering works~\citep{newton1687philosophiae, raphson1697analysis, simpson1740essays, bennett1916newton}. Research in this area typically addresses two main aspects: local convergence properties and globalization strategies. For more historical context on the development of second-order methods, we refer to~\citep{polyak2007newton}.

A major breakthrough in globally convergent second-order methods came with the introduction of cubic regularization by~\citet{nesterov2006cubic}, who proposed augmenting the second-order Taylor approximation with a cubic term to guarantee global convergence, achieving a convergence rate matching that of accelerated gradient descent~\citep{nesterov1983method}. This approach was further accelerated in~\citep{nesterov2008accelerating}, establishing a convergence rate that surpasses the lower bounds for first-order methods.  These foundational works initiated a new line of research in second-order optimization, encompassing generalizations to higher-order derivatives~\citep{baes2009estimate, nesterov2021implementable}, near-optimal~\citep{gasnikov2019near, bubeck2019near} and optimal acceleration techniques~\citep{kovalev2022first, carmon2022optimal}, and faster convergence rates under higher smoothness assumptions~\citep{nesterov2021superfast, nesterov2021auxiliary, kamzolov2020near, doikov2024super}.

However, methods based on cubic or higher-order regularization typically require solving a nontrivial subproblem at each iteration, which introduces computational overhead. To mitigate this, several approaches have been proposed to simplify the cubic regularized Newton step, enabling explicit or efficiently computable updates~\citep{polyak2009regularized, polyak2017complexity, mishchenko2021regularized, doikov2021gradient, doikov2024super, hanzely2022damped}. Such methods can also employ faster convergence under higher smoothness assumptions~\citep{hanzely2024newton}.

Even without cubic regularization, the classical Newton method is computationally demanding, as it requires solving a linear system involving the Hessian or computing its inverse at each iteration. Quasi-Newton methods~\citep{dennis1977quasi, nocedal1999numerical} address this by efficiently constructing low-rank approximations of the (inverse) Hessian, thereby  reducing the per-iteration cost.

Another class of approaches reduces computational complexity by applying Newton-type updates in low-dimensional subspaces~\citep{SDNA, RSN, RBCN, SSCN}, or by employing Hessian sketches to approximate curvature information~\citep{NewtonSketch, xu2020newton, SN2019}. These techniques can also be integrated into cubic-regularized frameworks that admit explicit stepsizes~\citep{hanzely2023sketch}.

Several works have investigated the impact of inexact Hessian information on the convergence behavior of cubic-regularized methods in standard optimization problems~\citep{ghadimi2017second, agafonov2024inexact, antonakopoulos2022extra, agafonov2023advancing}, min-max optimization~\citep{lin2022explicit}, and variational inequalities~\citep{agafonov2024exploring}. 
Additionally, second-order methods with inexact or stochastic derivatives have demonstrated strong performance in distributed optimization settings~\citep{zhang2015disco, daneshmand2021newton, agafonov2021accelerated, dvurechensky2022hyperfast, agafonov2022flecs, agafonov2022flecscgd}. 

More recently, efficient inexact second-order methods have been proposed specifically for large-scale training of language models~\citep{gupta2018shampoo, vyas2024soap, liu2024sophia, jordanmuon, liu2025muon, kovalev2025understanding, riabinin2025gluon}.

% \citet{hanzely2023sketch} extended AICN stepsizes for the Newton method towards first-order methods via memory-less low-rank sketch-and-project approach \citep{RSN}, leading to per-iteration convergence comparable to first-order methods while preserving convergence rate of full-rank Newton methods.
% However, features of sketching-and-project approach (low-rank updates and memory-less) limit local convergence rate to condition-independent linear rate \citep{SSCN, hanzely2023sketch} and the superlinear rate is not possible.

% \citet{hanzely2024newton} compared implicit linesearch procedures in the descent direction of the Newton method and demonstrated that Armijo linesearch underperforms compared to exact greedy linesearch minimizing functional value and even smoothness estimation procedures using backtracking. This indicates that in the context of quasi-Newton methods simple stepsize schedule with smoothness estimation backtracking might still be preferable to implicit linesearches based Frank-Wolfe/Armijo conditions

% \begin{remark}
%     \citet{hanzely2024newton} showed that cubic regularization can be replaced with the regularization of the order $q \geq 2$ when $\cq=1$ and $\B_k=\h(x_k)$. This interpolates between second-order methods and third-order tensor methods and results in a method with stepsize $\st_q \eqdef \frac 1 {1 + c_q\normM{\g(x_k)} {\h(x_k)}^{*{\frac 1 {q-1}}} }.$
% \end{remark}

\section{Convergence Analysis}

Second-order Taylor approximation:
\begin{equation} 
    \label{eq:newton_approximation}
    Q_f(y;x) \eqdef f(x) + \la\nabla f(x), y - x\ra + \tfrac{1}{2} \la \nabla^2 f(x)(y - x), y - x\ra.
\end{equation}

Inexact second-order Taylor approximation:
\begin{equation} 
    \label{eq:inexact_newton_approximation}
    \oQ_f(y;x) \eqdef f(x) + \la\nabla f(x), y - x\ra + \tfrac{1}{2} \la \B_x (y - x), y - x\ra,
\end{equation}

\begin{assumption} \label{as:app_semi-self-concordance}
    Convex function $f \in C^2$  is called \textit{semi-strongly self-concordant} if
    \begin{equation}
    \label{eq:app_semi-strong-self-concordance}
    \left\|\nabla^2 f(y)-\nabla^2 f(x)\right\|_{op} \leq \Lsemi \|y-x\|_{x} , \quad \forall y,x \in \R^d.
    \end{equation} 
\end{assumption}

\begin{lemma}[\cite{hanzely2022damped}]\label{le:model}
    If $f$ is semi-strongly self-concordant, then
    \begin{equation}
        \label{eq:lower_upper_bound}
        \left|f(y) - Q_f(y;x) \right|  \leq \tfrac{\Lsemi}{6}\|y-x\|_x^3, \quad \forall x,y\in \R^d.
    \end{equation}
    Consequently, we have upper bound for function value in form
    \begin{equation} \label{eq:wssc_ub}
    f(y) \leq Q_{f}(y;x) +\tfrac{\Lsemi}{6}\|y-x\|_{x}^3.
    \end{equation}
\end{lemma}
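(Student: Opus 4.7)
The plan is to use the standard second-order Taylor remainder with integral form and bound the integrand via the semi-strong self-concordance assumption. First I would write the integral remainder
\begin{equation*}
f(y) - Q_f(y;x) = \int_0^1 (1-t)\,\big\langle \big(\nabla^2 f(x_t) - \nabla^2 f(x)\big)(y-x),\, y-x\big\rangle\, dt,
\end{equation*}
where $x_t \eqdef x + t(y-x)$. This identity follows from the Taylor formula with integral remainder applied twice (once to $f$, once to $\nabla f$).

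Next I would bound the integrand pointwise. For each fixed $t$, I would apply the inequality
\begin{equation*}
|\langle \mathbf A h, h\rangle| \;\leq\; \|\mathbf A\|_{op}\, \|h\|_x^2
\end{equation*}
valid for symmetric $\mathbf A$ in the operator norm induced by $\nabla^2 f(x)$. Applied with $\mathbf A = \nabla^2 f(x_t) - \nabla^2 f(x)$ and $h = y-x$, this gives
\begin{equation*}
\big|\langle (\nabla^2 f(x_t)-\nabla^2 f(x))(y-x), y-x\rangle\big|
\;\leq\; \|\nabla^2 f(x_t) - \nabla^2 f(x)\|_{op}\, \|y-x\|_x^2.
\end{equation*}
Then I would invoke Assumption~\ref{as:app_semi-self-concordance} to estimate $\|\nabla^2 f(x_t) - \nabla^2 f(x)\|_{op} \leq \Lsemi \|x_t - x\|_x = \Lsemi t \|y-x\|_x$, obtaining an integrand bounded by $\Lsemi t (1-t) \|y-x\|_x^3$.

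Finally I would integrate: $\int_0^1 t(1-t)\,dt = \tfrac{1}{6}$, which yields the claimed two-sided bound
\begin{equation*}
|f(y) - Q_f(y;x)| \;\leq\; \tfrac{\Lsemi}{6}\|y-x\|_x^3.
\end{equation*}
The upper-bound form \eqref{eq:wssc_ub} is then an immediate consequence by dropping the absolute value on one side. The main subtlety I expect is keeping track of the correct reference point in the operator norm: the semi-strong self-concordance assumption is anchored at $x$ rather than at $x_t$, which is precisely what makes the argument work without needing to relate $\nabla^2 f(x_t)$ and $\nabla^2 f(x)$ multiplicatively (as one must do in the strongly self-concordant analysis of Rodomanov-Nesterov). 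Everything else reduces to routine Taylor bookkeeping and an elementary one-variable integral.
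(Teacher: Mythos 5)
Your proof is correct. Note that the paper does not prove this lemma at all — it is imported verbatim from \citet{hanzely2022damped} — and your argument (integral form of the second-order Taylor remainder, Cauchy--Schwarz in the $\nabla^2 f(x)$-inner product to pass to the operator norm anchored at $x$, then $\int_0^1 t(1-t)\,dt = \tfrac{1}{6}$) is precisely the standard derivation used in that reference, including the observation that the norm anchoring at $x$ is what makes the bound come out additively rather than multiplicatively.
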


\begin{lemma}[\cite{hanzely2022damped}]\label{le:model_grad}
    For semi-strongly self-concordant function $f$ holds
    \begin{equation}
        \label{eq:semi_norm_ineq_}
        \left\|\nabla f(y) - \nabla f(x) - \nabla^2 f(x)[y-x] \right\|^*_x
        \leq \frac{\Lsemi}{2} \|y-x\|_x^2.
    \end{equation}
\end{lemma}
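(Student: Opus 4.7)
The plan is to prove the gradient remainder bound by the standard fundamental-theorem-of-calculus technique, combined with the semi-strong self-concordance assumption applied pointwise along the segment from $x$ to $y$.

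First, I would write
\begin{equation*}
    \nabla f(y) - \nabla f(x) - \nabla^2 f(x)(y-x)
    = \int_0^1 \bigl[\nabla^2 f(x + t(y-x)) - \nabla^2 f(x)\bigr](y-x)\, dt,
\end{equation*}
which follows from the fundamental theorem of calculus applied to $t \mapsto \nabla f(x + t(y-x))$, then subtracting the constant term $\nabla^2 f(x)(y-x) = \int_0^1 \nabla^2 f(x)(y-x)\,dt$.

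Next I would take $\|\cdot\|_x^*$ on both sides, move the dual norm inside the integral via the triangle inequality, and then bound each integrand using the operator norm inequality $\|Av\|_x^* \leq \|A\|_{op}\|v\|_x$ which holds by the definition of $\|\cdot\|_{op}$ given in the Notation section. This yields
\begin{equation*}
    \bigl\|\nabla f(y) - \nabla f(x) - \nabla^2 f(x)(y-x)\bigr\|_x^*
    \leq \int_0^1 \bigl\|\nabla^2 f(x + t(y-x)) - \nabla^2 f(x)\bigr\|_{op}\,\|y-x\|_x\, dt.
\end{equation*}

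Then I would apply Assumption~\ref{as:app_semi-self-concordance} with the point $x+t(y-x)$ in place of $y$, so that $\|\nabla^2 f(x+t(y-x)) - \nabla^2 f(x)\|_{op} \leq \Lsemi \|x+t(y-x)-x\|_x = \Lsemi\, t\,\|y-x\|_x$. Substituting and pulling the $\|y-x\|_x^2$ factor out of the integral gives
\begin{equation*}
    \bigl\|\nabla f(y) - \nabla f(x) - \nabla^2 f(x)(y-x)\bigr\|_x^*
    \leq \Lsemi \|y-x\|_x^2 \int_0^1 t\, dt = \tfrac{\Lsemi}{2}\|y-x\|_x^2,
\end{equation*}
which is the claimed bound.

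The argument is essentially routine, so there is no serious obstacle; the only subtle point is verifying that the operator norm defined by the paper is precisely compatible with the dual-primal pair $(\|\cdot\|_x^*, \|\cdot\|_x)$ used on the right-hand side of Assumption~\ref{as:app_semi-self-concordance}, which is immediate from the definition $\|A\|_{op} = \sup_{v}\|Av\|_x^*/\|v\|_x$. This gives the inequality $\|Av\|_x^* \leq \|A\|_{op}\|v\|_x$ used inside the integral, and everything else is a direct computation.
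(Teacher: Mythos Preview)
Your argument is correct and is the standard derivation of this gradient-remainder bound. Note, however, that the paper does not actually supply its own proof of this lemma: it is imported directly from \cite{hanzely2022damped} and stated without proof. Your integral-representation approach is exactly the canonical one used in that reference, so there is nothing to compare; the proposal is sound and matches the expected argument.
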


\begin{assumption} \label{as:app_inexactness}
    For a function $f(x)$ and point $x \in \mathbb{R}^d$, a positive definite matrix $B_x \in \mathbb{R}^{d \times d}$ is considered a $(\ualph, \oalph)$-relative inexact Hessian with $0 \leq \ualph \leq 1, ~ 0 \leq \oalph$ if it satisfies the inequality
    \begin{equation}
        \label{eq:app_inexactness}
            (1-\ualph) \B_x \preceq \h(x) \preceq (1+\oalph) \B_x,  
        \end{equation}
\end{assumption}

\begin{lemma}
    \label{lem:app_model_bounds}
    Let Assumptions~\ref{as:app_semi-self-concordance} and~\ref{as:app_inexactness} hold. Then, for any \(x, y \in \R^d\), the following inequalities hold:
    \begin{align}
        f(y) - \oQ_f(y;x) 
        &\leq \tfrac{\oalph}{2} \|y - x\|_{\B_x}^2 
             + \tfrac{(1 + \oalph)^{3/2} L_{\mathrm{semi}}}{6} \|y - x\|_{\B_x}^3, 
             \label{eq:model_difference_up} \\
        \oQ_f(y;x) - f(y) 
        &\leq \tfrac{\ualph}{2} \|y - x\|_{\B_x}^2 
             + \tfrac{(1 + \oalph)^{3/2} L_{\mathrm{semi}}}{6} \|y - x\|_{\B_x}^3, 
             \label{eq:model_difference_low} \\
        \|\nabla \oQ_f(y;x) - \nabla f(y)\|^*_{\B_x} 
        &\leq \alpha_{\max} \|y - x\|_{\B_x} 
             + \tfrac{(1 + \oalph)^{3/2} L_{\mathrm{semi}}}{2} \|y - x\|_{\B_x}^2,
             \label{eq:model_grad_bound}
    \end{align}
    where $\alpha_{\max} := \max(\ualph, \oalph)$.
\end{lemma}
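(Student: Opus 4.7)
The plan is to prove all three inequalities simultaneously by inserting the exact Taylor model $Q_f(y;x)$ as an intermediate reference between $f$ and $\oQ_f$, then separately controlling (i) the deviation of the exact model from $f$ via semi-strong self-concordance and (ii) the deviation of the inexact model from the exact one via Assumption~\ref{as:app_inexactness}. For the two scalar bounds I would start from the decomposition
\[
f(y) - \oQ_f(y;x) = \bigl[f(y) - Q_f(y;x)\bigr] + \tfrac{1}{2}\la(\nabla^2 f(x) - \B_x)(y-x),\, y-x\ra.
\]
The first bracket is bounded in absolute value by $\tfrac{\Lsemi}{6}\|y-x\|_x^3$ (Lemma~\ref{le:model}). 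For the second bracket, the upper inexactness side $\nabla^2 f(x) - \B_x \preceq \oalph \B_x$ yields $\tfrac{\oalph}{2}\|y-x\|_{\B_x}^2$ for inequality~\eqref{eq:model_difference_up}, while the opposite side $\B_x - \nabla^2 f(x) \preceq \ualph \B_x$ yields $\tfrac{\ualph}{2}\|y-x\|_{\B_x}^2$ for~\eqref{eq:model_difference_low}.

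Next I would unify the geometry by converting the Hessian-induced norm $\|\cdot\|_x$ into the $\B_x$-norm. The relation $\nabla^2 f(x) \preceq (1+\oalph)\B_x$ gives $\|y-x\|_x \leq \sqrt{1+\oalph}\,\|y-x\|_{\B_x}$, so $\|y-x\|_x^3 \leq (1+\oalph)^{3/2}\|y-x\|_{\B_x}^3$. Substituting into the bound from Lemma~\ref{le:model} produces the cubic terms with the exponent $(1+\oalph)^{3/2}$ that appear in~\eqref{eq:model_difference_up}--\eqref{eq:model_difference_low}, finishing the first two inequalities.

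For the gradient bound~\eqref{eq:model_grad_bound} I would compute $\nabla \oQ_f(y;x) = \nabla f(x) + \B_x(y-x)$ and split
\[
\nabla \oQ_f(y;x) - \nabla f(y) = \bigl[\nabla f(x) - \nabla f(y) + \nabla^2 f(x)(y-x)\bigr] + (\B_x - \nabla^2 f(x))(y-x).
\]
The first bracket is bounded in the dual Hessian norm $\|\cdot\|^*_x$ by Lemma~\ref{le:model_grad}; converting into $\|\cdot\|^*_{\B_x}$ requires $\B_x^{-1} \preceq (1+\oalph)\nabla^2 f(x)^{-1}$ (equivalent to $\nabla^2 f(x) \preceq (1+\oalph)\B_x$), which contributes another $\sqrt{1+\oalph}$ factor, combining with the $(1+\oalph)\|y-x\|_{\B_x}^2$ from the squared norm conversion to give the desired $(1+\oalph)^{3/2}$. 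The second bracket is handled by observing that Assumption~\ref{as:app_inexactness} is equivalent to $-\oalph I \preceq \B_x^{-1/2}(\nabla^2 f(x) - \B_x)\B_x^{-1/2} \preceq \ualph I$, so the operator norm of $\B_x - \nabla^2 f(x)$ in the $\B_x$ geometry is at most $\alpha_{\max}$, giving $\|(\B_x - \nabla^2 f(x))(y-x)\|^*_{\B_x} \leq \alpha_{\max}\|y-x\|_{\B_x}$. A triangle inequality on $\|\cdot\|^*_{\B_x}$ then assembles~\eqref{eq:model_grad_bound}.

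The main technical subtlety, rather than a real obstacle, is bookkeeping of norms: three distinct geometries ($\|\cdot\|_x$, $\|\cdot\|_{\B_x}$, and operator norm relative to $\B_x$) appear and must be matched on each side of each inequality using the correct side of the two-sided inexactness~\eqref{eq:app_inexactness}. In particular, the asymmetry between $\ualph$ and $\oalph$ in the quadratic terms of~\eqref{eq:model_difference_up}--\eqref{eq:model_difference_low} comes entirely from this choice, whereas the cubic terms are symmetric because the $\|\cdot\|_x \to \|\cdot\|_{\B_x}$ conversion always uses the same upper inexactness side, and the gradient inequality collapses both sides into $\alpha_{\max}$ via the spectral argument above.
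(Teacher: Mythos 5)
Your proposal is correct and follows essentially the same route as the paper's proof: inserting the exact model $Q_f(y;x)$ as an intermediate, bounding the model discrepancy with Lemma~\ref{le:model} / Lemma~\ref{le:model_grad}, converting $\|\cdot\|_x$ to $\|\cdot\|_{\B_x}$ via the upper inexactness side, and handling the $(\B_x-\nabla^2 f(x))(y-x)$ term with the same spectral argument in the $\B_x$-geometry. Your explicit accounting of how the $(1+\oalph)^{3/2}$ factor in the gradient bound splits into $\sqrt{1+\oalph}$ (dual-norm conversion) times $(1+\oalph)$ (squared primal-norm conversion) is in fact slightly more transparent than the paper's one-line step.
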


\begin{proof}
    For any $x,y \in \R^d$,
    \begin{align}
        f(y) - \oQ_f(y;x) & = f(y) - Q_f(y;x) + Q_f(y;x)- \oQ_f(y;x)\notag \\
        &\stackrel{\eqref{eq:wssc_ub}}{\leq} \tfrac{\Lsemi}{6}\|y-x\|_x^3 + Q_f(y;x)- \oQ_f(y;x)\notag \\
        &= \tfrac{\Lsemi}{6}\|y-x\|_x^3 + \tfrac{1}{2}\la (\h(x) - \B_x)(y-x), (y-x)\ra\notag \\
        &\stackrel{\eqref{eq:app_inexactness}}{\leq} \tfrac{\Lsemi}{6}\|y-x\|_x^3 + \tfrac{\oalph}{2}\la \B_x(y-x), (y-x)\ra \\
        &= \tfrac{\Lsemi}{6}\|y-x\|_x^3 + \tfrac{\oalph}{2}\|y-x\|_{\B_x}^2 \label{eq:model_difference_up_inter1}
    \end{align}

    Representing $\h(x)$-norm in terms of $\B_k$-norm
    \begin{align}
        \|y-x\|_x^3 & = \la \h(x)(y-x), y-x \ra^{3/2} \stackrel{\eqref{eq:app_inexactness}}{\leq} \la (1+\oalph) \B_x (y-x), y-x \ra^{3/2} \notag \\
        &= (1+\oalph)^{3/2}\|y-x\|_{B_x}^{3}, \label{eq:hess_approx_repr}
    \end{align}
    we get for any $x,y \in \R^d$
    \begin{equation*}
        f(y) - \oQ_f(y;x) = \tfrac{\Lsemi}{6}\|y-x\|_x^3 + \tfrac{\oalph}{2}\|y-x\|_{\B_x}^2 \stackrel{\eqref{eq:hess_approx_repr}}{\leq} \tfrac{(1+\oalph)^{3/2}\Lsemi}{6}\|y-x\|_{\B_x}^{3} + \tfrac{\oalph}{2}\|y-x\|_{\B_x}^2.
    \end{equation*}

    For any $x,y \in \R^d$
    \begin{align*}
        \oQ_f(y;x) - f(y)  & = \oQ_f(y;x) - Q_f(y;x) + Q_f(y;x) - f(y)   \\
        &\stackrel{\eqref{eq:wssc_ub}}{\leq}  \oQ_f(y;x) - Q_f(y;x) + \tfrac{\Lsemi}{6}\|y-x\|_x^3 \\
        &= \tfrac{1}{2}\la (\B_x - \h(x))(y-x), (y-x)\ra +  \tfrac{\Lsemi}{6}\|y-x\|_x^3 \\
        &\stackrel{\eqref{eq:app_inexactness}}{\leq}  \tfrac{\ualph}{2}\la \B_x(y-x), (y-x)\ra + \tfrac{\Lsemi}{6}\|y-x\|_x^3 =  \tfrac{\ualph}{2}\|y-x\|_{\B_x}^2  + \tfrac{\Lsemi}{6}\|y-x\|_x^3 \\
        & \stackrel{\eqref{eq:hess_approx_repr}}{\leq} \tfrac{\ualph}{2}\|y-x\|_{\B_x}^2 + \tfrac{(1+\oalph)^{3/2}\Lsemi}{6}\|y-x\|_{\B_x}^{3}
    \end{align*}

    For any $x,~y \in \R^d$,
    \begin{align}
        \label{eq:model_grad_bound1}
        \|\nabla f(y) - \nabla \oQ_f(y;x)\|^*_{\B_x} & = \|\nabla f(y) - \nabla Q_f(y;x) + \nabla Q_f(y;x)- \nabla \oQ_f(y;x)\|^*_{\B_x} \notag \\
        & = \normMd{\nabla f(y) - \nabla f(x) - \h(x)(y-x) + (\h(x) - \B_x)(y-x)}{{\B_x}} \notag \\
        & \leq   \normMd{\nabla f(y) - \nabla f(x) - \h(x)(y-x)}{\B_x} \notag \\
        & +  \normMd{(\h(x) - \B_x)(y-x)} {\B_x}\notag \\
        & \stackrel{\eqref{eq:semi_norm_ineq_}}{\leq} \frac{\Lsemi}{2}\|y-x\|_{x}^2 + \normMd{(\h(x) - \B_x)(y-x)} {\B_x}\notag \\
        & \stackrel{\eqref{eq:app_inexactness}}{\leq} \frac{(1+\oalph)^{3/2}\Lsemi}{2}\|y-x\|_{\B_x}^2 + \normMd{(\h(x) - \B_x)(y-x)} {\B_x} 
    \end{align}

    For the second term, let $u \eqdef \B_x^{1/2}(y-x)$. Then:
    \begin{align}
    \normMd{(\h(x) - \B_x)(y-x)} {\B_x} 
        &\notag \\
        & =  (y-x)^T(\B_x - \h(x))\Bi_x(\B_x - \h(x))(y-x) \notag \\
        = (y-x)^T\B_x^{1/2}\B_x^{-1/2}&(\B_x - \h(x))\B_x^{-1/2}\B_x^{-1/2}(\B_x - \h(x))\B_x^{1/2}\B_x^{-1/2}(y-x) \notag \\
        &{=} u^T(I - \B_x^{-1/2}\h(x)\B_x^{-1/2})^2 u \notag.
    \end{align}
    
    By~\eqref{eq:app_inexactness}, we have
    \begin{equation*}
        -\oalph I \preceq (I - \B_x^{-1/2}\h(x)\B_x^{-1/2}) \preceq \ualph I \quad \Rightarrow \quad (I-\B_x^{-1/2}\h(x)\B_x^{-1/2})^2 \preceq \alpha_{\max} I.
    \end{equation*}
    
    Therefore,
    \begin{equation*}
        \normMd{(\h(x) - \B_x)(y-x)} {\B_x} \leq \alpha_{\max} u^T u = \alpha_{\max} (y-x)^T\B_x(y-x) = \alpha_{\max} \normM{y-x} {\B_x}.
    \end{equation*}
    Plugging this bound into~\eqref{eq:model_grad_bound1} finishes the proof.
\end{proof}

\subsection{Non-adaptive Method}

\begin{algorithm} 
    \caption{ Cubically Enhanced Quasi-Newton Method} 
    \label{alg:main}
    \begin{algorithmic}[1]
        \STATE \textbf{Requires:} Initial point $x_0 \in \R^d$, constants $L, \theta > 0$.
        \FOR {$k=0,1,\ldots, K$}
            \STATE 
                    $\eta_k = \tfrac{2}{\theta + \sqrt{\theta^2 + L\normM{\nabla f(x_k)}{\Bi_k}}}$
            \STATE  $
                     x_{k+1} = x_k - \eta_k\Bi_k\nabla f(x_k)$
        \ENDFOR
        \STATE \textbf{Return:} $x_{K+1}$
    \end{algorithmic}
\end{algorithm}

\begin{theorem}
    Let Assumptions~\ref{as:app_semi-self-concordance},~\ref{as:app_inexactness} hold, $f$ be a convex function, and 
    \begin{equation} \label{eq:distance}
    	D \eqdef \max\limits_{k \in [0;K+1]} \|x_k - x_{\ast}\|_{\B_k}.
	\end{equation}
    After $K+1$ iterations of Algorithm~\ref{alg:main} with parameters
    \begin{equation}\label{eq:hyperparams}
        \theta \geq  1 + \oalph,~L \geq \tfrac{(1+\oalph)^{3/2}\Lsemi}{2},
    \end{equation}
    we get the following bound
    \begin{equation*}
        f(x_{K+1}) - f(x_\ast) \leq \tfrac{(\ualph + \oalph)}{2}\tfrac{9D^2}{K+3} + (1+\oalph)^{3/2}\tfrac{3\Lsemi D^3}{(K+1)(K+2)}.
    \end{equation*}
\end{theorem}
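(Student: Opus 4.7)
The plan is to turn the CEQN minimization property into a one-step descent inequality for $f$, then run a standard Nesterov-style estimating-sequence telescoping. By the parameter choice $\theta \geq 1+\oalph$ and $L \geq \tfrac{(1+\oalph)^{3/2}\Lsemi}{2}$, the surrogate
$$M_k(y) \eqdef f(x_k) + \la\nabla f(x_k), y-x_k\ra + \tfrac{\theta}{2}\|y-x_k\|_{\B_k}^2 + \tfrac{L}{3}\|y-x_k\|_{\B_k}^3$$
pointwise dominates the upper bound in Lemma~\ref{lem:model_bounds}, so $f(y) \leq M_k(y)$ globally. Since $x_{k+1}$ is the minimizer of $M_k$, this yields the ``free'' inequality $f(x_{k+1}) \leq M_k(y)$ at any comparison point $y \in \R^d$.

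To obtain the sharp coefficient $\ualph+\oalph$ (and not the naive $1+\oalph$), I rewrite $M_k$ by peeling off $\tfrac{1}{2}\|y-x_k\|_{\B_k}^2$ to reconstitute $\oQ_f(y;x_k)$, and then use the lower bound of Lemma~\ref{lem:model_bounds} to replace $\oQ_f(y;x_k)$ by $f(y) + \tfrac{\ualph}{2}\|y-x_k\|_{\B_k}^2 + \tfrac{(1+\oalph)^{3/2}\Lsemi}{6}\|y-x_k\|_{\B_k}^3$. Evaluating at $y = (1-\tau_k)x_k + \tau_k x^*$ and applying convexity $f(y) \leq (1-\tau_k)f(x_k) + \tau_k f^*$ together with $\|y-x_k\|_{\B_k} \leq \tau_k D$ produces the descent recursion
$$\delta_{k+1} \leq (1-\tau_k)\delta_k + \tfrac{\ualph+\oalph}{2}\tau_k^2 D^2 + \tfrac{(1+\oalph)^{3/2}\Lsemi}{3}\tau_k^3 D^3,$$
where $\delta_k \eqdef f(x_k)-f^*$.

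I then pick $\tau_k = \tfrac{3}{k+3}$ together with potential weights $A_k = \tfrac{k(k+1)(k+2)}{6}$, which are tailored so that $A_0 = 0$ and $A_{k+1}(1-\tau_k) = A_k$. Multiplying the recursion by $A_{k+1}$ and telescoping over $k=0,\ldots,K$ produces
$$A_{K+1}\delta_{K+1} \leq \tfrac{\ualph+\oalph}{2}D^2\sum_{k=0}^{K} A_{k+1}\tau_k^2 + \tfrac{(1+\oalph)^{3/2}\Lsemi}{3}D^3\sum_{k=0}^{K} A_{k+1}\tau_k^3.$$
Using the closed forms $A_{k+1}\tau_k^2 = \tfrac{3(k+1)(k+2)}{2(k+3)}$ and $A_{k+1}\tau_k^3 = \tfrac{9(k+1)(k+2)}{2(k+3)^2}$, elementary algebraic bounds control these sums at orders $\cO(K^2)$ and $\cO(K)$ respectively; dividing by $A_{K+1} = (K+1)(K+2)(K+3)/6$ then yields the claimed $\tfrac{1}{K+3}$ and $\tfrac{1}{(K+1)(K+2)}$ decay.

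The main non-routine step is recovering the coefficient $\ualph+\oalph$: a direct use of the convex lower bound $f(x_k)+\la\nabla f(x_k),y-x_k\ra \leq f(y)$ would only yield $1+\oalph$. Going via $\oQ_f$ and the lower half of Lemma~\ref{lem:model_bounds} delivers the sharper constant at the price of doubling the cubic coefficient (from $\Lsemi/6$ to $\Lsemi/3$), which is precisely why the hypothesis on $L$ reads $L \geq (1+\oalph)^{3/2}\Lsemi/2$ rather than the more naive $\Lsemi/6$. The remaining telescoping is routine but requires careful tracking of the numerical constants $9/2$ and $3$ to match the exact bound stated in the theorem.
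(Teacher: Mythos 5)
Your proposal is correct and follows essentially the same route as the paper's proof: the model $M_k$ upper-bounds $f$ under the stated parameter choices, so $f(x_{k+1})\leq\min_y M_k(y)$; the coefficient $\ualph+\oalph$ is recovered exactly as you describe, by reassembling $\oQ_f(y;x_k)$ inside $M_k$ and applying the lower estimate \eqref{eq:model_difference_low} (at the cost of doubling the cubic coefficient to $\tfrac{2L}{3}$); and the restriction to $y=(1-\gamma_k)x_k+\gamma_k x_*$ with $\gamma_k=\tfrac{3}{k+3}$ followed by the weighted telescoping is identical, your weights $A_{k+1}=\tfrac{(k+1)(k+2)(k+3)}{6}$ being simply the reciprocals of the paper's $A_k=\prod_{i=1}^k(1-\gamma_i)$. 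The only remaining work is the elementary summation bounds (the paper cites Ghadimi et al.\ (2.23) for these), so no substantive gap.
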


\begin{proof}
    \begin{align*}
        f(x_{k+1}) 
        % &\stackrel{\eqref{eq:model_difference_up}}{\leq} \oQ(x_{k+1}; x_k) + \tfrac{\oalph}{2}\|x_{k+1}-x_k\|_{\B_k}^2 + \tfrac{(1+\oalph)^{3/2}\Lsemi}{6}\|x_{k+1}-x_k\|_{\B_k}^{3}  \\
        % & \stackrel{\eqref{eq:hyperparams}}{=} f(x_k) + \tfrac{\theta}{2}\|x_{k+1}-x_k\|_{\B_k}^2 + \tfrac{L}{3}\|x_{k+1}-x_k\|_{\B_k}^{3} \\
        & = \min_{y \in \R^d} \lc f(x_k) + \tfrac{\theta}{2}\|y-x_k\|_{\B_k}^2 + \tfrac{L}{3}\|y-x_k\|_{\B_k}^{3}\rc \\
        & \stackrel{\eqref{eq:hyperparams}}{\leq} \min_{y \in \R^d} \lc f(x_k) + \tfrac{1}{2}\|y-x_k\|_{\B_k}^2 + \tfrac{\oalph}{2}\|y-x_k\|_{\B_k}^2 + \tfrac{L}{3}\|y -x_k\|_{\B_k}^{3} \rc \\
        & \stackrel{\eqref{eq:inexact_newton_approximation}}{=} \min_{y \in \R^d} \lc \oQ_f(y; x_k) + \tfrac{\oalph}{2}\|y-x_k\|_{\B_k}^2 + \tfrac{L}{3}\|y -x_k\|_{\B_k}^{3} \rc \\
        & \stackrel{\eqref{eq:model_difference_low}}{\leq} \min_{y \in \R^d} \lc f(y) + \tfrac{\ualph + \oalph}{2}\|y-x_k\|_{\B_k}^2 + \tfrac{2L}{3}\|y-x_k\|_{\B_k}^{3} \rc \\
         & \stackrel{\eqref{eq:distance}}{\leq} \min_{\gamma_t \in [0, 1]} \lc f(x_k + \gamma_k  (x_\ast - x_k)) + \tfrac{\ualph + \oalph}{2}\gamma_k^2D^2 + \tfrac{2L}{3}\gamma_k^3 D^{3} \rc\\
         & \stackrel{\text{convexity}}{\leq} \min_{\gamma_t \in [0, 1]} \lc (1 - \gamma_k)f(x_k) + \gamma_k f(x_\ast) + \tfrac{\ualph + \oalph}{2}\gamma_k^2D^2 + \tfrac{2L}{3}\gamma_k^3 D^{3}\rc
    \end{align*}
    Subtracting $f(x_\ast)$ from both sides, we get for any $\gamma_k \in [0, 1]$
    \begin{equation}\label{eq:conv_to_sum}
        f(x_{k+1}) - f(x_\ast) \leq (1 - \gamma_k)(f(x_k) - f(x_\ast)) + \tfrac{\ualph + \oalph}{2}\gamma_k^2D^2 + \tfrac{2L}{3}\gamma_k^3 D^{3}.
    \end{equation}
    Let us select $\gamma_0 = 1$ and define sequence $A_k$
    \begin{equation*} \label{eq:seq_A}
		A_k \eqdef 
		\begin{cases}
    		1 , & k = 0\\
    		\prod \limits_{i=1}^k (1-\gamma_i), & k\geq 1.
		\end{cases}
	\end{equation*}
    Then $A_k = (1-\eta_k)A_{t-k}$. Dividing both sides of \eqref{eq:conv_to_sum} by $A_k$, we get
    \begin{align*}
        \tfrac{1}{A_k}(f(x_{k+1}) - f(x_\ast)) & \leq \tfrac{(1 - \gamma_k)}{A_k}(f(x_k) - f(x_\ast)) + \tfrac{\ualph + \oalph}{2}\tfrac{\gamma_k^2}{A_k}D^2 + \tfrac{2L}{3}\tfrac{\gamma_k^3}{A_k} D^{3} \\ 
        & = \tfrac{1}{A_{k-1}}(f(x_k) - f(x_\ast)) + \tfrac{\ualph + \oalph}{2}\tfrac{\gamma_k^2}{A_k}D^2 + \tfrac{2L}{3}\tfrac{\gamma_k^3}{A_k} D^{3}.
    \end{align*}
    Summing both sides of inequality above from $k=0, \ldots, K$, we obtain
    \begin{equation}\label{eq:summed_inequality}
         \tfrac{1}{A_K}(f(x_{K+1}) - f(x_\ast)) \leq  \tfrac{1-\gamma_0}{A_0}(f(x_0) - f(x_\ast)) + \tfrac{\ualph + \oalph}{2}D^2 \sum_{k=0}^K \tfrac{\gamma_k^2}{A_k} + \tfrac{2L}{3} D^{3} \sum_{k=0}^K \tfrac{\gamma_k^3}{A_k}.
    \end{equation}
    Let us choose $\gamma_k = \tfrac{3}{k+3}$. By [(2.23), \citet{ghadimi2017second}], we have
    \begin{equation}
        \label{eq:seq_A_estimates}
        A_k = \frac 6 {(k+1)(k+2)(k+3)},~
        \sum_{k=0}^K \frac {\gamma_k^2} {A_k}  \leq \frac {3(K+1)(K+2)} 2, ~
        \sum_{k=0}^K \frac {\gamma_k^3} {A_k}  \leq \frac {3K} 2.
    \end{equation}
    \begin{align*}
        f(x_{K+1}) - f(x_\ast) & \stackrel{  \eqref{eq:summed_inequality}, ~\gamma_0 = 1}{=} A_K \tfrac{(\ualph + \oalph)D^2}{2}\tfrac{3(K+1)(K+2)}{2} + A_K \tfrac{2LD^3}{3}\tfrac{3K}{2} \\
        & \stackrel{\eqref{eq:seq_A_estimates}}{=} \tfrac{(\ualph + \oalph)}{2}\tfrac{9D^2}{K+3} + \tfrac{6LD^3}{(K+1)(K+2)} \stackrel{\eqref{eq:hyperparams}}{\leq} \tfrac{(\ualph + \oalph)}{2}\tfrac{9D^2}{K+3} + (1+\oalph)^{3/2}\tfrac{3\Lsemi D^3}{(K+1)(K+2)}
    \end{align*}
\end{proof}

\begin{lemma}
    \label{lm:main_step_decrease}
    Let Assumptions~\ref{as:app_semi-self-concordance},~\ref{as:app_inexactness} hold and $f(x)$ be a convex function. Quasi-Newton methods with CEQN stepsize with parameters $\theta =  1 + \alpha \geq 1 + \alpha_\text{max}, ~ L \geq (1+\oalph)^{3/2}\Lsemi$ implies the following one-step decrease  
    \begin{equation}
        \label{eq:main_step_decrease}
        f(x_{k}) - f(x_{k+1})
        \geq 
         \min \left\{ \tfrac{1}{4\alpha }\|\nabla f(x_{k+1})\|^{*2}_{\B_k}, 
        \left( \tfrac{1}{6L}\right)^\frac{1}{2} \|\nabla f(x_{k+1})\|^{*\frac 32}_{{\B_k}} \right \} \geq 0.
    \end{equation}
\end{lemma}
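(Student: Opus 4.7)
The plan is to combine three ingredients: (i) the first-order optimality condition for the CEQN subproblem, (ii) the model upper bound on $f$ from Lemma~\ref{lem:model_bounds}, and (iii) a triangle-inequality bound on $\|\nabla f(x_{k+1})\|_{\B_k}^*$ via~\eqref{eq:model_grad_bound}. Throughout, I denote $h_k = x_{k+1}-x_k$, $r_k = \|h_k\|_{\B_k}$, and $g_k = \|\nabla f(x_{k+1})\|_{\B_k}^*$.

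First, I would use the first-order optimality of $x_{k+1}$ for the cubic model $M(y)=f(x_k)+\langle\nabla f(x_k),y-x_k\rangle+\tfrac{\theta}{2}\|y-x_k\|_{\B_k}^2+\tfrac{L}{3}\|y-x_k\|_{\B_k}^3$ to obtain $\nabla f(x_k) = -(\theta+Lr_k)\B_k h_k$, and hence $\langle\nabla f(x_k),h_k\rangle = -(\theta+Lr_k)r_k^2$. Writing $\alpha=\theta-1$, the gradient of the inexact Newton model satisfies $\nabla\oQ_f(x_{k+1};x_k) = \nabla f(x_k)+\B_k h_k = -(\alpha+Lr_k)\B_k h_k$, so that $\|\nabla\oQ_f(x_{k+1};x_k)\|_{\B_k}^* = (\alpha+Lr_k)r_k$.

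Second, I would invoke~\eqref{eq:model_difference_up}: the choices $\theta\ge 1+\oalph$ and $L\ge (1+\oalph)^{3/2}\Lsemi$ ensure that $M$ globally upper-bounds $f$, so $f(x_{k+1})\le M(x_{k+1})$. Substituting the optimality identity into $M(x_{k+1})$ yields the decrease
\[
f(x_k) - f(x_{k+1}) \;\geq\; \tfrac{\theta}{2} r_k^2 + \tfrac{2L}{3} r_k^3.
\]
Third, I would upper-bound $g_k$ using the triangle inequality together with~\eqref{eq:model_grad_bound}:
\[
g_k \;\leq\; \|\nabla\oQ_f(x_{k+1};x_k)\|_{\B_k}^* + \|\nabla f(x_{k+1})-\nabla\oQ_f(x_{k+1};x_k)\|_{\B_k}^* \;\leq\; 2\alpha r_k + \tfrac{3L}{2} r_k^2,
\]
where I use $\alpha\ge\alpha_{\max}$ and $L\ge (1+\oalph)^{3/2}\Lsemi$ to absorb constants.

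Finally, I would invert this bound into a lower bound on $r_k$ and plug into the decrease, splitting into two regimes according to which of the two right-hand-side terms dominates. If $r_k\le\tfrac{4\alpha}{3L}$, then $g_k\le 4\alpha r_k$, giving $r_k\ge g_k/(4\alpha)$; using the quadratic part $\tfrac{\theta}{2}r_k^2\ge\tfrac{1}{2}r_k^2$ of the decrease then gives a bound of order $g_k^2/\alpha$. Otherwise $g_k\le 3L r_k^2$, giving $r_k\ge\sqrt{g_k/(3L)}$; using the cubic part $\tfrac{2L}{3}r_k^3$ then gives a bound of order $g_k^{3/2}/\sqrt{L}$. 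The stated minimum then captures both regimes at once, and non-negativity is automatic since both candidates involve positive powers of $g_k\ge 0$ with positive denominators.

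The main obstacle is pinning down the sharp constants $\tfrac{1}{4\alpha}$ and $\tfrac{1}{\sqrt{6L}}$: the crude case split produces the correct $g_k^2/\alpha$ and $g_k^{3/2}/\sqrt{L}$ scalings but with slightly worse numerical prefactors. Recovering the claimed constants would require using the exact quadratic-formula root $r_k\ge g_k/\bigl(\alpha+\sqrt{\alpha^2+\tfrac{3L}{2}g_k}\bigr)$ of the inequality $g_k\le 2\alpha r_k+\tfrac{3L}{2}r_k^2$ and inserting it into the full decrease $\tfrac{\theta}{2}r_k^2+\tfrac{2L}{3}r_k^3$, then extracting the two asymptotic regimes $\tfrac{3L}{2}g_k\lessgtr\alpha^2$ directly from the closed form; this is routine but requires careful bookkeeping of the $(1+\alpha)$ factors coming from $\theta$.
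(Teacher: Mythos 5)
Your proposal is correct in its overall architecture (optimality condition of the cubic model, the upper/lower model bounds of Lemma~\ref{lem:model_bounds}, and a conversion from the step length $r_k$ to the new gradient norm $g_k$), but it takes a genuinely different route from the paper, and the difference matters for the constants in the statement. You lower-bound the functional decrease by the model decrease $\tfrac{\theta}{2}r_k^2+\tfrac{2L}{3}r_k^3$ and then substitute a lower bound on $r_k$ obtained by inverting $g_k\le 2\alpha r_k+\tfrac{3L}{2}r_k^2$. The paper instead never passes through the model decrease: it writes the optimality condition as $\nabla \oQ_f(x_{k+1};x_k)=-\zeta_k\B_k(x_{k+1}-x_k)$ with $\zeta_k=\alpha+L\,r_k$, expands
$\normsMd{\nabla \oQ_f(x_{k+1};x_k)-\nabla f(x_{k+1})}{\B_k}=\zeta_k^2r_k^2+g_k^2+2\zeta_k\la\nabla f(x_{k+1}),x_{k+1}-x_k\ra$,
bounds the left side by $(\alpha_{\max}+\oL r_k)^2r_k^2$ via~\eqref{eq:model_grad_bound}, and thereby isolates $\la\nabla f(x_{k+1}),x_k-x_{k+1}\ra\ge \tfrac{g_k^2}{2\zeta_k}+[\zeta_k^2-(\alpha_{\max}+\oL r_k)^2]\tfrac{r_k^2}{2\zeta_k}$; convexity at $x_{k+1}$ then converts this inner product into $f(x_k)-f(x_{k+1})$. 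This produces $g_k^2$ at the \emph{first} power of the slack $\tfrac{1}{2\zeta_k}$, which is exactly what yields $\tfrac{1}{4\alpha}$ when $\zeta_k\le2\alpha$, and the $\tfrac{1}{\sqrt{6L}}$ constant via the inequality $\tfrac{a}{r}+\tfrac{\beta r^3}{3}\ge\tfrac43\beta^{1/4}a^{3/4}$ in the other regime.

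The one point where your write-up is over-optimistic is the closing claim that recovering the stated constants is "routine bookkeeping." Along your route the conversion $r_k\ge g_k/\bigl(\alpha+\sqrt{\alpha^2+\tfrac{3L}{2}g_k}\bigr)$ gets squared (resp.\ cubed) inside the model decrease, so in the quadratic regime you obtain at best roughly $\tfrac{(1+\alpha)}{2(3+2\sqrt2)}\tfrac{g_k^2}{\alpha^2}$, which is below $\tfrac{g_k^2}{4\alpha}$ whenever $\alpha$ is not small, and in the cubic regime you obtain about $\tfrac{2}{9\sqrt3}\tfrac{g_k^{3/2}}{\sqrt L}$ versus the claimed $\tfrac{1}{\sqrt6}\tfrac{g_k^{3/2}}{\sqrt L}$. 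So your argument proves the lemma only up to absolute constant factors; to get the constants as stated you need the paper's device of bounding the inner product $\la\nabla f(x_{k+1}),x_k-x_{k+1}\ra$ directly rather than the model decrease. Since every downstream use of the lemma tolerates absolute constants, this is a degradation rather than a logical failure, but it should be flagged rather than described as routine.
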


\begin{proof}
    By optimality condition of CEQN regularized model
    \begin{align}
        \label{eq:ceqn_optimality_cnd}
        0 = \nabla \oQ(x_{k+1}, x_k) + (\theta - 1 + L\normM{x_{k+1} - x_k}{\B_k})\B_k(x_{k+1 - x_k}). 
    \end{align}
    Let us define $ \zeta_k \eqdef \alpha + L\normM{x_{k+1} - x_k}{\B_k}, ~\oL = \frac{\Lsemi}{2}(1 + \oalph)^{3/2}$, where $\alpha = \theta - 1$. 
    Next, 
    \begin{align}
          (\alpha_{\max}  +  \oL\|x_{k+1} -&  x_k\|_{\B_k})^2 \|x_{k+1} - x_k\|_{\B_k}^2  \notag \\
          & \stackrel{\eqref{eq:model_grad_bound}}{\geq} {\|\nabla \oQ_f(x_{k_1};x_k) - f(x_{k+1})\|^{*2}_{\B_k}} \stackrel{\eqref{eq:ceqn_optimality_cnd}}{=} \normsMd{\zeta_k \B_k(x_{k+1} - x_k) + \nabla f(x_{k+1})}{\B_k} \notag \\
          & = \zeta_k^2 \normM{x_{k+1} - x_k}{\B_k}^2 + {\normsMd{\nabla f(x_{k+1})}{\B_k}} + 2\zeta_k \la\nabla f(x_{k+1}), x_{k+1} - x_k\ra \label{eq:aceqn_step_improvement_general}.
    \end{align}
    We consider two cases, based on which term in  $\zeta_k$ dominates.
    \begin{itemize}[noitemsep,topsep=0pt,leftmargin=12pt]
        \item Let $\alpha \geq L\normM{x_{k+1} - x_k}{\B_k}$. Then $\zeta_k \leq 2 \alpha$. By the choice of the parameters, we have 
        \begin{align}
            \zeta_k^2\normM{x_{k+1} - x_k}{\B_k} & \geq
            \left(\alpha_{\max}  +  \tfrac{(1+\oalph)^{3/2}\Lsemi}{2}\|x_{k+1} - x_k\|_{\B_k}\right)^2 \|x_{k+1} - x_k\|_{\B_k}^2   \notag \\
            & \stackrel{\eqref{eq:aceqn_step_improvement_general}}{\geq}  \zeta_k^2 \normM{x_{k+1} - x_k}{\B_k}^2 + {\normsMd{\nabla f(x_{k+1})}{\B_k}} + 2\zeta_k \la\nabla f(x_{k+1}), x_{k+1} - x_k\ra \notag.
        \end{align}
        Therefore, 
        \begin{equation}
            \la\nabla f(x_{k+1}),  x_k - x_{k+1}\ra \geq \frac{1}{2\zeta_k} {\normsMd{\nabla f(x_{k+1})}{\B_k}} \geq \frac{1}{4\alpha} {\normsMd{\nabla f(x_{k+1})}{\B_k}}.
        \end{equation}

        \item Now, let $\alpha < L\normM{x_{k+1} - x_k}{\B_k}$. Then $\zeta_k < 2L\normM{x_{k+1} - x_k}{\B_k}$.
        
        From
        \begin{align}
            (\alpha_{\max}  +  \oL \|x_{k+1} -  x_k\|_{\B_k})^2 &\|x_{k+1} - x_k\|_{\B_k}^2  \notag \\
            \stackrel{\eqref{eq:aceqn_step_improvement_general}}{\geq} &  \zeta_k^2 \normM{x_{k+1} - x_k}{\B_k}^2 + {\normsMd{\nabla f(x_{k+1})}{\B_k}} + 2\zeta_k \la\nabla f(x_{k+1}), x_{k+1} - x_k\ra \notag
        \end{align}
        and our choice of parameters, we get 
        \begin{align}
            \langle \nabla f&(x_{k+1}), x_k - x_{k+1} \rangle \notag \\
            \geq&
            \frac{{\normsMd{\nabla f(x_{k+1})}{\B_k}}}{2\zeta_k} + 
           \ls \zeta_k^2- ( \alpha_{\max} + \oL \normM{x_{k+1} - x_k}{\B_k}^2) \rs \frac{\normM{x_{k+1} - x_k}{\B_k}^2}{2\zeta_k}  \notag\\
           = &\frac{{\normsMd{\nabla f(x_{k+1})}{\B_k}}}{2\zeta_k} \notag \\
           &\, +  (\alpha -  \alpha_{\max} + \tfrac{L-\oL}{2}\normM{x_{k+1} - x_k}{\B_k}) ( \alpha + \alpha_{\max} + \tfrac{L+\oL}{2}\normM{x_{k+1} - x_k}{\B_k})  \tfrac{\normM{x_{k+1} - x_k}{\B_k}^2}{2\zeta_k}  \notag\\
            \geq &  \frac{{\normsMd{\nabla f(x_{k+1})}{\B_k}}}{4L\normM{x_{k+1} - x_k}{\B_k}}
            + \frac{L^2-\oL^2}{4L} \normM{x_{k+1} - x_k}{\B_k}^3  \notag\\
            = & \frac{{\normsMd{\nabla f(x_{k+1})}{\B_k}}}{4L\normM{x_{k+1} - x_k}{\B_k}}
            + \frac{3L}{16} \normM{x_{k+1} - x_k}{\B_k}^3   \notag\\
            \geq & \left(\frac{1}{6L}\right)^{\frac{1}{2}}{\normM{\nabla f(x_{k+1})}{\B_k}^{*\frac 32}},
        \end{align}
        where for the last inequality, we use $\tfrac{\alpha}{r} + \tfrac{\beta r^3}{3} \geq \frac{4}{3}\beta^{1/4}\alpha^{3/4}$.
    \end{itemize}
    By combing results of these cases and using convexity $f(x_k) - f(x_{k+1}) \geq \la \nabla f(x_{k+1}), x_k + x_{k+1} \ra$ we get desired bound.
\end{proof}

\begin{corollary}
    \label{cor:convergence_neighbourhood}
     Let Assumptions~\ref{as:app_semi-self-concordance},~\ref{as:app_inexactness} hold and $f$ be a convex function. Algorithm~\ref{alg:main} with parameters $\theta =  1 + \alpha \geq 1 + \alpha_\text{max}, ~ L \geq (1+\oalph)^{3/2}\Lsemi$ converges with the rate 
     \begin{equation*}
         f(x_{k+1}) - f(x^*) \leq \frac{270(1+\alpha)^{3/2}\Lsemi \oD^3}{k^2}.
    \end{equation*}  
     until it reaches the region $\|\nabla f(x_{k+1})\|^*_{\B_k} \leq \tfrac{4\alpha^2}{9 L^2(1+\alpha)^{3/2}}$, where 
     \begin{equation} 
     \label{eq:distance_adaptive}
    	\oD \eqdef \max\limits_{k \in [0;K+1]} \left(\|x_{k} - x_{\ast}\|_{\B_k} + \|\nabla f(x_k)\|^*_{\B_k}\right).
	\end{equation}
\end{corollary}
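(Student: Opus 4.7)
The plan is to combine the one-step decrease of Lemma~\ref{lm:main_step_decrease} with convexity, restricted to the ``cubic'' branch of the minimum in~\eqref{eq:main_step_decrease}. Throughout, let $\delta_k \eqdef f(x_k) - f(x^*)$.

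First, I would identify precisely when the cubic branch of Lemma~\ref{lm:main_step_decrease} is active. A direct comparison of the two arguments of the $\min$ shows that
$\tfrac{1}{\sqrt{6L}}\bigl(\|\nabla f(x_{k+1})\|^*_{\B_k}\bigr)^{3/2} \;\leq\; \tfrac{1}{4\alpha}\bigl(\|\nabla f(x_{k+1})\|^*_{\B_k}\bigr)^{2}$
precisely when $\|\nabla f(x_{k+1})\|^*_{\B_k}$ exceeds a threshold of the form $\Theta(\alpha^2/L)$, which matches the exit region stated in the corollary once one substitutes $L = (1+\oalph)^{3/2}\Lsemi$ and absorbs constants. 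For every iterate outside that region, Lemma~\ref{lm:main_step_decrease} therefore collapses to the cleaner bound
\begin{equation*}
    \delta_k - \delta_{k+1} \;\geq\; \frac{1}{\sqrt{6L}}\,\bigl(\|\nabla f(x_{k+1})\|^*_{\B_k}\bigr)^{3/2}.
\end{equation*}

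Second, I would turn this into a self-contained recursion on $\delta_k$ via convexity. Cauchy--Schwarz in the $\B_{k+1}$ norm combined with the definition of $\oD$ in~\eqref{eq:distance_adaptive} gives
\begin{equation*}
    \delta_{k+1} \;\leq\; \langle \nabla f(x_{k+1}),\,x_{k+1}-x^*\rangle \;\leq\; \|\nabla f(x_{k+1})\|^*_{\B_{k+1}}\cdot\|x_{k+1}-x^*\|_{\B_{k+1}} \;\leq\; \oD\cdot\|\nabla f(x_{k+1})\|^*_{\B_{k+1}}.
\end{equation*}
Since the one-step decrease is stated in the $\B_k$ norm, I would then invoke Assumption~\ref{as:app_inexactness} at both $x_k$ and $x_{k+1}$ together with Assumption~\ref{as:app_semi-self-concordance} (to compare $\nabla^2 f(x_k)$ and $\nabla^2 f(x_{k+1})$ along the step) to conclude that $\B_k$ and $\B_{k+1}$ are within a $(1+\alpha)^{O(1)}$-factor of one another. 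This yields $\|\nabla f(x_{k+1})\|^*_{\B_k} \geq c(\alpha)\,\delta_{k+1}/\oD$ for an explicit constant $c(\alpha)$, and plugging back into the displayed one-step decrease produces the cubic recursion
\begin{equation*}
    \delta_k - \delta_{k+1} \;\geq\; \frac{C_\alpha}{\sqrt{L}}\cdot\frac{\delta_{k+1}^{3/2}}{\oD^{3/2}}.
\end{equation*}

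Third, I would close the proof by the standard cubic-rate integration trick: case-split on whether $\delta_{k+1} \geq \tfrac12\delta_k$ or not. In the first case a mean-value argument applied to $x\mapsto x^{-1/2}$ gives a uniform additive lower bound on $\delta_{k+1}^{-1/2}-\delta_k^{-1/2}$; in the second case $\delta_{k+1}^{-1/2}$ already jumps by a multiplicative factor of $\sqrt{2}$. Either way one obtains an additive increment on $\delta_k^{-1/2}$ of order $C_\alpha L^{-1/2}\oD^{-3/2}$. Telescoping over the iterations where the cubic branch is active and substituting $L \leq (1+\oalph)^{3/2}\Lsemi$ gives the claimed bound $f(x_{k+1}) - f(x^*) \leq 270(1+\alpha)^{3/2}\Lsemi\oD^3/k^2$, with the constant $270$ absorbing all the $(1+\alpha)$-factors introduced by the norm-switching and by the case analysis. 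The main technical obstacle I anticipate is precisely the norm-switching step in the second paragraph: relating $\|\nabla f(x_{k+1})\|^*_{\B_k}$ to a quantity appearing in $\oD$ without an uncontrolled dependence on $\|x_{k+1}-x_k\|_{x_k}$; the remaining telescoping is the routine cubic-regularization calculation and only contributes universal constants.
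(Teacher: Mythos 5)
Your overall architecture — isolate the cubic branch of the minimum in Lemma~\ref{lm:main_step_decrease}, convert the gradient norm into a function gap via convexity, then integrate the resulting recursion $\delta_k-\delta_{k+1}\gtrsim\delta_{k+1}^{3/2}$ — is exactly the paper's route (the paper integrates via Lemma~A.1 of \citet{nesterov2019inexact} rather than your case-split telescoping, but these are interchangeable). However, the step you flagged as the main technical obstacle is a genuine gap, and your proposed fix does not work. You apply Cauchy--Schwarz in the $\B_{k+1}$ norm and then try to pass back to the $\B_k$ norm by arguing that $\B_k$ and $\B_{k+1}$ differ by a $(1+\alpha)^{O(1)}$ factor, invoking Assumption~\ref{as:app_inexactness} at both points plus Assumption~\ref{as:app_semi-self-concordance}. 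But semi-strong self-concordance only gives $\|\nabla^2 f(x_{k+1})-\nabla^2 f(x_k)\|_{op}\leq \Lsemi\|x_{k+1}-x_k\|_{x_k}$, a bound proportional to the step length, not a uniform relative spectral bound; far from the solution the step $\|x_{k+1}-x_k\|_{\B_k}=\eta_k\|\nabla f(x_k)\|^*_{\B_k}$ can be large, so no constant $c(\alpha)$ exists and the recursion you write down is not established.

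The fix is to never leave the $\B_k$ geometry: Cauchy--Schwarz holds in \emph{any} fixed positive-definite norm, so write
\begin{equation*}
    f(x_{k+1})-f(x^*)\;\leq\;\langle\nabla f(x_{k+1}),x_{k+1}-x^*\rangle\;\leq\;\|\nabla f(x_{k+1})\|^*_{\B_k}\,\|x_{k+1}-x^*\|_{\B_k},
\end{equation*}
and then bound $\|x_{k+1}-x^*\|_{\B_k}\leq\|x_k-x^*\|_{\B_k}+\eta_k\|\nabla f(x_k)\|^*_{\B_k}\leq\oD$ using the update rule and $\eta_k\leq 1$. This is precisely why $\oD$ in~\eqref{eq:distance_adaptive} carries the extra gradient term $\|\nabla f(x_k)\|^*_{\B_k}$ — it exists to absorb the displacement of $x_{k+1}$ measured in the \emph{previous} iterate's norm, making any $\B_k$-versus-$\B_{k+1}$ comparison unnecessary. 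One further small omission: to land on the clean constant $270$, the paper must also bound the initial gap $f(x_1)-f(x^*)\leq 24(1+\alpha)^{3/2}\Lsemi\oD^3$ (by applying the one-step decrease and the same Cauchy--Schwarz argument at $k=0$), since the sequence lemma's bound carries a $(1+\xi_0^{1/2})$ factor; your telescoping variant would need an analogous control of the starting value rather than silently absorbing it into the constant.
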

\begin{proof}
    Let us assume that $ \tfrac{1}{4\alpha }\|\nabla f(x_{k+1})\|^{*2}_{\B_k} \geq  
    \left( \tfrac{1}{6L}\right)^\frac{1}{2} \|\nabla f(x_{k+1})\|_{{\B_k}}^{*\frac 32}$. Then, $\|\nabla f(x_{k+1})\|^*_{{\B_k}} \geq \tfrac{8}{3}\tfrac{\alpha^2}{L} \geq \tfrac{8}{3}\tfrac{\alpha^2}{(1+\alpha)^{3/2}\Lsemi}$.
    Then, by Lemma~\ref{lm:main_step_decrease}
    \begin{equation*}
         f(x_{k}) - f(x_{k+1})
        \geq 
        \tfrac{1}{\sqrt{6L}} \|\nabla f(x_{k+1})\|^{*\frac 32}_{{\B_k}} 
    \end{equation*}
    By convexity, we get
    \begin{align*}
        f(x^*)\geq f(x_{t+1}) + \langle \nabla f(x_{t+1}), x^{*} - & x_{t+1} \rangle\geq f(x_{t+1}) - \normMd{\nabla f(x_{k+1})}{\B_k}  \normM{x^* - x_{k+1}}{\B_k}.
    \end{align*}
    Hence, 
    \begin{equation}
        \label{eq:adaptive_thm_grad_1}
        \normMd{\nabla f(x_{k+1})}{\B_k} \geq \frac{f(x_{t+1}) - f(x^{\ast})}{\normM{x^* - x_{k+1}}{\B_k}}.
    \end{equation}
    By the definition of CEQN step, $\eta_k \leq 1$, and \eqref{eq:distance_adaptive}
    \begin{align*}
        \normM{x^* - x_{k+1}}{\B_k} = \normM{x^* - x_{k} + \eta_k \B_k^{-1}\nabla f(x_k)}{\B_k} \leq \normM{x^* - x_{k}}{\B_k} + \normMd{\nabla f(x_k)}{\B_k} \leq \oD.
    \end{align*}
    Then,
    \begin{equation*}
      f(x_k)-f(x_{k+1}) \geq \left( \tfrac{f(x_{k+1}) - f(x^{\ast})}{\oD}\right)^{3/2}\left( \tfrac{1}{6L}\right)^{1/2} \geq \left( \tfrac{f(x_{k+1}) - f(x^{\ast})}{\oD}\right)^{3/2}\left( \tfrac{1}{6\Lsemi(1+\alpha)^{3/2}}\right)^{1/2}.
    \end{equation*}

    By setting $\xi_k = \tfrac{f(x_k)-f(x^*)}{6\Lsemi(1+\alpha)^{3/2}}$ we get the following condition
    \begin{equation*}
      \xi_k - \xi_{k+1} \geq \xi_{k+1}^{3/2}.
    \end{equation*}
    
    In \cite{nesterov2019inexact}[Lemma A.1] it is shown that if a non–negative sequence $\{\xi_t\}$ satisfies for $\beta > 0$
    \begin{equation*}
        \xi_k-\xi_{k+1}\geq\xi_{k+1}^{1+\beta}
    \end{equation*}
    then for all $k\geq 0$
    \begin{equation}
        \label{eq:nesterov_sequence_trick_1}\xi_k\leq\Bigl[\bigl(1+\tfrac1\beta\bigr)\bigl(1+\xi_0^{\beta}\bigr)\tfrac1k\Bigr]^{1/\beta}.
    \end{equation}

    Then, by~\eqref{eq:nesterov_sequence_trick_1} we get 
    \begin{equation*}
        \xi_{k+1} \leq \ls 3\left( 1+\tfrac{f(x_1) - f(x^*)}{6\Lsemi \oD^3(1+\alpha)^{3/2}}\right)\tfrac1k\rs^{2}.
    \end{equation*}

    Therefore, 
    \begin{equation}
        \label{eq:adaptive_case1_almost_1}
        f(x_{k+1}) - f(x^*) \leq \frac{54 (1+\alpha)^{3/2} \Lsemi \oD^3}{k^2} + \frac{9}{k^2}(f(x_1) - f(x^*)).
    \end{equation}
    Now, we consider the second term
    \begin{align}
        \label{eq:adaptive_case1_grad_bnd_1}
        \left(\tfrac{1}{6\Lsemi(1+\alpha)^{3/2}}\right)^{1/2} {\normMd{\nabla f(x_{1})}{\B_0}}^{3/2} & \stackrel{\eqref{eq:adaptive_thm_grad_1}}{\leq} \langle \nabla f(x_{1}), x_0 - x_{1} \rangle \leq \normMd{\nabla f(x_{1})}{\B_0} \normM{ x_0 - x_{1}}{\B_0} \notag \\
        & \leq \normMd{\nabla f(x_{1})}{\B_0} (\normM{ x_0 - x^*}{\B_0} + \normM{ x_0 - \eta_k \B_0^{-1} \nabla f(x_0) - x^*}{\B_0}) \notag\\
        & \leq \normMd{\nabla f(x_{1})}{\B_0} (\normM{ x_0 - x^*}{\B_0} + \normM{ x_0 - x^*}{\B_0} + \normMd{ \nabla f(x_0)}{\B_0}) \notag \\
        & \leq 2 \normMd{\nabla f(x_1)}{\B_0} \oD.
    \end{align}

    Next, by convexity, we get
    \begin{align}
        f(x_1) - f(x^*) \leq \oD \normMd{\nabla f(x_1)}{\B_0} \stackrel{\eqref{eq:adaptive_case1_grad_bnd_1}}{\leq} 24 (1+\alpha)^{3/2}\Lsemi\oD^3.
    \end{align}

    And by using~\eqref{eq:adaptive_case1_almost_1}, we obtain convergence rate 
    \begin{equation*}
         f(x_{k+1}) - f(x^*) \leq \frac{54 (1+\alpha)^{3/2} \Lsemi \oD^3}{k^2} + \frac{9}{k^2}(f(x_1) - f(x^*)) \leq \frac{270(1+\alpha)^{3/2}\Lsemi \oD^3}{k^2}.
    \end{equation*}   
\end{proof}

\begin{corollary}
    \label{cor:controllable}
    Let Assumptions~\ref{as:app_semi-self-concordance} and~\ref{as:app_inexactness} hold, and let $f$ be a convex function. Suppose Algorithm~\ref{alg:main} is run with parameters $\theta_k = 1 + \alpha_k \geq 1 + \alpha_{\max}$ and $L \geq (1 + \oalph)^{3/2} \Lsemi$. If the inexactness satisfies $
    \alpha_k \leq L \|x_{k+1} - x_k\|_{\B_k},$
    then Algorithm~\ref{alg:main} achieves the convergence rate
    \begin{equation*}
         f(x_{k+1}) - f(x^*) \leq \frac{270(1+\alpha)^{3/2}\Lsemi \oD^3}{k^2}.
    \end{equation*}
\end{corollary}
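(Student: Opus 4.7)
The core observation is that the condition $\alpha_k \leq L\|x_{k+1} - x_k\|_{\B_k}$ is precisely the case distinction used in the proof of Lemma~\ref{lm:main_step_decrease}: whenever this inequality holds, we are in the ``cubic branch'' of the minimum in \eqref{eq:main_step_decrease}, so the per-iteration decrease is the stronger cubic one,
\[
f(x_k) - f(x_{k+1}) \geq \left(\frac{1}{6L}\right)^{1/2} \|\nabla f(x_{k+1})\|_{\B_k}^{*\,3/2}.
\]
Thus the hypothesis of the corollary guarantees that the method stays in the ``cubic phase'' at \emph{every} iteration, not just until it reaches a neighborhood of the optimum as in Corollary~\ref{cor:convergence_neighbourhood}.

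Given this, the remainder of the argument is a carbon copy of the proof of Corollary~\ref{cor:convergence_neighbourhood}. I would first use convexity to pass from the gradient-norm decrease to a function-value decrease: from $f(x_{k+1}) - f(x^*) \leq \|\nabla f(x_{k+1})\|_{\B_k}^* \cdot \|x_{k+1}-x^*\|_{\B_k}$ together with the bound $\|x_{k+1}-x^*\|_{\B_k} \leq \oD$ (obtained by splitting $x_{k+1} = x_k - \eta_k \Bi_k\nabla f(x_k)$, noting $\eta_k \leq 1$, and using \eqref{eq:distance_adaptive}), one obtains a lower bound on $\|\nabla f(x_{k+1})\|_{\B_k}^*$ in terms of $f(x_{k+1}) - f(x^*)$. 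Substituting into the cubic decrease and normalizing yields the recurrence
\[
\xi_k - \xi_{k+1} \geq \xi_{k+1}^{3/2}, \qquad \xi_k \eqdef \tfrac{f(x_k) - f(x^*)}{6\Lsemi (1+\alpha)^{3/2}\oD^3},
\]
to which I would apply Lemma~A.1 of \cite{nesterov2019inexact} with $\beta = 1/2$.

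Finally, to close the bound I would handle the first iteration exactly as in Corollary~\ref{cor:convergence_neighbourhood}: the one-step decrease at $k=0$ combined with $\|x_0 - x_1\|_{\B_0} \leq 2\oD$ gives $\|\nabla f(x_1)\|_{\B_0}^* \leq 24 L \oD^2$, and convexity then produces $f(x_1) - f(x^*) \leq 24(1+\alpha)^{3/2}\Lsemi \oD^3$. Plugging this initial bound into the sequence estimate collapses the final constant to $270$, which is the claimed rate.

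The only delicate point is logical rather than computational: the hypothesis $\alpha_k \leq L\|x_{k+1}-x_k\|_{\B_k}$ depends on $x_{k+1}$, which is itself generated by the update using $\alpha_k$, so the condition must be read as a verifiable property of the trajectory the algorithm actually produces (as the main text emphasizes after the corollary). With that reading the argument is non-circular, and the hard work -- isolating the cubic branch of \eqref{eq:main_step_decrease} and running Nesterov's sequence trick -- has already been done in the preceding lemma and corollary.
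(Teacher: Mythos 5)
Your proposal matches the paper's proof essentially verbatim: the paper likewise observes that the condition $\alpha_k \leq L\|x_{k+1}-x_k\|_{\B_k}$ forces the cubic branch of the case analysis in Lemma~\ref{lm:main_step_decrease}, yielding $f(x_k)-f(x_{k+1}) \geq (6L)^{-1/2}\|\nabla f(x_{k+1})\|_{\B_k}^{*\,3/2}$ at every iteration, and then invokes the argument of Corollary~\ref{cor:convergence_neighbourhood} (convexity, the $\oD$ bound, and Nesterov's sequence lemma) with $\alpha = \max_k \alpha_k$ to conclude. Your added remark on the non-circularity of the trajectory-dependent hypothesis is a fair clarification but does not change the route.
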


\begin{proof}
    If $\alpha_k \leq L\|x_{k+1} - x_k\|_{\B_k}$, then following the proof of Lemma~\ref{lm:main_step_decrease}, we arrive at 
    \begin{equation*}
         f(x_{k}) - f(x_{k+1})
        \geq 
        \tfrac{1}{\sqrt{6L}} \|\nabla f(x_{k+1})\|^{*\frac 32}_{{\B_k}}.
    \end{equation*}
    Then, directly by the proof of Lemma~\ref{cor:convergence_neighbourhood} we achieve the desired bound with $\alpha = \max \limits_{k} \alpha_k$.
    
\end{proof}
\subsection{Adaptive Method}

\begin{algorithm} 
    
    \caption{Adaptive Cubically Enhanced Quasi-Newton Method} 
    \begin{algorithmic}[1]
    \label{alg:adaptive}
        \STATE \textbf{Requires:} Initial point $x_0 \in \R^d$, constant $L$ s.t. $L \geq 2L>0$, initial inexactness $\alpha_0 > 0$, increase multiplier $\gamma_{inc} > 1$.
        \FOR {$k=0,1,\ldots, K$}
            \STATE Calculate stepsize
                \begin{equation}
                    \label{eq:algo_stepsize}
                    \eta_k = \tfrac{2}{(1+ \alpha_k) + \sqrt{(1+\alpha_k)^2 + (1+\alpha_k)^{3/2}L\normMd{\nabla f(x_k)}{\Bi_k}}}
                \end{equation}
            \STATE Perform Quasi-Newton step 
                \begin{equation}
                    \label{eq:algo_step}
                     x_{k+1} = x_k - \eta_k\Bi_k\nabla f(x_k)
                \end{equation}
            \WHILE{$\langle \nabla f(x_{t+1}), x_k - x_{k+1} \rangle 
            \leq \min \left\{ \tfrac{\|\nabla f(x_{k+1})\|_{\B_k}^{*2}}{4\alpha_k}, 
            \tfrac{\|\nabla f(x_{t+1})\|_{\B_k}^{*\frac 32}}{(6(1+\alpha_k)^{3/2}L)^{1/2}}\right\}$} \label{alg:condition}
             \STATE $\alpha_k= \alpha_k\gamma_{inc}$
             \STATE Calculates stepsize $\eta_k$~\eqref{eq:algo_stepsize} with updated $\alpha_k$
             \STATE Perform Quasi-Newton step~\eqref{eq:algo_step} with updated $\eta_k$  
            \ENDWHILE
        \ENDFOR
        \STATE \textbf{Return:} $x_{T+1}$
    \end{algorithmic}
\end{algorithm}
% \begin{assumption} \label{as:inexactness_direction}
%     For a function $f(x)$ and points $x, ~y \in \mathbb{R}^d$, a positive semidefinite matrix $B_x \in \mathbb{R}^{d \times d}$ is considered a $(\ualph_x^y, \oalph_x^y)$-relative inexact Hessian if it satisfies the inequality
%     \begin{equation}
%         \label{eq:inexactness_direction}
%             (1-\ualph_x^y) \|x-y\|_{\B_x} \preceq \|x-y\|_x \preceq (1+\oalph_x^y) \|x-y\|_{\B_x}.
%         \end{equation}
% \end{assumption}

% TODO: remark about comparison with Assumption 3. 

% \begin{lemma}
%     \label{lm:aceqn_step_improvement}
%     Let Assumptions~\ref{as:semi-self-concordance},~\ref{as:inexactness} hold and $f(x)$ be convex function. Quasi-Newton methods with CEQN stepsize with parameters $\theta \eqdef 1 + \delta \geq 1 + \alpha \eqdef 1 + \max\{\ualph, \oalph\}, ~ L \geq \Lsemi(1+\oalph)^{3/2}$ implies the following one-step decrease  
%     \begin{equation}
%         \label{eq:aceqn_step_improvement}
%         \langle \nabla f(x_{k+1}), x_k - x_{k+1} \rangle 
%         \geq 
%          \min \left\{ \left( \tfrac{1}{4\delta }\right){\normMd{\nabla f(x_{k+1})}{\B_k}}^2, 
%         \left( \tfrac{1}{6L}\right)^\frac{1}{2} {\normMd{\nabla f(x_{k+1})}{\B_k}}^{3/2}\right \}.
%     \end{equation}
% \end{lemma}
% \art{check maybe $\theta_k$ and $\delta_k$ are needed here}
% \art{a bit of missmatch on definition of $L$ compared with the theorem. Here $L \geq 2\Lsemi(1+\alpha)^{3/2}$. In theorem without $\alpha$}

\begin{theorem}
    Let Assumptions~\ref{as:app_semi-self-concordance},~\ref{as:app_inexactness}. After $K+1$ iterations of Algorithm~\eqref{alg:adaptive} with parameters $L \geq 2\Lsemi,~\alpha_0 > 0,~\gamma_{inc} > 0$. Let $\e > 0$ be the desired solution accuracy. Then after 
    \begin{equation*}
        K = O\left(\frac{\alpha_K\oD^2}{\e} + \frac{(1+\alpha_K)^{3/2}L\oD^3}{\sqrt{\e}} + \log_{\gamma_{\text{inc}}} \left( \frac{\alpha_{\max}}{\alpha_0} \right) \right)
    \end{equation*}
    iterations of Algorithm~\ref{alg:adaptive} $x_K$ is an $\e$-solution, i.e. $f(x_K) - f(x^*) \leq \e$.  
\end{theorem}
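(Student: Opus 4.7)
The plan is to split the total work into two components: the extra ``inner'' Quasi-Newton attempts spent increasing $\alpha_k$ until the acceptance test in Line~\ref{alg:condition} is satisfied, and the outer iterations required to drive $f(x_k)-f(x^*)$ below $\e$ using the one-step decrease guaranteed once the test passes. I would first bound the inner-loop work. The acceptance test is precisely the one-step decrease established in Lemma~\ref{lm:main_step_decrease}, whose hypotheses require $\alpha_k \geq \alpha_{\max}$ and an effective cubic constant of at least $(1+\oalph)^{3/2}\Lsemi$. Because the stepsize~\eqref{eq:algo_stepsize} embeds the factor $(1+\alpha_k)^{3/2}L$ and $L \geq 2\Lsemi$, the hypothesis is met automatically once $\alpha_k$ exceeds $\alpha_{\max}$; since each failure multiplies $\alpha_k$ by $\gamma_{\text{inc}}>1$ and $\alpha_{k+1}$ is carried forward unchanged (so $\alpha_k$ is non-decreasing), the telescoping count of inner increases summed across all outer iterations is bounded by $\log_{\gamma_{\text{inc}}}(\alpha_K/\alpha_0)$, which is the third term of the claimed rate.

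Next, whenever the acceptance test holds at iteration $k$, combining it with convexity $f(x_k)-f(x_{k+1})\geq \langle \nabla f(x_{k+1}), x_k - x_{k+1}\rangle$ yields
\begin{equation*}
    f(x_k) - f(x_{k+1}) \;\geq\; \min\!\left\{ \tfrac{\|\nabla f(x_{k+1})\|_{\B_k}^{*2}}{4\alpha_k},\; \tfrac{\|\nabla f(x_{k+1})\|_{\B_k}^{*3/2}}{\sqrt{6(1+\alpha_k)^{3/2}L}} \right\}.
\end{equation*}
Applying convexity a second time, together with $\|x_{k+1}-x^*\|_{\B_k}\leq \oD$ from~\eqref{eq:distance_adaptive} (mimicking the gradient lower-bound step in the proof of Corollary~\ref{cor:convergence_neighbourhood}), gives $\|\nabla f(x_{k+1})\|^*_{\B_k} \geq (f(x_{k+1})-f(x^*))/\oD$. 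Writing $\delta_k := f(x_k)-f(x^*)$ and exploiting monotonicity $\alpha_k \leq \alpha_K$ to pull the parameters out of the recursion, I obtain the two-branch inequality
\begin{equation*}
    \delta_k - \delta_{k+1} \;\geq\; \min\!\left\{ \tfrac{\delta_{k+1}^2}{4\alpha_K\oD^2},\; \tfrac{\delta_{k+1}^{3/2}}{\sqrt{6(1+\alpha_K)^{3/2}L}\,\oD^{3/2}} \right\}.
\end{equation*}

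The outer count then follows from a standard two-phase analysis. While the cubic branch is the binding one, the sequence lemma of~\citet{nesterov2019inexact} that was already invoked in the proof of Corollary~\ref{cor:convergence_neighbourhood} yields $\delta_k = \cO((1+\alpha_K)^{3/2}L\oD^3/k^2)$, so this phase terminates in $\cO((1+\alpha_K)^{3/2}L\oD^3/\sqrt{\e})$ outer iterations. After the crossover the quadratic branch produces the classical gradient-descent-type recursion $\delta_k-\delta_{k+1}\geq \delta_{k+1}^2/(4\alpha_K\oD^2)$, whose solution is $\delta_k = \cO(\alpha_K\oD^2/k)$ and hence costs at most $\cO(\alpha_K\oD^2/\e)$ additional steps. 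Summing the two phase lengths with the inner-loop budget delivers the stated complexity.

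The main obstacle I anticipate is executing the two-phase argument cleanly while $\alpha_k$ varies: although monotonicity permits the uniform upper bound $\alpha_k \leq \alpha_K$ in every recursion, one must still argue that the transition from the cubic to the quadratic regime is one-directional, so that the two phase counts simply add rather than interleave. This is handled by the combined monotonicity of $\delta_k$ (which is non-increasing by Lemma~\ref{lm:main_step_decrease}) and $\alpha_k$ (non-decreasing by construction), both of which shift the crossover threshold $\delta \sim \alpha_K^2/[(1+\alpha_K)^{3/2}L]$ only in the direction that favors the quadratic regime. A secondary check is that $L \geq 2\Lsemi$ is truly sufficient to invoke Lemma~\ref{lm:main_step_decrease}, which nominally demands $L \geq (1+\oalph)^{3/2}\Lsemi$; this is resolved by noting that the stepsize formula absorbs an additional $(1+\alpha_k)^{3/2}$ into the effective cubic constant, which exceeds $(1+\oalph)^{3/2}\Lsemi$ whenever $\alpha_k \geq \oalph$, as ensured by inner-loop termination.
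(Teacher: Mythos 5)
Your proposal is correct and follows essentially the same route as the paper's own proof: bounding the total inner-loop work by a telescoping $\log_{\gamma_{\text{inc}}}$ count, converting the acceptance test into the two-branch decrease via convexity, lower-bounding $\|\nabla f(x_{k+1})\|^*_{\B_k}$ by $(f(x_{k+1})-f(x^*))/\oD$, and running the two-phase (cubic then quadratic) analysis with the one-directional crossover justified by the joint monotonicity of $\delta_k$ and $\alpha_k$. The only cosmetic difference is writing $\alpha_K$ in place of $\alpha_{\max}$ inside the logarithm, which changes the bound by at most a single factor of $\gamma_{\text{inc}}$.
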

\begin{proof}
    By Lemma~\ref{lm:main_step_decrease}, the termination condition on Line~4 of Algorithm~\ref{alg:adaptive} is guaranteed to be satisfied after a finite number of backtracking steps. Specifically, the number of inner iterations is bounded by 
    $
    \log_{\gamma_{\text{inc}}} \left( \frac{\alpha_{\max}}{\alpha_0} \right).
    $
    Denoting $L_k = (1+\alpha_k)^{3/2}L$, we obtain the following bound for each iteration:
    \begin{equation}
        \label{eq:aceqn_step_improvement_adaptive}
        \langle \nabla f(x_{k+1}), x_k - x_{k+1} \rangle 
        \geq 
         \min \left\{ \left( \tfrac{1}{4\alpha_k}\right){\normsMd{\nabla f(x_{k+1})}{\B_k}}, 
        \left( \tfrac{1}{6L_k}\right)^\frac{1}{2} \normM{\nabla f(x_{k+1})}{\B_k}^{*\frac 32}\right \}.
    \end{equation}
    Since $f(x)$ is convex
    \begin{align}
        \label{eq:adaptive_f_progress_bound}
        f(x_t)-f(x_{t+1})
        \geq  & \langle \nabla f(x_{t+1}),  x_{t} -  x_{t+1}  \rangle  \notag\\
        \stackrel{\eqref{eq:aceqn_step_improvement_adaptive}}{\geq} & 
         \min \left\{ \left( \tfrac{1}{4\alpha_k}\right){\normsMd{\nabla f(x_{k+1})}{\B_k}}, 
        \left( \tfrac{1}{6L_k}\right)^\frac{1}{2} \normM{\nabla f(x_{k+1})}{\B_k}^{* \frac 32}\right \} 
        \geq 0.
    \end{align}
    Thus, the method produces a monotonically non-increasing sequence of function values, with strict decrease whenever $\nabla f(x_{k+1}) \neq 0$. Once $\nabla f(x_k) = 0$, we have $x_{k+1} = x^*$ and the method converged. 
    Furthermore, by convexity, we get
    \begin{align*}
        f(x^*)\geq f(x_{t+1}) + \langle \nabla f(x_{t+1}), x^{*} - & x_{t+1} \rangle\geq f(x_{t+1}) - \normMd{\nabla f(x_{k+1})}{\B_k}  \normM{x^* - x_{k+1}}{\B_k}.
    \end{align*}
    Hence, 
    \begin{equation}
        \label{eq:adaptive_thm_grad}
        \normMd{\nabla f(x_{k+1})}{\B_k} \geq \frac{f(x_{t+1}) - f(x^{\ast})}{\normM{x^* - x_{k+1}}{\B_k}}.
    \end{equation}
    By the definition of CEQN step, $\eta_k \leq 1$, and \eqref{eq:distance_adaptive}
    \begin{align*}
        \normM{x^* - x_{k+1}}{\B_k} = \normM{x^* - x_{k} + \eta_k \B_k^{-1}\nabla f(x_k)}{\B_k} \leq \normM{x^* - x_{k}}{\B_k} + \normMd{\nabla f(x_k)}{\B_k} \leq \oD.
    \end{align*}
    Then, we get  $\normM{x^* - x_{k+1}}{\B_k} \geq \tfrac{f(x_{t+1}) - f(x^*)}{\oD}$. Therefore, by combining with~\eqref{eq:adaptive_f_progress_bound}, we have
    \begin{equation}
        \label{eq:adaptive_regimes}
        f(x_k)-f(x_{k+1})
        \geq \min \left\{ \left(\tfrac{f(x_{k+1}) - f(x^{\ast})}{\oD}\right)^2\left( \tfrac{1}{4\alpha_k }\right), \left( \tfrac{f(x_{k+1}) - f(x^{\ast})}{\oD}\right)^\frac{3}{2}
        \left( \tfrac{1}{6L_k}\right)^\frac{1}{2}\right\}.
    \end{equation}
    Next, we aim to show that Quasi-Newton methods with the Adaptive CEQN stepsize exhibit two convergence regimes, with at most one switch between them.

    We begin by analyzing the case when the minimum on the right-hand side of~\eqref{eq:adaptive_regimes} is attained by the second term. This occurs when
    \begin{equation*}
        f(x_{k+1}) - f(x^*) \geq \frac{8}{3}\frac{\alpha_k^2 \oD}{L_k} = \frac{8}{3}\frac{\alpha_k^2 \oD}{(1+\alpha_k)^{3/2}L}.
    \end{equation*}
    Note that $\alpha_k$
    is monotonically increasing by the design of the algorithm. Therefore, the right-hand side of the inequality is also increasing in $k$, while the left-hand side, $ f(x_{t+1}) - f(x^*)$, is monotonically decreasing. Consequently, the inequality can be violated at most once, implying that the switch between regimes can occur only once. Let us denote number of iteration in the first regime as $K_1 \geq 0$, $K_2 \geq 0$ in the second regime, and $K = K_1 + K_2$ total number of iterations of outer step of the method. Total number of Quasi-Newton method with CEQN stepsize would be $K + \log_{\gamma_{\text{inc}}} \left( \frac{\alpha_{\max}}{\alpha_0} \right)$.  
    
    At first, Algorithm~\ref{alg:adaptive} performs $K_1 \geq 0$ iterations with the following guarantee:
    \begin{equation*}
      f(x_k)-f(x_{k+1}) \geq \left( \tfrac{f(x_{k+1}) - f(x^{\ast})}{\oD}\right)^{3/2}\left( \tfrac{1}{6L_k}\right)^{1/2} \geq \left( \tfrac{f(x_{k+1}) - f(x^{\ast})}{\oD}\right)^{3/2}\left( \tfrac{1}{6L(1+\alpha_K)^{3/2}}\right)^{1/2}.
    \end{equation*}
    Then, we have 
    \begin{equation*}
      f(x_k)-f(x_{k+1}) \geq \left( \tfrac{f(x_{k+1}) - f(x^{\ast})}{\oD}\right)^{3/2}\left( \tfrac{1}{6L_k}\right)^{1/2} \geq \left( \tfrac{f(x_{k+1}) - f(x^{\ast})}{\oD}\right)^{3/2}\left( \tfrac{1}{6L(1+\alpha_K)^{3/2}}\right)^{1/2}.
    \end{equation*}
    By setting $\xi_k = \tfrac{f(x_k)-f(x^*)}{6L(1+\alpha_K)^{3/2}}$ we get the following condition
    \begin{equation*}
      \xi_k - \xi_{k+1} \geq \xi_{k+1}^{3/2}.
    \end{equation*}
    
    In \cite{nesterov2019inexact}[Lemma A.1] it is shown that if a non–negative sequence $\{\xi_t\}$ satisfies for $\beta > 0$
    \begin{equation*}
        \xi_k-\xi_{k+1}\geq\xi_{k+1}^{1+\beta}
    \end{equation*}
    then for all $k\geq 0$
    \begin{equation}
        \label{eq:nesterov_sequence_trick}\xi_k\leq\Bigl[\bigl(1+\tfrac1\beta\bigr)\bigl(1+\xi_0^{\beta}\bigr)\tfrac1k\Bigr]^{1/\beta}.
    \end{equation}

    Then, by~\eqref{eq:nesterov_sequence_trick} we get for $k \in [0, K_1]$
    \begin{equation*}
        \xi_{k+1} \leq \ls 3\left( 1+\tfrac{f(x_1) - f(x^*)}{6L\oD^3(1+\alpha_k)^{3/2}}\right)\tfrac1k\rs^{2}.
    \end{equation*}

    Therefore, 
    \begin{equation}
        \label{eq:adaptive_case1_almost}
        f(x_{k+1}) - f(x^*) \leq \frac{54 (1+\alpha_K)^{3/2} L\oD^3}{k^2} + \frac{9}{k^2}(f(x_1) - f(x^*)).
    \end{equation}

    \begin{align}
        \label{eq:adaptive_case1_grad_bnd}
        \left(\tfrac{1}{6L(1+\alpha_K)^{3/2}}\right)^{1/2} \normM{\nabla f(x_{1})}{\B_0}^{*\frac 32} & \stackrel{\eqref{eq:adaptive_thm_grad}}{\leq} \langle \nabla f(x_{1}), x_0 - x_{1} \rangle \leq \normMd{\nabla f(x_{1})}{\B_0} \normM{ x_0 - x_{1}}{\B_0} \notag \\
        & \leq \normMd{\nabla f(x_{1})}{\B_0} (\normM{ x_0 - x^*}{\B_0} + \normM{ x_0 - \eta_k \B_0^{-1} \nabla f(x_0) - x^*}{\B_0}) \notag\\
        & \leq \normMd{\nabla f(x_{1})}{\B_0} (\normM{ x_0 - x^*}{\B_0} + \normM{ x_0 - x^*}{\B_0} + \normMd{ \nabla f(x_0)}{\B_0}) \notag \\
        & \leq 2 \normMd{\nabla f(x_1)}{\B_0} \oD.
    \end{align}

    Next, by convexity, we get
    \begin{align}
        f(x_1) - f(x^*) \leq \oD \normMd{\nabla f(x_1)}{\B_0} \stackrel{\eqref{eq:adaptive_case1_grad_bnd}}{\leq} 24 (1+\alpha_K)^{3/2}L\oD^3.
    \end{align}

    And by using~\eqref{eq:adaptive_case1_almost}, we obtain convergence rate 
    \begin{equation*}
         f(x_{k+1}) - f(x^*) \leq \frac{54 (1+\alpha_K)^{3/2} L\oD^3}{k^2} + \frac{9}{k^2}(f(x_1) - f(x^*)) \leq \frac{270(1+\alpha_K)^{3/2}L\oD^3}{k^2}.
    \end{equation*}
    Equivalently, 
    \begin{equation*}
        K_1 = O \left( \frac{(1+\alpha_K)^{3/2}L\oD^3}{\sqrt{\e}} \right).
    \end{equation*}

    After $K_1$ iterations, if the target accuracy has not yet been achieved, the method transitions into the second regime. From~\eqref{eq:adaptive_regimes}, similar to the first regime, for $k \in [K_1, K]$, we get
    \begin{equation}
        \xi_k - \xi_{k+1} \geq \xi_{k+1}^2,
    \end{equation}
    with $\xi_k = \frac{f(x_k) - f(x^*)}{4\alpha_K \oD^2}$. 

    Applying~\eqref{eq:nesterov_sequence_trick}, we get
    \begin{equation}
        \label{eq:adaptive_case2_almost}
        f(x_{K}) - f(x^{\ast}) \leq   \left( 4\alpha_{K} \oD^2 + f(x_{K_1}) - f(x^{*})\right)  \tfrac{2}{K_2}
    \end{equation}

    \begin{align}
        \label{eq:adaptive_case2_grad_bnd}
        &\left(\tfrac{1}{4\alpha_K}\right)^{1/2}  {\normMd{\nabla f(x_{K_1})}{\B_{K_1 - 1}}}^{3/2}  \\
        & \stackrel{\eqref{eq:adaptive_thm_grad}}{\leq} \langle \nabla f(x_{K_1}), x_{K_1-1} - x_{K_1} \rangle \leq \normMd{\nabla f(x_{K_1})}{\B_{K_1-1}} \normM{ x_{K_1-1} - x_{K_1}}{\B_{K_1-1}} \notag \\
        & \leq \normMd{\nabla f(x_{1})}{\B_{K_1-1}} \left(\normM{ x_{K_1-1} - x^*}{\B_{K_1-1}} + \normM{ x_{K_1-1} - \eta_k \B_{K_1-1}^{-1} \nabla f(x_{K_1-1}) - x^*}{\B_{K_1-1}}\right) \notag\\
        & \leq \normMd{\nabla f(x_{K_1})}{\B_{K_1-1}} \left(\normM{ x_{K_1-1} - x^*}{\B_{K_1-1}} + \normM{ x_{K_1-1} - x^*}{\B_{K_1-1}} + \normMd{ \nabla f(x_{K_1-1})}{\B_{K_1-1}}\right) \notag \\
        & \leq 2 \normMd{\nabla f(x_{K_1})}{\B_{K_1-1}} \oD.
    \end{align}

    Next, by convexity, we get
    \begin{align}
        f(x_{K_1}) - f(x^*) \leq \oD \normMd{\nabla f(x_{K_1})}{\B_{K_1-1}} \stackrel{\eqref{eq:adaptive_case1_grad_bnd}}{\leq} 8 \alpha_K\oD^2.
    \end{align}

    And by using~\eqref{eq:adaptive_case2_almost}, we obtain convergence rate 
    \begin{equation*}
         f(x_{K}) - f(x^*) \leq \frac{24 \alpha_K\oD^2}{k} 
    \end{equation*}
    Equivalently, 
    \begin{equation*}
        K_2 = O \left( \frac{\alpha_K\oD^2}{\e} \right).
    \end{equation*}

    Thus, total number of iterations is 
    \begin{equation*}
        K_1 + K_2 + \log_{\gamma_{\text{inc}}} \left( \frac{\alpha_{\max}}{\alpha_0} \right) = O\left(\frac{\alpha_K\oD^2}{\e} + \frac{(1+\alpha_K)^{3/2}L\oD^3}{\sqrt{\e}} + \log_{\gamma_{\text{inc}}} \left( \frac{\alpha_{\max}}{\alpha_0} \right) \right)
    \end{equation*}   
\end{proof}

\begin{lemma}
    Let Assumptions~\ref{as:app_semi-self-concordance},~\ref{as:app_inexactness} hold. QN step with CEQN stepsize and with parameters $\theta \geq 1 + \alpha_\text{max}, ~ L \geq (1+\oalph)^{3/2}\Lsemi$ implies one-step decrease
    \begin{equation}
        f(x_{k+1}) \leq f(x_k) - \tfrac{1}{2}\eta_k\lr\|\nabla f(x_k)\|_{\B_k}^*\rr^2 - \tfrac{L}{6}\eta_k^3\lr\|\nabla f(x_k)\|_{\B_k}^*\rr^3
    \end{equation}
\end{lemma}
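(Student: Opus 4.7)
The plan is to combine the upper-bound from Lemma~\ref{lem:app_model_bounds} with the defining first-order relation of the CEQN stepsize. First, apply inequality~\eqref{eq:model_difference_up} at $y = x_{k+1}$, $x = x_k$ to get
\begin{equation*}
    f(x_{k+1}) \leq \oQ_f(x_{k+1}; x_k) + \tfrac{\oalph}{2}\|x_{k+1} - x_k\|_{\B_k}^2 + \tfrac{(1+\oalph)^{3/2}\Lsemi}{6}\|x_{k+1} - x_k\|_{\B_k}^3.
\end{equation*}
Using the parameter conditions $\theta \geq 1 + \alpha_{\max} \geq 1 + \oalph$ and $L \geq (1+\oalph)^{3/2}\Lsemi$, I would absorb the correction terms into the quadratic model and obtain the simpler bound
\begin{equation*}
    f(x_{k+1}) \leq f(x_k) + \la \nabla f(x_k), x_{k+1} - x_k\ra + \tfrac{\theta}{2}\|x_{k+1}-x_k\|_{\B_k}^2 + \tfrac{L}{6}\|x_{k+1}-x_k\|_{\B_k}^3.
\end{equation*}

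Second, evaluate this bound along the CEQN direction $x_{k+1} - x_k = -\eta_k \B_k^{-1}\nabla f(x_k)$. Denoting $G \eqdef \|\nabla f(x_k)\|_{\B_k}^*$, the identities $\la \nabla f(x_k), x_{k+1} - x_k\ra = -\eta_k G^2$ and $\|x_{k+1}-x_k\|_{\B_k} = \eta_k G$ yield
\begin{equation*}
    f(x_{k+1}) \leq f(x_k) - \eta_k G^2 + \tfrac{\theta}{2}\eta_k^2 G^2 + \tfrac{L}{6}\eta_k^3 G^3.
\end{equation*}

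Third, invoke the defining equation for the stepsize derived in the paper, namely $\theta \eta_k + L\eta_k^2 G = 1$ (equivalently, $\eta_k = (\theta + L\|h_k\|_{\B_k})^{-1}$). This gives $\theta \eta_k^2 G^2 = \eta_k G^2 - L\eta_k^3 G^3$, and substitution produces
\begin{equation*}
    f(x_{k+1}) \leq f(x_k) - \eta_k G^2 + \tfrac{1}{2}\eta_k G^2 - \tfrac{L}{2}\eta_k^3 G^3 + \tfrac{L}{6}\eta_k^3 G^3 = f(x_k) - \tfrac{1}{2}\eta_k G^2 - \tfrac{L}{3}\eta_k^3 G^3,
\end{equation*}
which is in fact slightly stronger than the stated bound (since $\tfrac{L}{3} \geq \tfrac{L}{6}$).

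The proof is essentially bookkeeping: there is no hard estimate, only the correct substitution of the stepsize identity. The main thing to watch is that the parameter conditions are used precisely to convert the inexact-Hessian correction $\tfrac{\oalph}{2}\|\cdot\|^2 + \tfrac{(1+\oalph)^{3/2}\Lsemi}{6}\|\cdot\|^3$ into $\tfrac{\theta - 1}{2}\|\cdot\|^2 + \tfrac{L}{6}\|\cdot\|^3$, and that the resulting quadratic term combines with the model's $\tfrac12\|\cdot\|_{\B_k}^2$ into $\tfrac{\theta}{2}\|\cdot\|_{\B_k}^2$ before the algebraic simplification via $\theta\eta_k + L\eta_k^2 G = 1$.
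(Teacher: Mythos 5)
Your proof is correct and takes essentially the same route as the paper's: both arrive at the intermediate bound $f(x_{k+1}) \leq f(x_k) + \langle\nabla f(x_k), h_k\rangle + \tfrac{\theta}{2}\|h_k\|^2_{\B_k} + \tfrac{L}{6}\|h_k\|^3_{\B_k}$ (you via Lemma~\ref{lem:app_model_bounds}, the paper directly from semi-strong self-concordance plus the inexactness assumption) and then cancel the quadratic term using stationarity of the CEQN model, where your scalar identity $\theta\eta_k + L\eta_k^2\|\nabla f(x_k)\|^{*}_{\B_k}=1$ is just the paper's vector optimality condition contracted with $\tfrac12 h_k^{\top}$. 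Your marginally stronger cubic coefficient ($L/3$ versus $L/6$) traces to an inconsistent constant in the paper's own appendix statement of the optimality condition, not to a gap in your argument.
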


\begin{proof}
By Lemma~\ref{le:model} for any $x,~y \in \R^d$

\begin{equation}
    |f(y) - f(x) - \la\nabla f(x), y - x\ra - \tfrac{1}{2} \la \nabla^2 f(x)(y - x), y - x\ra| \leq \tfrac{\Lsemi}{6}\|y-x\|_x^3
\end{equation}

 Substituting $y = x_k, ~x = x_{k+1}$:
\begin{equation}
\label{eq:second_step_tmp}
f(x_k) - f(x_{k+1}) \geq \langle \nabla f(x_k), x_k - x_{k+1} \rangle - \tfrac{1}{2} \la \nabla^2 f(x_k)(x_{k+1} - x_k), x_{k+1} - x_k\ra-  \frac{L_{\text{semi}}}{6} \|x_{k+1} - x_k\|_{x_k}^3.
\end{equation}

From optimality condition of the cubic step
\[
0 = \nabla f(x_k) + \theta B_k (x_{k+1} - x_k) + \frac{2L}{3} \|x_{k+1} - x_k\|_{\B_k} \B_k(x_{k+1} - x_k).
\]

 Multiplying optimality condition by $\frac{1}{2}(x_{k+1} - x_k)^\top$:
\begin{equation}
    \label{eq:opt}
    0 = \frac{1}{2} \langle \nabla f(x_k), x_{k+1} - x_k \rangle
    + \frac{\theta}{2} \|x_{k+1} - x_k\|_{\B_k}^2
    + \frac{L}{3} \|x_{k+1} - x_k\|_{\B_k}^3 
\end{equation}

%  Multiplying optimality condition by $(x_{k+1} - x_k)^\top$:
% \begin{equation}
%     \label{eq:opt}
%     0 =  \langle \nabla f(x_k), x_{k+1} - x_k \rangle
%     + \theta \|x_{k+1} - x_k\|_{\B_k}^2
%     + \frac{L}{3} \|x_{k+1} - x_k\|_{\B_k}^3 
% \end{equation}

By Assumption~\ref{as:app_inexactness} and our parameters choice, we have
\begin{gather*}
    - \frac{1}{2}\|x_{k+1} - x_k\|^2_{x_k} \geq -\frac{1 + \oalph}{2}\|x_{k+1} - x_k\|^2_{\B_k} \geq -\frac{\theta}{2}\|x_{k+1} - x_k\|^2_{\B_k}\\
    -\frac{\Lsemi}{6}\|x_{k+1} - x_k\|^3_{x_k} \geq - \frac{(1+\oalph)^{3/2}\Lsemi}{6}\|x_{k+1} - x_k\|^3_{\B_k} \geq - \frac{L}{6}\|x_{k+1} - x_k\|^3_{\B_k}
\end{gather*}
Combining~\eqref{eq:second_step_tmp} with previous inequalities, we get  
\begin{equation*}
    f(x_k) - f(x_{k+1}) \geq \langle \nabla f(x_k), x_k - x_{k+1} \rangle
- \frac{\theta}{2} \|x_{k+1} - x_k\|_{B_k}^2
- \frac{L}{6} \|x_{k+1} - x_k\|_{B_k}^3.
\end{equation*}

By adding~\eqref{eq:opt}, we have
\begin{equation*}
    f(x_k) - f(x_{k+1}) \geq \frac{1}{2} \langle \nabla f(x_k),  x_k - x_{k+1} \rangle
+ \left(\frac{L}{3} - \frac{L}{6} \right) \|x_{k+1} - x_k\|_{B_k}^3.
\end{equation*}

Plugging in the update rule $x_{k+1} = x_k - \eta_k H_k \nabla f(x_k)$, we get
\begin{equation*}
    f(x_k) - f(x_{k+1}) \geq \frac{1}{2} \eta_k \lr\|\nabla f(x_k)\|_{B_k}^*\rr^2
    + \frac{L}{6} \eta_k^3 \lr\|\nabla f(x_k)\|_{B_k}^*\rr^3.
\end{equation*}

Rearranging, 
\begin{equation}
    f(x_{k+1}) \leq f(x_k)
    - \frac{1}{2} \eta_k \lr\|\nabla f(x_k)\|_{B_k}^*\rr^2
    - \frac{L}{6} \eta_k^3 \lr\|\nabla f(x_k)\|_{B_k}^*\rr^3.
\end{equation}

\end{proof}

\section{Proof without Semi-Strong Self-Concordance}
In this section, we provide a more direct alternative analysis of the CEQN algorithm under the assumption that the CEQN model upper bounds the objective function.
\begin{assumption} \label{as:model_ub}
 For the function $f: \R^d \to \R$ and the preconditioner schedule $\B_k$, there exist constants $\cq, L$ are such that  and all $x,y \in \R^d$ holds
\begin{equation} \label{eq:model_b}
    f(y) \leq f(x_k) +\la \g(x_k), y-x_k \ra + \frac \cq 2 \normsM{y-x_k} {\B_k} + \frac L 3 \normM{y-x_k} {\B_k}^3.
\end{equation}
\end{assumption}
This assumption can be satisfied under various conditions, or in particular:
\begin{itemize}
    \item For $L_{semi}$-semi-strong self-concordant functions \citep{hanzely2022damped} and $\B_k= \h(x_k)$ it holds with $\cq=1$ and $L=L_{semi}$.
    \item For $L_{semi}$-semi-strong self-concordant functions \citep{hanzely2022damped} and $\B_k$ approximating Hessian as $(1-\underline \alpha) \B_k \preceq \h(x_k) \preceq (1+\overline \alpha) \B_k$ it holds with $\cq=\frac 1{1+ \overline\alpha}$ and $L=L_{semi} \cq^{3/2}$.

    % This assumption is relatively standard for analysis of quasi-Newton methods.
    Notably, this assumption that $\B_k$ approximates Hessian with relative precision is standard in the analysis of quasi-Newton methods. For (standard) self-concordant function $f$, it can be satisfied if $\B_k$ is chosen as Hessian at point from the neighborhood of $x_k$.
    % \item For $L$-smooth and $\mu$-strongly convex functions it holds with 
\end{itemize}

Plugging the minimizer into the upper bound leads to the following one-step decrease.
\begin{lemma} \label{le:one_step}
    Quasi-Newton method with CEQN stepsize decreases functional value as
    \begin{align}
        f(x_{k+1})-f(x_k) 
        &\leq - \frac {\st_k \lr 4 - \st_k \cq \rr}6 \normsMd{\g(x_k)} {\B_k}\\
        &\leq - \frac {\st_k} 2 \normsMd{\g(x_k)} {\B_k}\\
        % = - \frac {1} {2} \normsM{x_{k+1}-x_k} {\B_k}\\
        &\leq -\frac{\normsMd{\g(x_k)} {\B_k}}{2 \max \lr 2\cq, \sqrt{2 L \cq \normMd{\g(x_k)} {\B_k}} \rr)}\\
        &= \begin{cases}
        - \frac 1{4\cq} \normsMd{\g(x_k)} {\B_k} & \text{if } \normMd{\g(x_k)} {\B_k} \leq \frac {2 \cq} L\\
        - \frac 1{\sqrt{8 L \cq}} \normM{\g(x_k)} {\B_k} ^{*\frac 32} & \text{if } \normMd{\g(x_k)} {\B_k} \geq \frac {2 \cq} L
        \end{cases}.
    \end{align}
\end{lemma}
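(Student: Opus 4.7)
My plan is to combine the model upper bound from Assumption~\ref{as:model_ub} with the closed-form optimality condition derived around \eqref{eq:step_quasi}. The starting point is that $x_{k+1}$ is, by construction, the minimizer of exactly the cubic model that sits on the right-hand side of \eqref{eq:model_b} (with $\B_k$, $\cq$, $L$ taken from the assumption). So all three inequalities reduce to substituting the CEQN step into this model and then simplifying via the equation that $\eta_k$ satisfies.

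First, I would plug $y = x_{k+1}$ and $x_{k+1}-x_k = -\st_k \Bi_k \g(x_k)$ into \eqref{eq:model_b}. Using $\la \g(x_k), x_{k+1}-x_k\ra = -\st_k\normsMd{\g(x_k)}{\B_k}$ and $\normM{x_{k+1}-x_k}{\B_k} = \st_k\normMd{\g(x_k)}{\B_k}$, the bound becomes a cubic polynomial in $\st_k$ times $\normsMd{\g(x_k)}{\B_k}$. The key identity is the one obtained from the first-order optimality condition of the CEQN subproblem, rewritten in scalar form,
\[
 \cq \st_k + L \st_k^{2}\normMd{\g(x_k)}{\B_k} \;=\; 1.
\]
I would use this to eliminate the cubic term via $\tfrac{L}{3}\st_k^{3}(\normMd{\g(x_k)}{\B_k})^{3} = \tfrac{\st_k(1-\cq\st_k)}{3}\normsMd{\g(x_k)}{\B_k}$. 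Collecting the resulting coefficients $-\st_k + \tfrac{\cq\st_k^2}{2} + \tfrac{\st_k(1-\cq\st_k)}{3}$ immediately yields the first inequality, with coefficient $-\tfrac{\st_k(4-\st_k\cq)}{6}$.

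The second inequality is then a one-line consequence of the same identity: since $L\st_k^2\normMd{\g(x_k)}{\B_k}\geq 0$, we have $\cq\st_k\leq 1$, so $4-\st_k\cq\geq 3$ and $\tfrac{\st_k(4-\st_k\cq)}{6}\geq \tfrac{\st_k}{2}$. For the third inequality I need a lower bound on $\st_k$ itself, which I obtain by solving the same quadratic, giving $\st_k = \tfrac{2}{\cq+\sqrt{\cq^{2}+4L\normMd{\g(x_k)}{\B_k}}}$, and then bounding $\cq+\sqrt{\cq^2+4L\normMd{\g(x_k)}{\B_k}}$ by (a constant multiple of) $\max(2\cq,\sqrt{2L\cq\normMd{\g(x_k)}{\B_k}})$. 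The fourth line is just a case split of this maximum at the crossover $\normMd{\g(x_k)}{\B_k} = 2\cq/L$, which is exactly the point where the two candidate arguments of the $\max$ coincide.

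The main obstacle will be step three: keeping the constants in the $\max(\cdot,\cdot)$ bound clean. One has to compare $\sqrt{\cq^{2}+4L\normMd{\g(x_k)}{\B_k}}$ against $\max(2\cq,\sqrt{2L\cq\normMd{\g(x_k)}{\B_k}})$ in both regimes and verify that the two asymptotic branches ($-\tfrac{1}{4\cq}\normsMd{\g(x_k)}{\B_k}$ and $-\tfrac{1}{\sqrt{8L\cq}}\normM{\g(x_k)}{\B_k}^{*3/2}$) glue continuously at the transition point. Everything else is the same algebraic bookkeeping already performed between \eqref{eq:step_model} and \eqref{eq:step_quasi}, so aside from the constant-chasing in the final bound, no new ingredient is needed.
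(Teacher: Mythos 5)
Your proposal is correct and follows essentially the same route as the paper's proof: substitute the CEQN step into the model upper bound of Assumption~\ref{as:model_ub}, use the scalar stationarity identity $1-\cq\st_k = L\st_k^2\normMd{\g(x_k)}{\B_k}$ to eliminate the cubic term and obtain the coefficient $-\tfrac{\st_k(4-\st_k\cq)}{6}$, deduce the second line from $\cq\st_k\le 1$, and get the last two lines by lower-bounding the explicit stepsize. You are in fact more explicit than the paper about the remaining constant-chasing in the third inequality, which the paper dismisses as ``basic manipulation of the stepsizes.''
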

\begin{proof}
    First inequality is equivalent to follows from model upperbound,
    \begin{align}
        f(x_{k+1}) - f(x) 
        &\leq \la \g(x_k), x_{k+1}-x_k \ra + \frac \cq 2 \normsM{x_{k+1}-x_k} {\B_{x_k}} + \frac L 3 \normM{x_{k+1}-x_k} {\B_{x_k}}^3\\
        &= -\st_k \normsMd{\g(x_k)} {\B_{x_k}} + \frac \cq 2 \st_k^2 \normsMd{\g(x_k)} {\B_{x_k}} + \frac L 3 \st_k^3 \normM{\g(x_k)} {\B_{x_k}}^{*3}\\
        &= \st_k \normsMd{\g(x_k)} {\B_{x_k}} \lr -1 + \frac \cq 2 \st_k + \frac L 3 \st_k^2 \normMd{\g(x_k)} {\B_{x_k}} \rr,\\
        \intertext{with the choice of stepisize satisfying $1 - \cq  \st_k = L \st_k^2 \normMd{\g(x_k)} {\B_k}$}
        &= \st_k \normsMd{\g(x_k)} {\B_{x_k}} \lr -1 + \frac \cq 2 \st_k + \frac {1-\cq \st_k} 3  \rr\\
        &= \st_k \normsMd{\g(x_k)} {\B_{x_k}} \lr -\frac 23  +\frac 16 \cq \st_k\rr.
    \end{align}
    The second inequality in the lemma follows from the fact that $\cq \st_k \in (0,1]$, and therefore $\cq\st_k \leq 1$. The third inequality in the lemma follows from the basic manipulation of the stepsizes $\st_k$.
\end{proof}

Therefore, functional value decreases monotonically. As long as the gradient exponent is $3/2$, this implies $\mathcal O(1/k^2)$ convergence.
\begin{theorem}
    For the convex function $f: \R^d \to \R$ satisfying bounded level set assumption of the form $R \eqdef \max_{k\in[0, \dots K]}\normM{x_k-\xopt}{\B_k} <\infty$, and the Quasi-Newton preconditioner schedule $\B_k$ satisfying Assumption \ref{as:model_ub}, the CEQN method converges globally to point a $x_k$ such that $\normMd{\g(x_k)} {\B_k} \leq \frac {2 \cq} L$ with the rate $\cO(k^{-2}).$
\end{theorem}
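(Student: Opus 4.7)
The plan is to chain together the one-step decrease from Lemma~\ref{le:one_step} with a convexity-based lower bound on the gradient, and then apply a standard sequence-convergence trick (as used in the adaptive case) to extract an $\cO(k^{-2})$ rate valid while $\normMd{\g(x_k)}{\B_k} \geq \tfrac{2\cq}{L}$.

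First I would invoke Lemma~\ref{le:one_step} in the ``large gradient'' regime, where the one-step decrease takes the form
\begin{equation*}
    f(x_k) - f(x_{k+1}) \geq \tfrac{1}{\sqrt{8L\cq}} \normMd{\g(x_k)}{\B_k}^{3/2}.
\end{equation*}
Next, by convexity of $f$ and the Cauchy--Schwarz inequality in the $\B_k$-norm, I get
\begin{equation*}
    f(x_k) - f(\xopt) \leq \la \g(x_k), x_k - \xopt\ra \leq \normMd{\g(x_k)}{\B_k} \cdot \normM{x_k - \xopt}{\B_k} \leq R\, \normMd{\g(x_k)}{\B_k},
\end{equation*}
which rearranges to $\normMd{\g(x_k)}{\B_k} \geq (f(x_k) - f(\xopt))/R$. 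Substituting this lower bound into the one-step decrease yields a self-referential inequality on the residual $\delta_k \eqdef f(x_k) - f(\xopt)$:
\begin{equation*}
    \delta_k - \delta_{k+1} \geq \tfrac{1}{\sqrt{8L\cq}\, R^{3/2}} \delta_k^{3/2}.
\end{equation*}

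I would then rescale via $\xi_k \eqdef \delta_k / C$ with $C \eqdef 8 L \cq R^3$ chosen so the recursion becomes $\xi_k - \xi_{k+1} \geq \xi_k^{3/2}$ (or equivalently $\xi_{k+1}^{3/2}$ after a standard monotonicity argument, since Lemma~\ref{le:one_step} already ensures $\{\delta_k\}$ is non-increasing). Then I would apply the sequence lemma used earlier in the paper (see the reference to \citep{nesterov2019inexact}[Lemma A.1] in the proof of the adaptive theorem) with $\beta = 1/2$, which gives $\xi_k = \cO(1/k^2)$ and hence $\delta_k = \cO(L \cq R^3 / k^2)$.

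The main obstacle is handling the transition between the two regimes cleanly. Since the conclusion only claims convergence until the method reaches the region $\normMd{\g(x_k)}{\B_k} \leq 2\cq/L$, as long as we remain outside that region the cubic branch of Lemma~\ref{le:one_step} applies and the above argument runs unchanged; monotonicity of $\delta_k$ ensures the residual bound remains valid at the first iterate entering the small-gradient region. A mild technical point is ensuring $R$ is well-defined over the relevant iterates, which is given by hypothesis, and checking that no affine-invariance concerns arise since everything is phrased directly in $\B_k$-norms. No additional assumptions beyond Assumption~\ref{as:model_ub}, convexity, and the bounded iterates are required.
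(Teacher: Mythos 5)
Your proposal is correct and follows essentially the same route as the paper's proof: convexity plus Cauchy--Schwarz to get $f(x_k)-\fopt \leq R\,\normMd{\g(x_k)}{\B_k}$, substitution into the cubic branch of Lemma~\ref{le:one_step}, the rescaling $\xi_k = (f(x_k)-\fopt)/(8L\cq R^3)$ (the paper's $\beta_k = \tau^2(f(x_k)-\fopt)$ with $\tau = 1/\sqrt{8L\cq R^3}$ is identical), and the sequence lemma of \citet{nesterov2019inexact} to conclude $\cO(k^{-2})$. Your handling of the regime transition via monotonicity of the residual matches the paper's (implicit) treatment as well.
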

\begin{proof}
    The proof is analogical to Theorem 4 of \citet{hanzely2022damped}.
    % \href{https://proceedings.neurips.cc/paper_files/paper/2022/file/a1f0c0cd6caaa4863af5f12608edf63e-Supplemental-Conference.pdf}{(link)}.
    % From \Cref{le:one_step}
    % \begin{align}
    %     f(x_{k+1})-f(x_k) 
    %     &\leq -\frac{\normsMd{\g(x_k)} {\B_k}}{2 \max \lr 2\cq, \sqrt{2 L \cq \normMd{\g(x_k)} {\B_k}} \rr}\\
    %     &= \begin{cases}
    %     - \frac 1{4\cq} \normsMd{\g(x_k)} {\B_k} & \text{if } \normMd{\g(x_k)} {\B_k} \leq \frac {2 \cq} L\\
    %     - \frac 1{\sqrt{8 L \cq}} \normM{\g(x_k)} {\B_k} ^{*\frac 32} & \text{if } \normMd{\g(x_k)} {\B_k} \geq \frac {2 \cq} L
    %     \end{cases} 
    % \end{align}

    From convexity and Cauchy-Schwarthz inequality, 
    %and $D \eqdef \sup_{x \in Q, \B \text{ admissible}}\normM {x - \xopt}{\B} <0$
    \begin{align}
        f(x_k)-\fopt
        \leq \la \g(x_k), x_k - \xopt \ra
        \leq \normMd{\g(x_k)}{\B_k} \normM {x_k - \xopt}{\B_k}
        \leq R \normMd{\g(x_k)}{\B_k}.
    \end{align}
    Plugging that to Lemma \ref{le:one_step}
    \begin{align}
        f(x_{k+1})-f(x_k)
        \leq - \frac 1{\sqrt{8 L \cq}} \normM{\g(x_k)} {\B_k} ^{*\frac 32}
        \leq - \frac 1{\sqrt{8 L \cq R^3}}  \lr f(x_k) -\fopt \rr^{\frac 32}
        = - \tau  \lr f(x_k) -\fopt \rr^{\frac 32}
    \end{align}
    for $\tau\eqdef \frac 1{\sqrt{8 L \cq R^3}}$. Denote $\beta_k \eqdef \tau^2 \lr f(x_k) -\fopt \rr \geq 0$ satisfying recurrence
    \begin{align}
        \beta_{k+1} 
        = \tau^2 \lr f(x_{k+1}) -\fopt \rr 
        \leq \tau^2 \lr f(x_{k}) -\fopt \rr - \tau^3 \lr f(x_{k}) -\fopt \rr^{3/2} 
        =\beta_k - \beta_k^{3/2}.
    \end{align}
    Because $\beta_{k+1}\geq 0,$ we have $\beta_k\leq 1$. \cite{nesterov2019inexact}[Lemma A.1] shows that the sequence $\lc \beta_k \rc _{k=0} ^\infty$ for $0\leq \beta_k \leq 1$ decreases as $\mathcal O (k^{-2})$, so denote $c$ constant satisfying $\beta_k \leq c k^{-2}$ for all $k$ (\citet{mishchenko2023regularized}[Proposition] claims that $c \approx 3$ is sufficient), then for $k$ at least
    \begin{align}
        k
        \geq \sqrt {\frac c{\tau^2 \varepsilon}}
        = \sqrt{\frac {c 8 L \cq R^3} \varepsilon}
        = \mathcal O \lr \sqrt {\frac {L \cq R^3}{\varepsilon}} \rr
    \end{align}
    we have 
    \begin{align}
        f(x_k) -\fopt = \frac {\beta_k}{\tau^2} \leq \frac c{k^2\tau^2} \leq \varepsilon.
    \end{align}
\end{proof}

\section{Experiments}

Our code is available at \url{https://anonymous.4open.science/r/ceqn-stepsizes/}.

\subsection{Experiment Details}

For the $L$ parameter across all methods on \texttt{a9a} dataset, we use a logarithmically spaced grid:
\begin{align*}
    L \in \Big\{10^{-5},& 3.16 \times 10^{-5}, 10^{-4}, 3.16 \times 10^{-4}, 10^{-3}, 3.16 \times 10^{-3}, 10^{-2}, \\
    & 3.16 \times 10^{-2}, 10^{-1}, 3.16 \times 10^{-1}, 1, 3.16, 10, 3.16 \times 10, 10^2, 3.16 \times 10^{2}, 10^{3} \Big\} .
\end{align*}
For the $\delta$ parameter of non-adaptive Cubic Regularized Quasi-Newton (CRQN) and the $\alpha$ parameter of non-adaptive CEQN, we extend this grid to also include the value $0$.  For the \texttt{a9a} dataset, CEQN LSR1 used $L = 10^2$ and $\delta = 3.16 \times 10$, Adaptive CEQN reg LSR1 and dual LSR1 both used $L = 10^{-1}$, LSR1 used $L = 3.16 \times 10$, Cubic QN LSR1 used $L = 3.16 \times 10^{-5}$ and $\delta = 1$, while Adaptive Cubic QN LSR1 used $L = 3.16 \times 10^{-5}$.

For the \texttt{real-sim} dataset, we use a denser, smaller logarithmic grid:
\begin{align*}
    L \in \Big\{
    &10^{-5},\ 2.82 \times 10^{-5},\ 7.95 \times 10^{-5},\ 2.24 \times 10^{-4},\ 6.31 \times 10^{-4},\ 1.78 \times 10^{-3}, \\
    &5.02 \times 10^{-3},\ 1.41 \times 10^{-2},\ 3.99 \times 10^{-2},\ 1.12 \times 10^{-1},\ 3.17 \times 10^{-1}, \\
    &8.93 \times 10^{-1},\ 2.52,\ 7.10,\ 20.0
    \Big\} .
\end{align*}
The best-performing $L$ values were: Adaptive CEQN reg LSR1 used $L = 1.12 \times 10^{-1}$, Adaptive CEQN dual LSR1 used $L = 8.93 \times 10^{-1}$, LSR1 used $L = 7.10$, Cubic QN LSR1 used $L = 7.95 \times 10^{-5}$, and Adaptive Cubic QN LSR1 used $L = 7.95  \times 10^{-5}$. 

\subsection{Additional Experiments}
In this section, we conduct all experiments on logistic regression with regularization parameter $\mu = 10^{-4}$, using the \texttt{a9a} dataset and a memory size of $m = 10$. Optimal parameters for methods in this section presented in Table~\ref{tab:qn-l-values}.

\begin{table}
\centering
\begin{tabular}{lcccc}
\toprule
 & Adaptive CEQN reg & Adaptive CEQN dual & Classic QN & Adaptive Cubic QN\\
\midrule
L-BFGS         & $3.16$     & $3.16 \times 10^{-1}$     & $3.16$     & $3.16 \times 10^{-5}$ \\
L-BFGS history & $1$        & $1$        & $3.16 \times 10^{2}$   & $10^{-3}$ \\
SR1            & $10^{-1}$      & $10^{-1}$      & $3.16 \times 10$    & $3.16 \times 10^{-5}$ \\
SR1 history    & $1$        & $3.16 \times 10$    & $10^3$     & $10^{-3}$ \\
\bottomrule
\end{tabular}
\caption{Optimal $L$ values across Hessian approximation strategies and Quasi-Newton methods.}
\label{tab:qn-l-values}
\end{table}

\subsubsection{LBFGS Update}
In this set of experiments, we approximate the inverse Hessian $\Bi_k \approx \nabla^2 f(x_k)^{-1}$ using the limited-memory BFGS (L-BFGS) method. The approximation is based on a history of $m$ curvature pairs $(s_i, y_i)$ collected during the past optimization steps, where $s_i = x_{i+1} - x_i$ and $y_i = \nabla f(x_{i+1}) - \nabla f(x_i)$ or by sampling random directions $d_i \sim \mathcal{N}(0, I)$ and computing $s_i = d_i, y_i = \nabla^2 f(x_k) d_i$ via Hessian-vector product. These pairs are reused to construct an implicit representation of $\Bi_k$ without forming it explicitly.

We compute the product $\Bi_k \nabla f(x_k)$ using the classical two-loop recursion:
\begin{algorithmic}[1]
\STATE \textbf{Input:} Gradient $g_k = \nabla f(x_k)$, memory $\{(s_i, y_i)\}_{i=1}^m$
\STATE Initialize $q \gets g_k$
\FOR{$i = m$ \TO $1$}
    \STATE $\rho_i \gets 1 / (y_i^\top s_i)$
    \STATE $\alpha_i \gets \rho_i \cdot s_i^\top q$
    \STATE $q \gets q - \alpha_i y_i$
\ENDFOR
\STATE Compute scalar $B_0 = \frac{y_m^\top y_m}{s_m^\top y_m}$
\STATE $r \gets q / B_0$
\FOR{$i = 1$ \TO $m$}
    \STATE $\beta \gets \rho_i \cdot y_i^\top r$
    \STATE $r \gets r + s_i (\alpha_i - \beta)$
\ENDFOR
\STATE \textbf{Return:} $r$
\end{algorithmic}

\subsubsection{Sampling vs History Curvature Pairs}

In this set of experiments, we compare two strategies for constructing curvature pairs $(s_i, y_i)$ used in quasi-Newton updates. The first approach is history-based, where pairs are collected along the optimization trajectory using
\begin{equation*}
    s_i = x_{i+1} - x_i, \quad y_i = \nabla f(x_{i+1}) - \nabla f(x_i).
\end{equation*}
The second approach is sampling-based, in which curvature pairs are generated independently of the trajectory by drawing random directions $d_i \sim \mathcal{N}(0, I)$ and computing
\begin{equation*}
    s_i = d_i, \quad y_i = \nabla^2 f(x_k) d_i
\end{equation*}
via Hessian-vector products evaluated at the current iterate $x_k$.

Results are presented in Figure~\ref{fig:bfgs} for methods using the L-BFGS update, and in Figure~\ref{fig:sr1} for those using the L-SR1 approximation.

\begin{figure}[H]
    \centering
    \begin{subfigure}[t]{0.32\textwidth}
        \includegraphics[width=\linewidth]{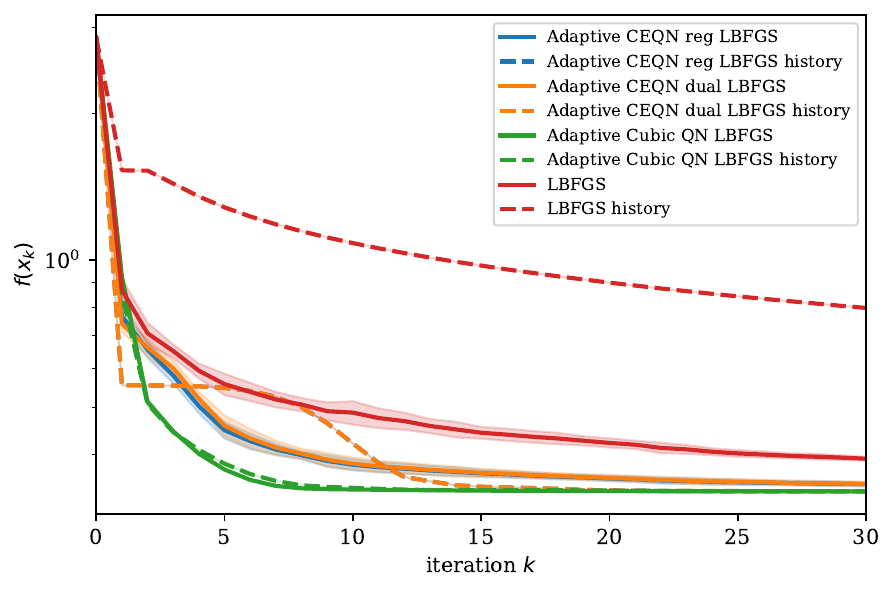}
    \end{subfigure}
    \hfill
    \begin{subfigure}[t]{0.32\textwidth}
        \includegraphics[width=\linewidth]{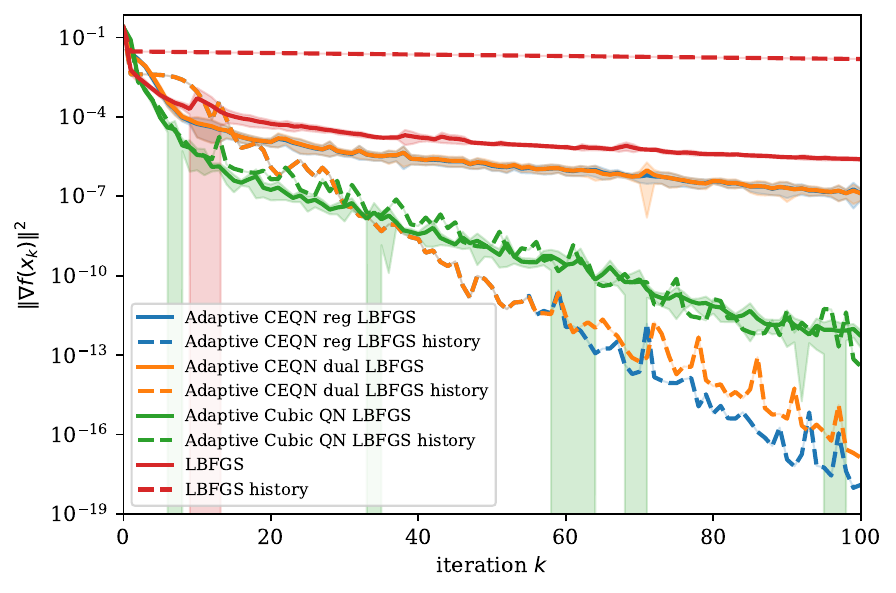}
    \end{subfigure}
    \hfill
    \begin{subfigure}[t]{0.32\textwidth}
        \includegraphics[width=\linewidth]{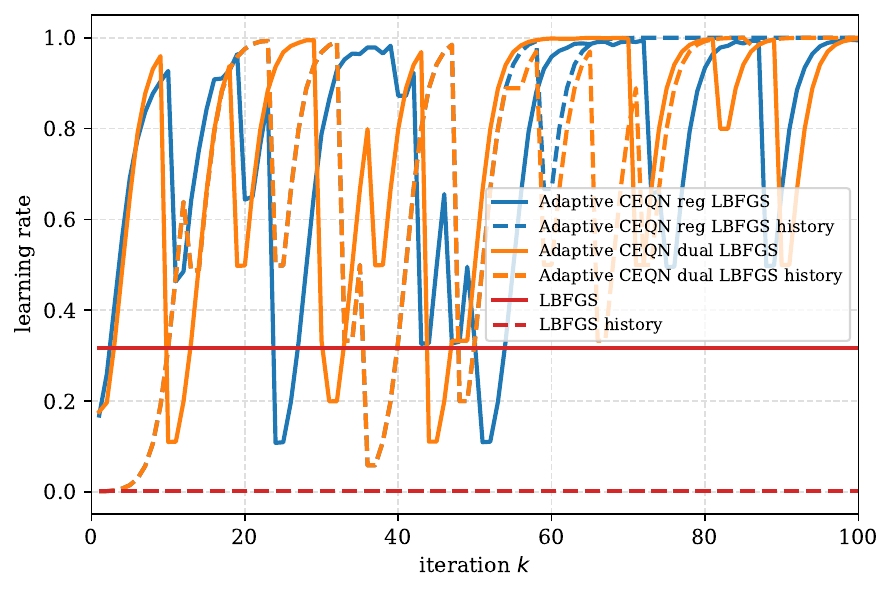}
    \end{subfigure}
    \caption{Comparison of different Quasi-Newton methods with BFGS updates.}
    \label{fig:bfgs}
\end{figure}

\begin{figure}[H]
    \centering
    \begin{subfigure}[t]{0.32\textwidth}
        \includegraphics[width=\linewidth]{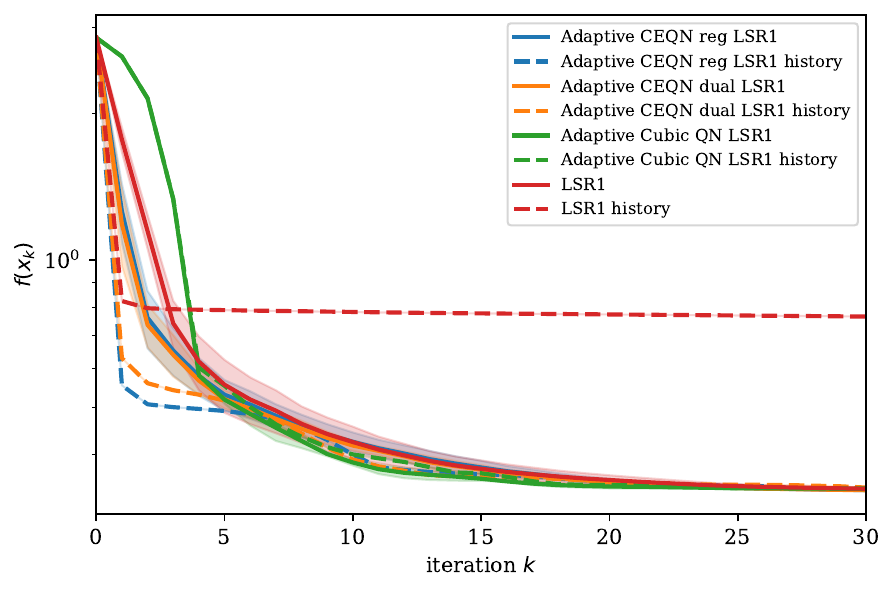}
    \end{subfigure}
    \hfill
    \begin{subfigure}[t]{0.32\textwidth}
        \includegraphics[width=\linewidth]{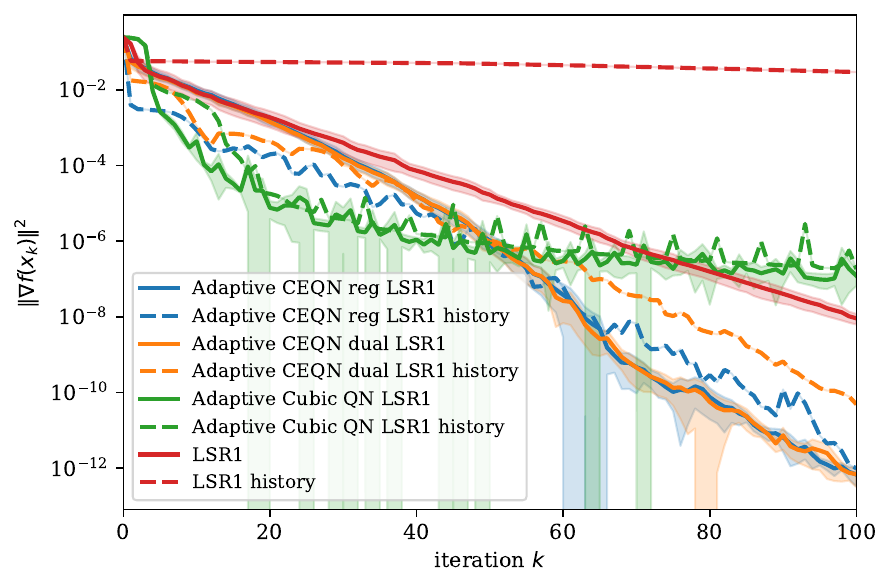}
    \end{subfigure}
    \hfill
    \begin{subfigure}[t]{0.32\textwidth}
        \includegraphics[width=\linewidth]{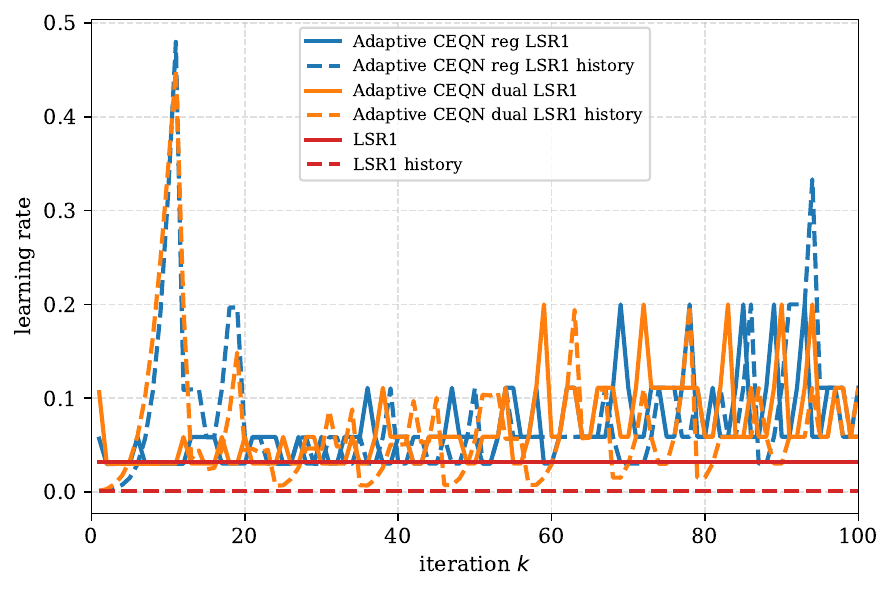}
    \end{subfigure}
    \caption{Comparison of different Quasi-Newton methods with SR1 updates.}
    \label{fig:sr1}
\end{figure}

\subsubsection{LSR1 vs LBFGS}
In this section, we present three experiments comparing the L-SR1 and L-BFGS update rules. Specifically, we compare the two approaches using history-based curvature pairs (Figure~\ref{fig:history}), sampled curvature pairs (Figure~\ref{fig:sample}), and the best-performing Quasi-Newton methods with CEQN stepsizes under each update (Figure~\ref{fig:best}).

\begin{figure}[H]
    \centering
    \begin{subfigure}[t]{0.32\textwidth}
        \includegraphics[width=\linewidth]{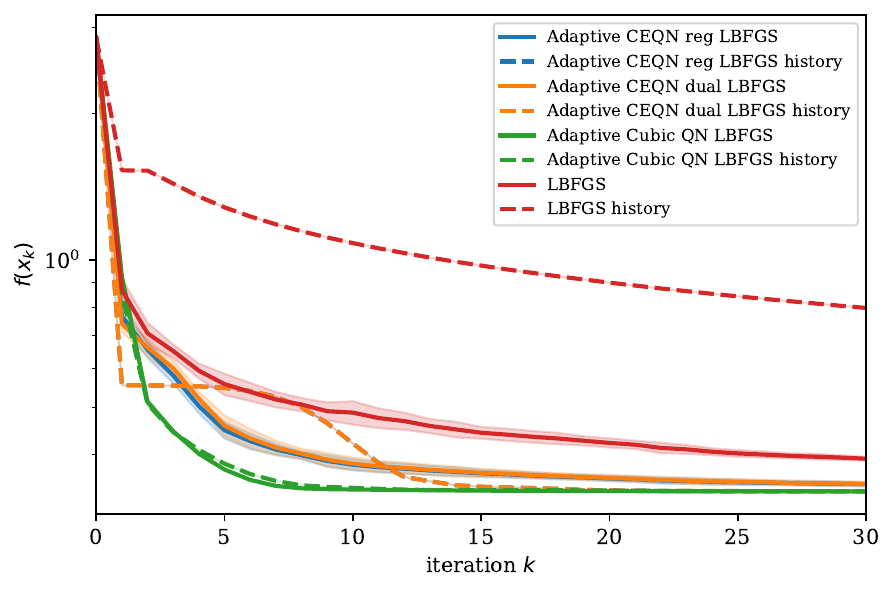}
    \end{subfigure}
    \hfill
    \begin{subfigure}[t]{0.32\textwidth}
        \includegraphics[width=\linewidth]{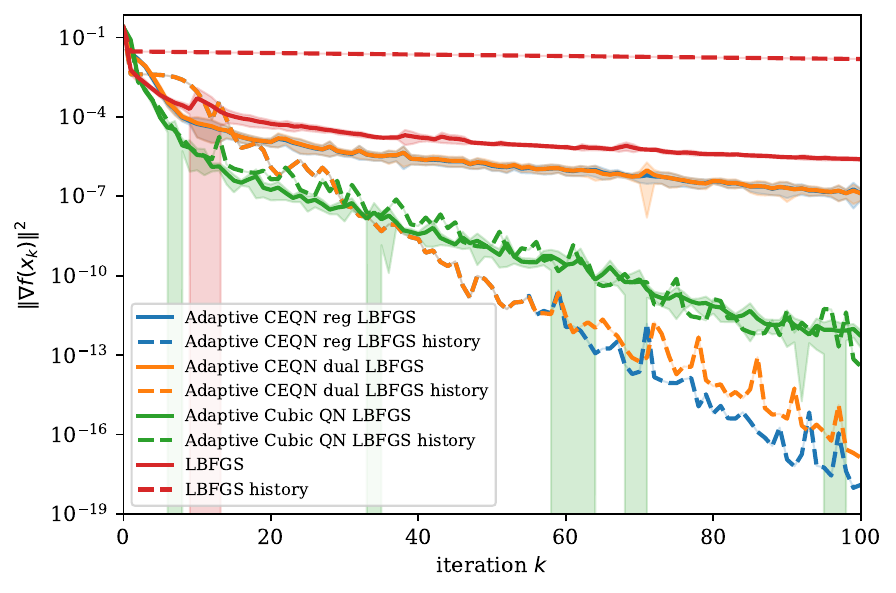}
    \end{subfigure}
    \hfill
    \begin{subfigure}[t]{0.32\textwidth}
        \includegraphics[width=\linewidth]{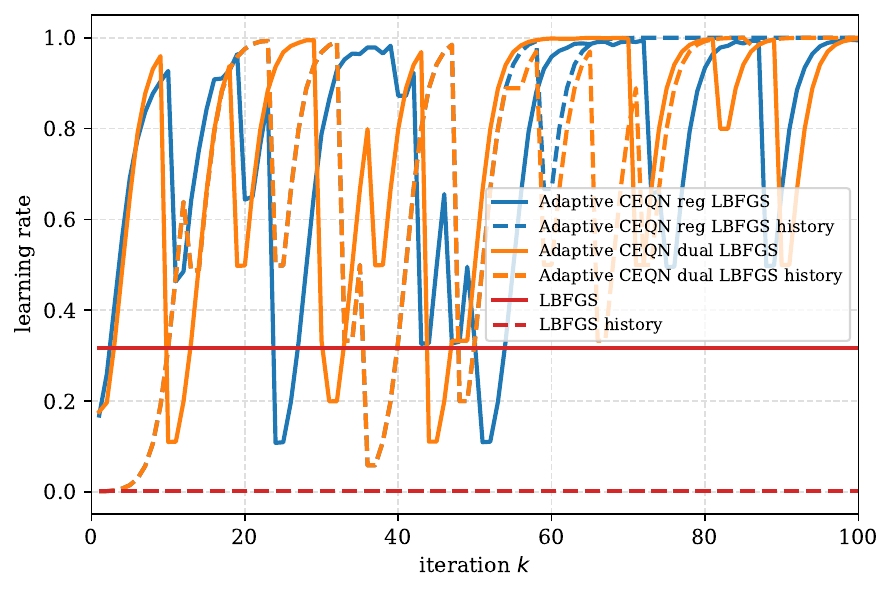}
    \end{subfigure}
    \caption{Comparison of LBFGS and LSR1 approximations across different Quasi-Newton methods using history-based curvature pairs.}
    \label{fig:history}
\end{figure}

\begin{figure}[H]
    \centering
    \begin{subfigure}[t]{0.32\textwidth}
        \includegraphics[width=\linewidth]{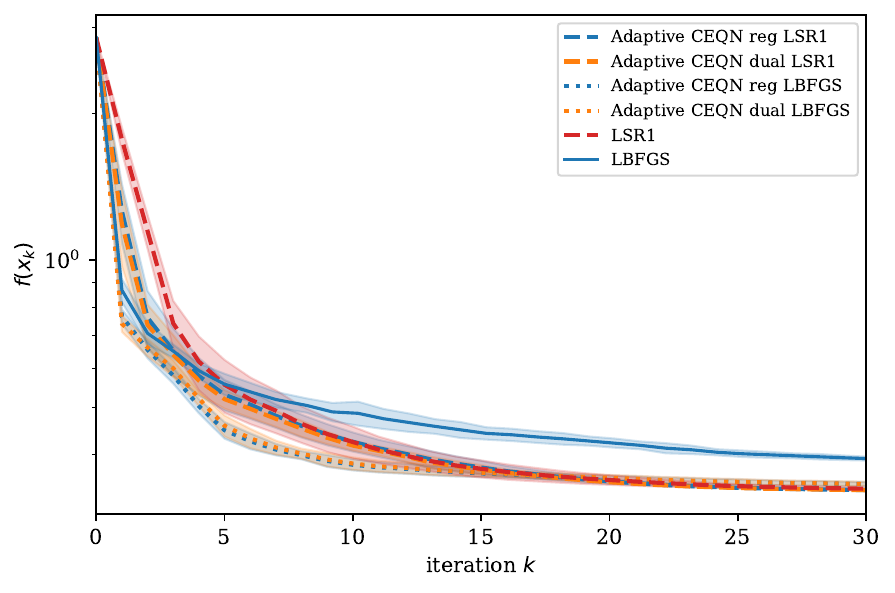}
    \end{subfigure}
    \hfill
    \begin{subfigure}[t]{0.32\textwidth}
        \includegraphics[width=\linewidth]{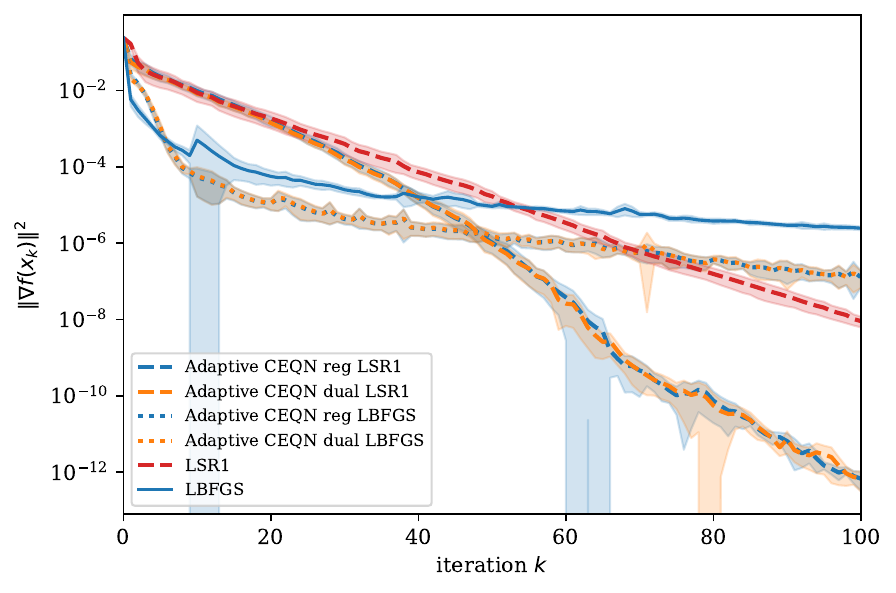}
    \end{subfigure}
    \hfill
    \begin{subfigure}[t]{0.32\textwidth}
        \includegraphics[width=\linewidth]{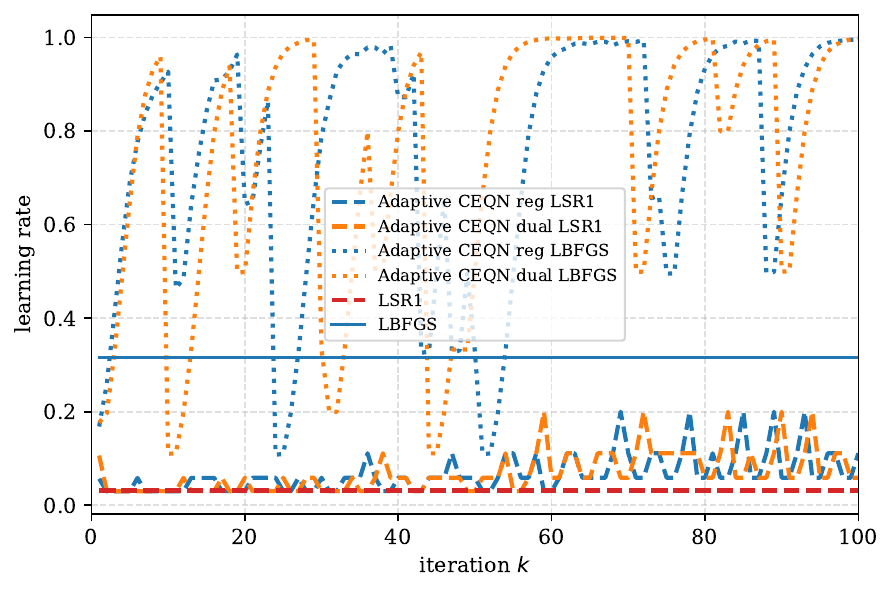}
    \end{subfigure}
    \caption{Comparison of LBFGS and LSR1 approximations across different Quasi-Newton methods using sampled curvature pairs.}
    \label{fig:sample}
\end{figure}

\begin{figure}[H]
    \centering
    \begin{subfigure}[t]{0.32\textwidth}
        \includegraphics[width=\linewidth]{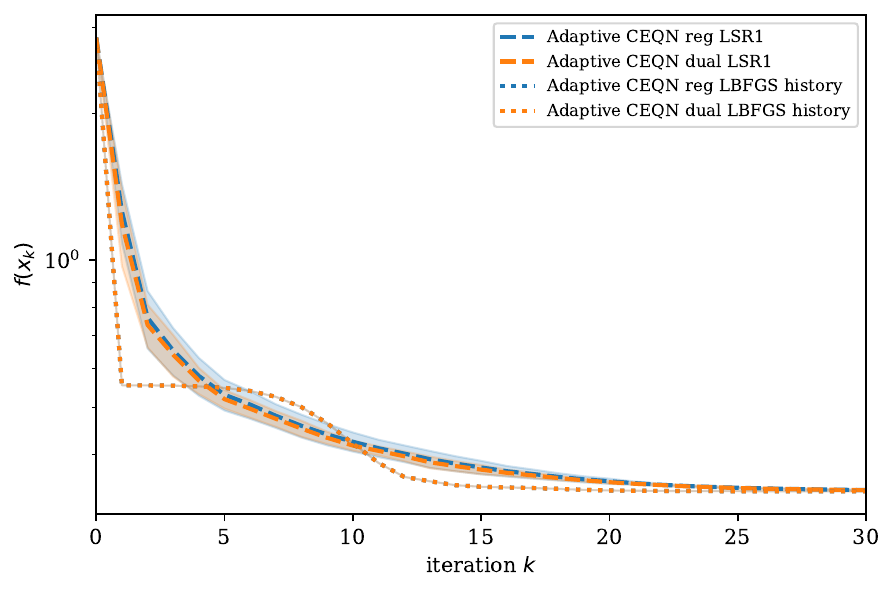}
    \end{subfigure}
    \hfill
    \begin{subfigure}[t]{0.32\textwidth}
        \includegraphics[width=\linewidth]{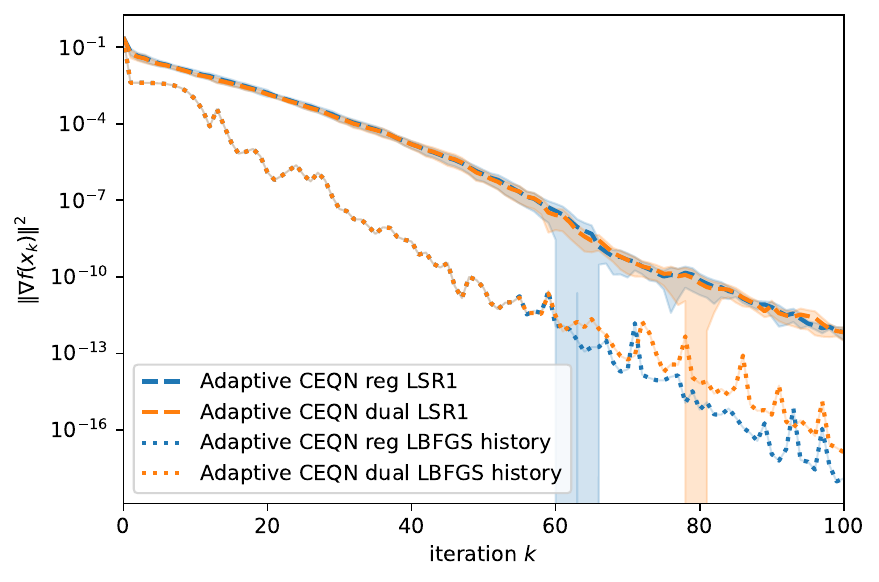}
    \end{subfigure}
    \hfill
    \begin{subfigure}[t]{0.32\textwidth}
        \includegraphics[width=\linewidth]{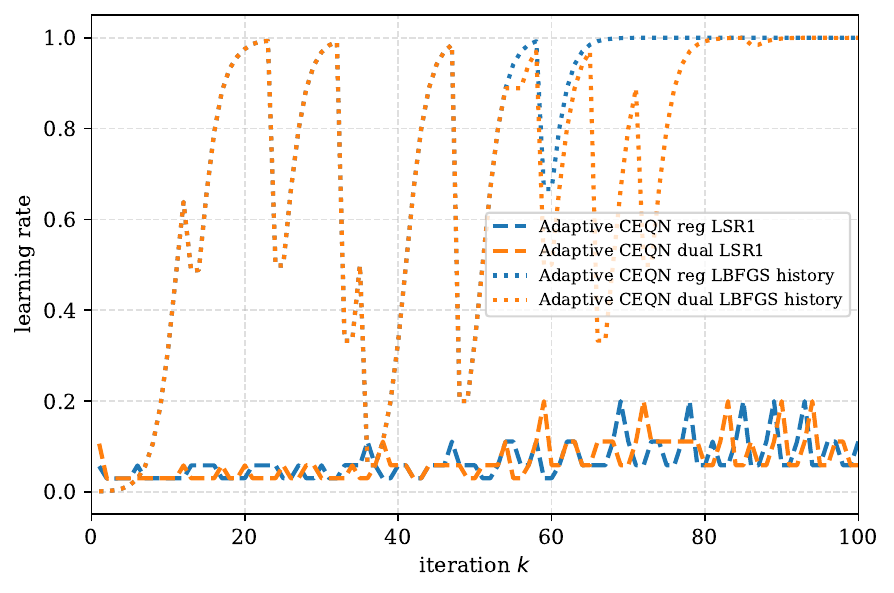}
    \end{subfigure}
    \caption{Comparison of the best-performing Quasi-Newton methods with adaptive CEQN stepsizes based on LBFGS and LSR1 approximations.}
    \label{fig:best}
\end{figure}

\end{document}